\newtheorem{theorem}{Theorem}[section]
\newtheorem{lemma}[theorem]{Lemma}
\newtheorem{proposition}[theorem]{Proposition}
\theoremstyle{definition}
\newtheorem{defn}[theorem]{Definition}
\newtheorem{remark}[theorem]{Remark}
\newtheoremstyle{break}
  {}
  {}
  {\itshape}
  {}
  {\bfseries}
  {.}
  {\newline}
  {}
\theoremstyle{break}
\newtheorem{theorembk}[theorem]{Theorem}
\newtheorem{propositionbk}[theorem]{Proposition}
\def\E{\mathbb{E}}
\def\Z{\mathbb{Z}}
\def\R{\mathbb{R}}
\def\T{\mathbb{T}}
\def\C{\mathbb{C}}
\def\N{\mathbb{N}}
\def\F{\mathbb{F}}
\def\Q{\mathbb{Q}}
\def\X{\mathcal{X}}
\def\Ghor{G^{\triangledown}}
\newcommand{\id}{\mathrm{id}}
\DeclareMathOperator{\hcf}{hcf}
\DeclareMathOperator{\lcm}{lcm}
\DeclareMathOperator{\poly}{poly}
\DeclareMathOperator{\Span}{Span}
\DeclareMathOperator{\Lip}{Lip}
\newcommand{\nil}{\mathrm{nil}}
\newcommand{\sml}{\mathrm{sml}}
\newcommand{\unf}{\mathrm{unf}}
\newcommand{\threeAP}{\mathrm{3AP}}
\DeclareMathOperator{\cN}{\mathcal{N}}
\newcommand{\Zmod}[1]{\Z_{#1}} 
\let\originalleft\left
\let\originalright\right
\renewcommand{\left}{\mathopen{}\mathclose\bgroup\originalleft}
\renewcommand{\right}{\aftergroup\egroup\originalright}
\providecommand{\abs}[1]{\left\lvert #1 \right\rvert}
\providecommand{\norm}[1]{\left\lVert #1 \right\rVert}
\providecommand{\floor}[1]{\left\lfloor#1\right\rfloor}
\providecommand{\Sol}{S}
\providecommand{\md}{d}
\providecommand{\lf}{\varphi}
\providecommand{\Lf}{\varPhi}
\renewcommand{\subset}{\subseteq}
\renewcommand{\supset}{\supseteq}
\title[Linear forms on cyclic groups and periodic nilsequences]{Convergence results for systems of linear forms on cyclic groups, and periodic nilsequences}
\date{}
\author{Pablo Candela}
\address{D\'epartement de math\'ematiques et applications\\
	\'Ecole normale sup\'erieure\\
	45 rue d'Ulm\\
	F 75230 Paris cedex 05\\
	France}
\email{pablo.candela@ens.fr}
\author{Olof Sisask}
\address{Department of Mathematics\\
	KTH\\
	SE-100 44  Stockholm\\
	Sweden}
\email{sisask@kth.se}
\subjclass[2010]{Primary 11B30; Secondary 11K70}
\keywords{Linear configurations, higher-order Fourier analysis, nilsequences}
\begin{document}

\begin{abstract}
Given a positive integer $N$ and real number $\alpha\in [0, 1]$, let $m(\alpha,N)$ denote the minimum, over all sets $A\subset \Zmod{N}$ of size at least $\alpha N$, of the normalized count of 3-term arithmetic progressions contained in $A$. A theorem of Croot states that $m(\alpha,N)$ converges as $N\to\infty$ through the primes, answering a question of Green. Using recent advances in higher-order Fourier analysis, we prove an extension of this theorem, showing that the result holds for $k$-term progressions for general $k$ and further for all systems of integer linear forms of finite complexity. We also obtain a similar convergence result for the maximum densities of sets free of solutions to systems of linear equations. These results rely on a regularity method for functions on finite cyclic groups that we frame in terms of periodic nilsequences, using in particular some regularity results of Szegedy (relying on his joint work with Camarena) and equidistribution results of Green and Tao.
\end{abstract}

\maketitle

\section{Introduction}

This paper is concerned with the occurrence of linear configurations in subsets of finite cyclic groups. By a linear configuration in a set $A\subset \Zmod{N}$ we mean a tuple of elements of $A$ that solve a given homogeneous system of linear equations with integer coefficients. Such configurations can also be described as images $(\lf_1(\mathbf{n}),\lf_2(\mathbf{n}),\ldots,\lf_t(\mathbf{n}))\in A^t$ of elements $\mathbf{n}\in \Zmod{N}^D$ under a homomorphism $\Zmod{N}^D\to\Zmod{N}^t$ given by a system of linear forms $\lf_1,\lf_2,\ldots,\lf_t:\Z^D\to \Z^t$. We are interested in counting such configurations, especially under the weak assumption that the density $\abs{A}/N$ of $A$ in $\Zmod{N}$ is fixed. To this end we set up the following notation.

\begin{defn}[Solution measure]
Let $D,t \geq 1$ be integers and let $\Lf = (\lf_1, \ldots, \lf_t)$ be a system of linear forms $\lf_1, \ldots, \lf_t : \Z^D \to \Z$. For any function $f : \Zmod{N} \to \C$ we write
\begin{align*}
\Sol_\Lf(f) &:= \E_{\mathbf{n} \in \Zmod{N}^D} f\big(\lf_1(\mathbf{n})\big) \cdots f\big(\lf_t(\mathbf{n})\big),
\end{align*}
referring to this as the \emph{solution measure} of $f$ across $\Lf$. 
\end{defn}

When $f$ is the indicator function $1_A$ of a set $A \subset \Zmod{N}$ (that is $f(x) = 1$ if $x \in A$ and $f(x) = 0$ otherwise), the quantity $\Sol_\Lf(A) := \Sol_\Lf(1_A)$ is simply a normalized count of the configurations corresponding to $\Lf$ in $A$. For example, if $\Lf=(n_1, n_1+n_2, n_1+2n_2, n_1+3n_2)$, then $\Sol_\Lf(A)$ is the number of 4-term arithmetic progressions in $A$, divided by $N^2$.\footnote{More generally, one sees easily that $\Sol_\Lf(A) = \abs{A^t \cap \text{Im}\, \Lf}/ \abs{\text{Im}\, \Lf}$, where $\Lf$ denotes the homomorphism $\Zmod{N}^D\to \Zmod{N}^t$ mentioned above (by a slight abuse of notation).}

Such solution counts have been treated in numerous works. The simplest non-trivial case is when the linear forms describe the solution set of a single linear equation. A central example is the system of forms $\threeAP := (n_1, n_1+n_2, n_1+2n_2)$ determining 3-term arithmetic progressions. These are solutions to the equation $x-2y+z=0$, and 
it is a classical result of Roth \cite{Roth} that for any $\alpha>0$ and $N \geq N_0(\alpha)$, any set $A \subset \Zmod{N}$ of density at least $\alpha$ contains a non-trivial 3-term progression (i.e. one with $n_2 \neq 0$). Combined with a short averaging argument of Varnavides \cite{Varnavides}, this in fact implies that $\Sol_\threeAP(A) \geq c(\alpha)$, where $c(\alpha) > 0$ for non-zero $\alpha$. Moreover, Croot \cite{croot:3APminconv} proved that the best possible lower bound behaves nicely for prime moduli $N$, answering a question of Green.

\begin{theorem}[Croot's limit]\label{thm:croot}
Fix $\alpha \in [0,1]$, and for any positive integer $N$ set 
\[ m_\threeAP(\alpha, N) := \min_{ A \subset \Zmod{N},\, \abs{A} \geq \alpha N} \Sol_\threeAP(A). \]
Then $m_\threeAP(\alpha, N)$ converges as $N\to\infty$ through the primes.
\end{theorem}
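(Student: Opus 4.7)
The plan is to show that the sequence $(m_\threeAP(\alpha, N))_{N \text{ prime}}$ is Cauchy, by proving a one-sided transfer: for every $\epsilon > 0$ there exists $N_0(\epsilon)$ such that, whenever $N_1, N_2$ are primes with $N_0 \leq N_1 \leq N_2$,
\[ m_\threeAP(\alpha, N_2) \leq m_\threeAP(\alpha, N_1) + \epsilon. \]
Convergence then follows by a standard argument: set $L := \liminf_{N \to \infty\text{ prime}} m_\threeAP(\alpha, N)$ and pick a prime $N_1 \geq N_0$ with $m_\threeAP(\alpha, N_1) \leq L + \epsilon$; the transfer forces $\limsup m_\threeAP(\alpha, N) \leq L + 2\epsilon$, for every $\epsilon > 0$.

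\textbf{Fourier structuring.} Given a near-extremal $A \subset \Zmod{N_1}$ of density $\alpha_A \geq \alpha$ and a small $\eta = \eta(\epsilon)$, let $R := \{r \in \Zmod{N_1} \setminus \{0\} : |\hat{1}_A(r)| \geq \eta\}$; by Parseval $|R| \leq \eta^{-2}$, and $R = -R$ since $1_A$ is real. Set
\[ f_s(x) := \alpha_A + \sum_{r\in R} \hat{1}_A(r)\, e_{N_1}(rx), \]
so that $h := 1_A - f_s$ has $|\hat h(r)| \leq \eta$ for $r \notin R \cup \{0\}$ and vanishes otherwise. Using $\Sol_\threeAP(f) = \sum_r \hat f(r)^2 \hat f(-2r)$, expanding $\Sol_\threeAP(1_A) - \Sol_\threeAP(f_s)$ into trilinear terms in $h$ and $f_s$, and applying the Cauchy-Schwarz bound $\sum_r |\hat g(r)||\hat g(-2r)| \leq \|g\|_2^2 \leq \alpha$, one obtains $|\Sol_\threeAP(1_A) - \Sol_\threeAP(f_s)| \leq C\eta$.

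\textbf{Spectral transfer.} Lift each $r \in R$ to $\tilde r \in (-N_1/2, N_1/2] \cap \Z$, and take $r^\dagger \in \Zmod{N_2}$ to be the class nearest to $\tilde r N_2/N_1$. For $N_2$ large enough (in terms of $\eta^{-1}$, possibly chosen via a simultaneous Dirichlet approximation to avoid rounding mismatches), the map $r \mapsto r^\dagger$ is injective, commutes with negation, and the genuine (non-wraparound) additive relations among the $\tilde r$ transfer to relations among the $r^\dagger$ modulo $N_2$. Define
\[ g_s(y) := \alpha_A + \sum_{r\in R} \hat{1}_A(r)\, e_{N_2}(r^\dagger y); \]
a direct enumeration of the at most $|R|^2$ contributing triples in the Fourier expansion then gives $\Sol_\threeAP(g_s) = \Sol_\threeAP(f_s) + o_{N_2\to\infty}(1)$.

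\textbf{Main obstacle: realising $g_s$ as an indicator.} The hardest step is constructing $A' \subset \Zmod{N_2}$ of density at least $\alpha$ with $\Sol_\threeAP(1_{A'}) \leq \Sol_\threeAP(g_s) + O(\epsilon)$. My plan is a random construction: include each $y \in \Zmod{N_2}$ independently with probability $p(y) := \max(0, \min(1, g_s(y)))$. Standard second-moment concentration yields $|A'|/N_2 = \E p + o(1)$ and $\Sol_\threeAP(1_{A'}) = \Sol_\threeAP(p) + o(1)$ with high probability, and a final uniform perturbation by $O(\epsilon N_2)$ extra elements secures $|A'| \geq \alpha N_2$ at cost $O(\epsilon)$ in the 3-AP count. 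The delicate point is bounding the truncation error $|\Sol_\threeAP(p) - \Sol_\threeAP(g_s)|$: although $g_s$ is a trigonometric polynomial of bounded spectral complexity with $\|g_s\|_2^2 \leq \alpha + \alpha^2$, it need not be $[0,1]$-valued, so the non-linear cutoff $g_s \mapsto p$ can in principle reintroduce uncontrolled high-frequency content. Bounding this via an $L^p$ or Chebyshev-type estimate on the small set $\{y : g_s(y) \notin [0,1]\}$, and trading accuracy for a smaller $\eta$, is the main technical hurdle; it is precisely this rounding step which, in the higher-complexity setting (general $k$-AP or arbitrary linear systems), forces the authors to invoke the full machinery of higher-order Fourier analysis and nilsequence regularity alluded to in the abstract.
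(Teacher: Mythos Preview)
Your outline is essentially Croot's original Fourier-analytic argument, which the paper cites but does not reproduce; the paper instead obtains Theorem~\ref{thm:croot} as the $\Lf=\threeAP$ special case of Theorem~\ref{mincount}, proved via the periodic nilsequence regularity lemma (Theorem~\ref{thm:periodic_regularity}), the counting lemma (Theorem~\ref{thm:periodic_counting}), and the construction of a periodic irrational polynomial (Proposition~\ref{prop:periodic_irr}). The high-level structure matches yours---decompose $1_A$, transfer the structured part to the new modulus, then randomly sample a set (Lemma~\ref{MainF2S})---and your one-sided Cauchy argument is fine (the paper's transference is in fact two-sided, but this is not essential).

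The point where the approaches diverge is exactly the step you flag as the ``main obstacle''. Your structured function $f_s$ is a Fourier \emph{projection} of $1_A$, and such projections need not be $[0,1]$-valued; truncating $g_s$ back into $[0,1]$ is then a nonlinear operation whose effect on $\Sol_\threeAP$ you do not control. The paper avoids this entirely: its regularity lemma produces $f_\nil(n)=F(g(n)\Gamma)$ with $F$ a Lipschitz function on a nilmanifold (for 3APs, simply a torus $\T^k$) and with $f_\nil$ already $[0,1]$-valued by construction (property~(iv) of Theorem~\ref{thm:periodic_regularity}). One then clips $F$ itself to $[0,1]$, which does not increase $\norm{F}_{\Lip}$, and transfers by replacing the $N_1$-periodic orbit $g$ by an $N_2$-periodic irrational orbit $h$ on the \emph{same} nilmanifold with the \emph{same} $F$. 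The transferred function $f'(n)=F(h(n)\Gamma)$ is then automatically $[0,1]$-valued, and the counting lemma matches $\Sol_\threeAP(f_\nil)$ and $\Sol_\threeAP(f')$ to the same integral over the Leibman nilmanifold. In Fourier language this amounts to replacing your Fourier projection by the conditional expectation of $1_A$ onto the Bohr-set $\sigma$-algebra generated by $R$: this \emph{is} $[0,1]$-valued, is still a bounded-complexity function of the phases $(e_{N_1}(rx))_{r\in R}$, and can be transferred by changing the orbit rather than by rescaling individual frequencies. That single change dissolves your truncation difficulty.

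Two smaller remarks. First, your spectral transfer $r\mapsto r^\dagger\approx \tilde r N_2/N_1$ must preserve the relation $(-2r)^\dagger\equiv -2r^\dagger\pmod{N_2}$; the rounding can break this by~$1$, so your Dirichlet-approximation comment needs to be made precise (e.g.\ first dilate $A$ so that $R$ lies in a short interval and no wraparound occurs, or, in the paper's language, view $f_s$ as a function on $\T^{|R|}$ and transfer the orbit). Second, your closing sentence slightly misdiagnoses the role of higher-order Fourier analysis: the nilsequence machinery is needed for $k\geq 4$ because $\norm{\cdot}_{U^{s+1}}$ with $s\geq 2$ is not controlled by classical Fourier coefficients, not because the rounding step becomes harder---the rounding is handled identically for all $s$ by the $[0,1]$-valued output of the regularity lemma.
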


Croot also showed that $m_\threeAP(\alpha, N)$ can fail to converge if $N$ is allowed to tend to $\infty$ over the odd numbers \cite[Theorem 2]{croot:3APminconv}. This failure comes from integers sharing some fixed factor, so it is natural to address it by restricting $N$ to the primes.

A central tool in Croot's proof was the classical Fourier transform, and his argument can be viewed as an instance of what is now often referred to as the regularity method in arithmetic combinatorics. Various Fourier-analytic versions of this method, consisting roughly in using the dominant Fourier coefficients of an additive set to obtain information on the set's additive structure, have been applied successfully to numerous other combinatorial problems; see for instance \cite{Roth,Bourgain,green-ruzsa:sumfree} and also \cite[Chapter 4]{T-V}. In the last decade, the scope of this method has been considerably widened by the development of a generalization of Fourier analysis known as \emph{higher-order Fourier analysis}, yielding several notable applications \cite{GTarith,Taodic}. 
This paper aims to contribute to this process and illustrate further the applicability of the theory, in connection with Theorem \ref{thm:croot}. Indeed, it is natural to ask whether the result holds more generally for $k$-term progressions with $k\geq 3$, a prospect also raised by Green. While the classical Fourier-analytic regularity method as used by Croot is known not to be helpful for this question, we show here that the higher-order theory does yield the desired generalization. To this end, in particular we shall adapt parts of the work of Green and Tao \cite{GTarith} and combine this with results of Szegedy \cite{SzegedyHFA}. The generalization of Theorem \ref{thm:croot} to longer progressions is then a special case of the following result.

\begin{theorem}\label{mincount}
Fix $\alpha \in [0,1]$ and let $\Lf$ be a system of integer linear forms, any two of which are linearly independent. Set, for every positive integer $N$, 
\[ m_\Lf(\alpha, N) := \min_{A \subset \Zmod{N},\, \abs{A} \geq \alpha N} \Sol_\Lf(A). \]
Then $m_\Lf(\alpha, N)$ converges as $N \to \infty$ through primes.
\end{theorem}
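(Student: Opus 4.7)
The plan is to prove that the sequence $\big(m_\Lf(\alpha,N)\big)_{N}$ is Cauchy along the primes. Fix $\epsilon>0$ and two large primes $N,M$. By a symmetric argument it is enough to show that given a near-minimizer $A \subset \Zmod{N}$ with $\abs{A}\geq \alpha N$ and $\Sol_\Lf(A) = m_\Lf(\alpha,N) + o(1)$, one can construct a set $B\subset \Zmod{M}$ with $\abs{B}\geq \alpha M$ and $\Sol_\Lf(B) \leq \Sol_\Lf(A) + \epsilon$, giving $m_\Lf(\alpha,M) \leq m_\Lf(\alpha,N) + \epsilon + o(1)$.

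The central tool is higher-order Fourier regularity. Pairwise linear independence of the forms in $\Lf$ ensures finite complexity of $\Sol_\Lf$: there is some $s = s(\Lf)$ such that if any factor $f_i$ in the product defining $\Sol_\Lf(f_1,\ldots,f_t)$ is small in the Gowers $U^{s+1}$-norm, then the whole expression is small (generalized von Neumann theorem). I would therefore apply the Szegedy regularity decomposition (built on his work with Camarena) to write on $\Zmod{N}$
\[
1_A = f_\nil + f_\sml + f_\unf,
\]
where $f_\nil(n) = F(g(n)\Gamma)$ is a periodic nilsequence of bounded complexity (nilmanifold $G/\Gamma$ of step $s$, Lipschitz $F$, polynomial orbit $g$), $\norm{f_\sml}_2 \leq \delta_1$, $\norm{f_\unf}_{U^{s+1}} \leq \delta_2$, and $\E f_\nil = \alpha$. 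The parameters $\delta_1,\delta_2$ are chosen small in terms of $\epsilon$ and the nilmanifold complexity is bounded in terms of $\delta_1,\delta_2$ uniformly in $N$. Combined with a Cauchy--Schwarz argument for $f_\sml$, this reduces the problem to the nilsequence part: $\Sol_\Lf(A) = \Sol_\Lf(f_\nil) + O(\epsilon/2)$.

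Next I would invoke the Green--Tao quantitative equidistribution theorem on nilmanifolds: the $D$-variable polynomial orbit
\[
\mathbf{n} \mapsto \big(g(\lf_1(\mathbf{n}))\Gamma,\ldots,g(\lf_t(\mathbf{n}))\Gamma\big) \in (G/\Gamma)^t
\]
equidistributes in a Leibman sub-nilmanifold $H/\Lambda \subset (G/\Gamma)^t$ determined by the algebraic data of $(g,\Lf)$, so
\[
\Sol_\Lf(f_\nil) = \int_{H/\Lambda} F \otimes \cdots \otimes F \, d\mu_{H/\Lambda} + o_N(1),
\]
with a main term that depends only on the nilmanifold data, not on $N$. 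To transfer to $\Zmod{M}$, I would construct a polynomial sequence $g_M$ on $\Z$ with the same data $(G/\Gamma, F, H/\Lambda)$, period dividing $M$, and equidistribution on $H/\Lambda$. This is feasible for $M$ prime and sufficiently large because the rational data in $g$ has bounded denominators (from the bounded complexity), so can be re-implemented modulo $M$. The resulting $\tilde f_\nil(n) := F(g_M(n)\Gamma)$ on $\Zmod{M}$ then satisfies $\Sol_\Lf(\tilde f_\nil) = \Sol_\Lf(f_\nil) + o(1)$ and $\E \tilde f_\nil = \alpha + o(1)$.

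Finally, I would round $\tilde f_\nil$ to an indicator by independent random sampling: include each $n \in \Zmod{M}$ in $B$ with probability $\tilde f_\nil(n)$. Then $\E\abs{B} = (\alpha + o(1))\, M$ and $\E \Sol_\Lf(1_B) = \Sol_\Lf(\tilde f_\nil) + O(1/M)$, the correction being a diagonal contribution from coincidences among the $\lf_i(\mathbf{n})$ that is small thanks to the pairwise linear independence of $\Lf$. A second-moment concentration argument plus a negligible density correction yields a deterministic set $B$ with $\abs{B}\geq \alpha M$ and $\Sol_\Lf(B) \leq \Sol_\Lf(\tilde f_\nil) + o(1)$. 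The main obstacle I anticipate is the nilsequence transfer step: realizing the abstract nilmanifold data on $\Zmod{M}$ while preserving both $M$-periodicity and quantitative equidistribution on $H/\Lambda$ uniformly in $M$. Restricting to primes is what makes this feasible --- every bounded denominator becomes invertible modulo large primes --- paralleling the role primes play in Croot's proof, given the odd-$N$ counterexample mentioned after Theorem \ref{thm:croot}.
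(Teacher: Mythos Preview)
Your outline is correct and follows the paper's approach closely: a Cauchy argument, periodic regularity via Szegedy, reduction to the nilsequence part via the generalized von Neumann theorem, equidistribution on a Leibman nilmanifold, construction of an $M$-periodic orbit with matching equidistribution, and random rounding (the paper's Lemma~\ref{MainF2S}) followed by a small density adjustment. The paper packages the transference steps into a single statement, Theorem~\ref{thm:periodic_transference}.

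Two places where the paper's mechanism is sharper than your sketch and where the real work lies. First, rather than letting $H/\Lambda$ be whatever sub-nilmanifold the $\Lf$-orbit happens to land in, the paper introduces \emph{$A$-irrationality} (Definition~\ref{defn:irrational}) and proves a factorization for periodic polynomials (Proposition~\ref{prop:full_fact}): any $N$-periodic $g$ with $p_1(N)$ large factors as a constant times a highly irrational $g'$ on a sub-nilmanifold $G'/\Gamma'$ times a $\Gamma$-valued sequence. This upgrades the regularity lemma (Theorem~\ref{thm:periodic_regularity}) so that $f_\nil$ is already irrational, forcing its $\Lf$-orbit to equidistribute on the \emph{full} Leibman nilmanifold $g(0)^\Delta (G')^\Lf/(\Gamma')^\Lf$, a canonical object depending only on $(G'/\Gamma',\Lf)$ and not on $g'$ itself. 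Second, your ``bounded denominators become invertible modulo $M$'' heuristic is not the actual mechanism: the Taylor coefficients of $g$ are arbitrary group elements, not rationals to be reduced. Instead the paper constructs from scratch (Proposition~\ref{prop:periodic_irr}) an $M$-periodic, $A$-irrational polynomial on the \emph{same} $G'/\Gamma'$, via an inductive argument along the filtration that at each level picks a $q$th root mod $\Gamma_i$ avoiding all low-complexity character obstructions (this is where the condition on $p_1(M)$ enters). Irrationality then guarantees the new orbit equidistributes on the same Leibman nilmanifold, so the two integrals agree. You correctly flagged this transfer step as the main obstacle; these two propositions are precisely how the paper overcomes it.
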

The pairwise linear-independence assumption ensures that the configurations have finite complexity in the sense of \cite{GTlin}. We make some further remarks on this assumption in Appendix \ref{appendix:pairwise-independence}.

In addition to the minimum number of configurations in sets of a given density, another central quantity of interest in this area is the maximum density of a set containing no configurations whatsoever.
\begin{defn}
Given a system $\Lf $ of linear forms $\lf_1, \ldots, \lf_t:\Z^D\to \Z$ and a positive integer $N$, we say $A\subset \Zmod{N}$ is \emph{$\Lf$-free} if $A$ does not contain any configurations determined by $\Lf$, that is if $A^t\cap \Lf(\Zmod{N}^D)=\emptyset$. We define
\[
	d_\Lf(\Zmod{N}) := \max_{A\subset \Zmod{N},\, A\text{ is $\Lf$-free}} \abs{A}/N.
\]
If $\mathcal{F}$ is a finite family of such systems, we say $A\subset \Zmod{N}$ is \emph{$\mathcal{F}$-free} if $A$ is $\Lf$-free for every $\Lf\in\mathcal{F}$, and we define
\[
	d_\mathcal{F}(\Zmod{N}) := \max_{A\subset \Zmod{N},\, A\text{ is $\mathcal{F}$-free}}\ \abs{A}/N.
\]
\end{defn}
Our main result concerning these quantities is the following theorem which extends \cite[Theorem 1.3]{candela-sisask}.
\begin{theorem}\label{maxdensity}
Let $\mathcal{F}$ be a finite family of systems of linear forms, in each of which the forms are pairwise linearly independent. Then $d_\mathcal{F}(\Zmod{N})$ converges as $N\to\infty$ over primes.
\end{theorem}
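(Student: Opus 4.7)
The strategy is to deduce Theorem \ref{maxdensity} from Theorem \ref{mincount} combined with an arithmetic removal lemma for systems of linear equations on $\Zmod{N}$ (as in the work of Kr\'al', Serra, and Vena for pairwise linearly independent systems). The plan has two steps.

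\emph{Step one: a joint version of Theorem \ref{mincount}.} I would first establish that the quantity
\[
m_\mathcal{F}(\alpha, N) := \min_{\substack{A \subset \Zmod{N} \\ |A| \geq \alpha N}} \max_{\Lf \in \mathcal{F}} \Sol_\Lf(A)
\]
converges as $N \to \infty$ through primes, and call the limit $M_\mathcal{F}(\alpha)$; it is monotone non-decreasing in $\alpha$. Since $\mathcal{F}$ is finite, one can run the higher-order Fourier regularity machinery underlying Theorem \ref{mincount} with a nilsequence degree that exceeds the complexity of every $\Lf \in \mathcal{F}$; the generalised von Neumann theorem then simultaneously controls $\Sol_\Lf$ for all $\Lf \in \mathcal{F}$ by a single Gowers norm, producing an arithmetic regularity decomposition $1_A = f_{\nil} + f_{\sml} + f_{\unf}$ whose structured part $f_{\nil}$ is a periodic nilsequence on a single nilmanifold of bounded complexity (independent of $N$) and which approximates $\Sol_\Lf(1_A)$ for every $\Lf \in \mathcal{F}$. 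The Green--Tao equidistribution theory then realises each $\Sol_\Lf(f_{\nil})$ as a continuous functional on the nilmanifold, reducing the $\min$-$\max$ to a continuous optimisation problem whose value does not depend on $N$ in the limit.

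\emph{Step two: from $M_\mathcal{F}$ to $d_\mathcal{F}$.} Let $\alpha^* := \sup\{\alpha \in [0,1] : M_\mathcal{F}(\alpha) = 0\}$. For the upper bound $\limsup_N d_\mathcal{F}(\Zmod{N}) \leq \alpha^*$: if $\alpha > \alpha^*$ then $M_\mathcal{F}(\alpha) > 0$, so for all large prime $N$ every $A$ of density at least $\alpha$ satisfies $\Sol_\Lf(A) \geq M_\mathcal{F}(\alpha)/2$ for some $\Lf \in \mathcal{F}$, placing a non-trivial $\Lf$-configuration inside $A$ and preventing $A$ from being $\mathcal{F}$-free. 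For the matching lower bound $\liminf_N d_\mathcal{F}(\Zmod{N}) \geq \alpha^*$: if $\alpha < \alpha^*$ then $M_\mathcal{F}(\alpha) = 0$, so for each $\epsilon > 0$ and each sufficiently large prime $N$ there is an $A \subset \Zmod{N}$ of density at least $\alpha$ with $\Sol_\Lf(A) \leq \epsilon$ for every $\Lf \in \mathcal{F}$. The arithmetic removal lemma applied to each $\Lf$ produces an exceptional set $E_\Lf \subset A$ of size $o_\epsilon(N)$ whose removal destroys all non-trivial $\Lf$-configurations; since $\mathcal{F}$ is finite, $A \setminus \bigcup_{\Lf \in \mathcal{F}} E_\Lf$ is $\mathcal{F}$-free and still has density at least $\alpha - o_\epsilon(1)$. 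Letting $\epsilon \to 0$ gives $d_\mathcal{F}(\Zmod{N}) \geq \alpha - o(1)$, and combining the two bounds yields $d_\mathcal{F}(\Zmod{N}) \to \alpha^*$.

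The main obstacle is Step one: convergence of $m_\Lf(\alpha, N)$ for each individual $\Lf \in \mathcal{F}$, guaranteed by Theorem \ref{mincount}, does not automatically imply convergence of the $\min$-$\max$ over the family, and one must therefore revisit the regularity argument of Theorem \ref{mincount} and run it with common parameters adapted to all systems in $\mathcal{F}$ at once; the finiteness of $\mathcal{F}$ is essential so that a single nilmanifold complexity parameter suffices. A secondary but routine point is verifying that the removal lemma applies uniformly across $\mathcal{F}$ for systems of pairwise linearly independent linear forms on $\Zmod{N}$ with $N$ prime, which is standard for systems of finite complexity but must be invoked with some care.
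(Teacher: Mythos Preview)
Your approach is correct in outline and uses the same two essential tools as the paper (the periodic transference result underlying Theorem~\ref{mincount} together with the arithmetic removal lemma), but it is organised rather differently and somewhat more circuitously.

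The paper does not pass through an auxiliary quantity $m_\mathcal{F}(\alpha,N)$ or a threshold $\alpha^*$. Instead it argues directly that $(d_\mathcal{F}(\Zmod{N}))$ is Cauchy: given primes $N,M$ large enough, take an $\mathcal{F}$-free $A\subset\Zmod{N}$ of maximal density $\alpha$; apply the periodic transference theorem (which already approximates $\Sol_\Lf$ simultaneously for all systems of size $\leq L$) to get $f':\Zmod{M}\to[0,1]$ with $\Sol_\Lf(f')\leq\delta/2$ for each $\Lf\in\mathcal{F}$ and $\E f'\approx\alpha$; randomly round $f'$ to a set $B'$; and apply the removal lemma to each $\Lf$ to obtain an $\mathcal{F}$-free $B\subset\Zmod{M}$ of density at least $\alpha-\epsilon$. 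This gives $d_\mathcal{F}(\Zmod{M})\geq d_\mathcal{F}(\Zmod{N})-\epsilon$, and symmetry finishes the argument.

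Your route buys a pleasant characterisation of the limit as the threshold $\alpha^*=\sup\{\alpha:M_\mathcal{F}(\alpha)=0\}$, but at the cost of an extra convergence result to establish. Note also that your description of Step~1 as ``reducing the min--max to a continuous optimisation problem on the nilmanifold'' is not quite how the argument runs: different sets $A$ produce different nilmanifolds and Lipschitz functions, so there is no single compact space over which one is optimising. What actually makes Step~1 work is the Cauchy argument via transference (exactly as in the proof of Theorem~\ref{mincount}), and you already have the key observation that transference handles all $\Lf\in\mathcal{F}$ at once. Finally, the qualifier ``non-trivial'' in your upper-bound step is unnecessary: the paper's definition of $\Lf$-free forbids all configurations, so $\Sol_\Lf(A)>0$ immediately rules $A$ out.
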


The quantities $m_\Lf(\alpha) := \lim_{\substack{N\to\infty \\ N\text{ prime}} } m_\Lf(\alpha,N)$ and $d_\mathcal{F} := \lim_{\substack{N\to\infty \\ N\text{ prime}} } d_\mathcal{F}(\Zmod{N})$ stemming from these results depend significantly on whether the systems of forms involved are \emph{invariant}. We say a system $\Lf$ is invariant if $\Lf(\Q^D)$ is invariant under translations by constant vectors, that is if $\Lf(\Q^D)+(1,\ldots,1)=\Lf(\Q^D)$.

Regarding the quantity $m_\Lf(\alpha)$, it is a highly non-trivial fact related to Szemer\'edi's theorem that if $\Lf$ is invariant then an analogue of the Varnavides version of Roth's theorem holds, namely $\Sol_\Lf(A)\geq c_\Lf(\alpha) > 0$ for any set $A\subset\Zmod{N}$ of density at least $\alpha > 0$.\footnote{This can be seen to follow easily from \cite[Theorem 1]{KSVGR}.} In particular, we have $m_\Lf(\alpha)>0$ for every $\alpha>0$. By contrast, if $\Lf$ is not invariant, then there exists $\alpha>0$ such that for each large prime $N$ there is a subset of $\Zmod{N}$ of density at least $\alpha$ with no $\Lf$-configurations whatsoever; see Lemma \ref{lemma:non-invariant-free-sets}. In particular we have $m_\Lf(\alpha')=0$ for all $\alpha' \leq \alpha$. The problem of estimating this function $m_\Lf$ for a general $\Lf$ is of course an extension of the well-known problem of improving the bounds for Szemer\'edi's theorem.

On the other hand, the limit $d_\mathcal{F}$ is of interest mainly for families $\mathcal{F}$ consisting only of non-invariant forms. Indeed, even if one weakens the definition of $\Lf$-free sets to allow them to contain certain trivial configurations, such as constant vectors, it follows from Szemer\'edi's theorem that $d_\mathcal{F} = 0$ whenever $\mathcal{F}$ contains an invariant form. By contrast, for families $\mathcal{F}$ consisting only of non-invariant forms it is easy to see that $d_\mathcal{F} > 0$ (see Lemma \ref{lemma:non-invariant-free-sets}), though not much is known concerning the exact value of this constant. It would be interesting to understand this quantity in general; see \cite{Schoen} for some results of Schoen in this direction. 

Let us now briefly describe the combinatorial ideas underlying the above theorems. Croot's proof of Theorem \ref{thm:croot} consisted essentially in showing that, given an arbitrary set in $\Zmod{p}$, there exists a set in $\Zmod{q}$ having roughly the same solution measure for the system of forms corresponding to 3-term progressions, provided $p$ and $q$ are sufficiently large primes with $q \gg p$. We shall follow the same broad strategy for Theorem \ref{mincount}, and will combine this with a so-called arithmetic removal lemma to obtain Theorem \ref{maxdensity}.
To state the main result underpinning this strategy, we use the following definition.

\begin{defn}[Size of forms]
We say that a system $\Lf = (\lf_1, \ldots, \lf_t)$ of linear forms $\lf_1, \ldots, \lf_t : \Z^D \to \Z$ has \emph{size at most $L$} if $D,t \leq L$ and the coefficients of each $\lf_i$ have absolute value at most $L$.
\end{defn}
Our main combinatorial result can now be stated as follows.

\begin{theorembk}[Periodic transference]
Let $L \geq 1$ be an integer and let $\delta \in (0,1)$. Then for any primes $p, q \geq N_0(\delta,L)$ and any set $A \subset \Zmod{p}$, there is a set $B \subset \Zmod{q}$ such that, for any system $\Lf$ of linear forms of size at most $L$, any two of which are linearly independent, one has $\abs{ \Sol_\Lf(A) - \Sol_\Lf(B) } \leq \delta$.
\end{theorembk}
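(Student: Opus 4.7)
The idea is to apply a higher-order arithmetic regularity lemma on $\Zmod p$ to decompose $1_A$ into a structured part (a periodic nilsequence of bounded complexity), an $L^2$-small part, and a Gowers-uniform part, and then to reproduce the structured part on $\Zmod q$ as a nilsequence on the \emph{same} nilmanifold, rounding to a set $B$ at the end. The nilmanifold data is what is portable between the two cyclic groups.

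\textbf{Step 1 (Regularity).} Fix $s=s(L)$, the true complexity (in the sense of \cite{GTlin}) of systems of size at most $L$; this is finite by the pairwise-independence hypothesis. Choose parameters $\eta,\epsilon$ small in terms of $\delta,L$, and apply a higher-order arithmetic regularity lemma (\`a la Green--Tao, using Szegedy's refinements to obtain a genuinely $p$-periodic output) to $1_A:\Zmod p\to[0,1]$, producing
\[ 1_A = f_\nil+f_\sml+f_\unf \]
where $f_\nil(n)=F(g(n)\Gamma)$ for a filtered nilmanifold $G/\Gamma$ and polynomial sequence $g:\Z\to G$ of complexity bounded by some $M=M(\delta,L)$ independent of $p$, with $F:G/\Gamma\to[0,1]$ being $M$-Lipschitz, $\|f_\sml\|_{L^2}\le\epsilon$, and $\|f_\unf\|_{U^{s+1}}\le\eta$.

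\textbf{Step 2 (Counting lemma, reduction to $f_\nil$).} The generalized von Neumann inequality, applicable because $\Lf$ has complexity at most $s$, shows that $f_\unf$ contributes $O_L(\eta)$ to $\Sol_\Lf$; a standard Cauchy--Schwarz argument shows that $f_\sml$ contributes $O_L(\epsilon)$. Hence for $\eta,\epsilon$ chosen small enough,
\[ \abs{\Sol_\Lf(1_A)-\Sol_\Lf(f_\nil)}\le \delta/3. \]

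\textbf{Step 3 (Intrinsic value of $\Sol_\Lf(f_\nil)$).} Apply the Green--Tao equidistribution and factorization theorem on nilmanifolds to the joint polynomial sequence $\mathbf n\mapsto \bigl(g(\lf_1(\mathbf n)),\ldots,g(\lf_t(\mathbf n))\bigr)$ valued in $G^t$. Its orbit equidistributes on a sub-nilmanifold $H/\Lambda\le G^t/\Gamma^t$, the ``Leibman group'' cut out by $\Lf$ and the filtration on $G$; crucially $H/\Lambda$ does not depend on $p$. This gives
\[ \Sol_\Lf(f_\nil)=\int_{H/\Lambda}F^{\otimes t}\,d\mu+o_{p\to\infty}(1). \]
Now produce, on $\Zmod q$, a periodic polynomial sequence $g'$ valued in the \emph{same} $G/\Gamma$ whose coefficients match those of $g$ modulo the equidistribution data extracted above, so that $\tilde f_\nil(n):=F(g'(n)\Gamma)$ satisfies the identical equidistribution statement with $p$ replaced by $q$. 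This yields $\abs{\Sol_\Lf(\tilde f_\nil)-\Sol_\Lf(f_\nil)}\le\delta/3$ for $p,q\ge N_0(\delta,L)$.

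\textbf{Step 4 (Rounding to a set).} Let $B\subset\Zmod q$ be random, with $n\in B$ independently with probability $\tilde f_\nil(n)$. By pairwise independence, all but an $O(1/q)$ proportion of tuples $\mathbf n$ have pairwise distinct coordinates $\lf_i(\mathbf n)$, so $\E[\Sol_\Lf(B)]=\Sol_\Lf(\tilde f_\nil)+O(1/q)$. A bounded-differences concentration inequality (each coordinate alters $\Sol_\Lf$ by $O(1/q)$ thanks to pairwise independence) then gives $\Sol_\Lf(B)=\E[\Sol_\Lf(B)]+O(q^{-1/2})$ with high probability, so a valid $B$ exists with $\abs{\Sol_\Lf(B)-\Sol_\Lf(\tilde f_\nil)}\le\delta/3$. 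Chaining the three estimates yields the theorem.

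\textbf{Main obstacle.} The difficulty is Step 3: the regularity lemma on $\Zmod p$ outputs a polynomial sequence whose coefficients may be entangled with the modulus $p$, but to transfer to $\Zmod q$ we need a description of the structured part that is truly $p$-independent and can be re-realized as a $q$-periodic nilsequence on the same $G/\Gamma$. Carrying this out cleanly, and verifying the matching of equidistribution statistics on both sides via the Green--Tao quantitative equidistribution theorem, is the technical heart of the argument and is what forces the machinery of periodic nilsequences on which the paper is built.
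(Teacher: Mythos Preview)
Your outline is essentially the paper's own strategy: periodic regularity on $\Zmod p$, reduction to the nilsequence part via generalized von Neumann, a counting lemma to express $\Sol_\Lf(f_\nil)$ as an integral over a Leibman nilmanifold, reconstruction of a matching $q$-periodic nilsequence, and random rounding to a set. You have also correctly located the crux in Step~3.

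Two points of imprecision in Step~3 are worth flagging, since they are exactly where the paper does its real work. First, your claim that the joint orbit of $g^\Lf$ equidistributes on \emph{the} Leibman nilmanifold ``cut out by $\Lf$ and the filtration on $G$'' is only true once $g$ has been made \emph{irrational}; a generic periodic polynomial will equidistribute on something strictly smaller, and that smaller object depends on $g$ (hence on $A$) in ways you would then have to track. The paper handles this by upgrading the regularity lemma itself (Theorem~\ref{thm:periodic_regularity}) so that the output nilsequence is $\mathcal F(M)$-irrational on a bounded-complexity subnilmanifold $G'/\Gamma'$; the counting lemma (Theorem~\ref{thm:periodic_counting}) then gives the integral over $g(0)^\Delta G'^\Lf/\Gamma'^\Lf$ directly, with no further factorization of the joint polynomial needed. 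Second, the reconstruction on $\Zmod q$ is not done by ``matching coefficients'' of $g$; rather, the paper shows (Proposition~\ref{prop:periodic_irr}) that on \emph{any} bounded-complexity filtered nilmanifold one can build from scratch a $q$-periodic, highly irrational polynomial (with any prescribed starting point), and then invokes the counting lemma a second time. So the portable object is not the polynomial $g$ but the pair $(G'/\Gamma', F)$ together with $g(0)$. Your rounding in Step~4 via bounded differences is a valid alternative to the paper's use of Lemma~\ref{MainF2S} (which bounds $\norm{1_B - f}_{U^{s+1}}$ directly and then applies Theorem~\ref{thm:GvNdiag}); both work.
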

Note that in particular the densities $\abs{A}/p$ and $\abs{B}/q$ of the sets are very close. This theorem will actually be a simple consequence of the following functional version, where we write $\Sol_\Lf(f:\Zmod{N})$ to emphasize the domain of a function $f$.

\begin{theorembk}[Periodic transference, functional version]\label{pertranfn}
Let $L \geq 1$ be an integer and let $\delta \in (0,1)$. Then for any primes $p, q \geq N_0(\delta,L)$ and any function $f : \Zmod{p} \to [0,1]$, there is a function $f' : \Zmod{q} \to [0,1]$ such that, for any system $\Lf$ of linear forms of size at most $L$, any two of which are linearly independent, one has
\[ \abs{ \Sol_{\Lf}(f : \Zmod{p}) - \Sol_\Lf(f' : \Zmod{q}) } \leq \delta. \]
\end{theorembk}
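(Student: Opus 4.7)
The proof will mirror Croot's strategy of approximating $f$ by a structured object and transferring that object to $\Zmod{q}$, but with classical Fourier analysis replaced by the full apparatus of higher-order Fourier analysis. First I would apply an arithmetic regularity lemma in the spirit of Green--Tao \cite{GTarith} and Szegedy \cite{SzegedyHFA} to obtain a decomposition $f = f_{\nil} + f_{\sml} + f_{\unf}$, where $f_{\nil}$ is a periodic nilsequence of the form $n\mapsto F(g(n)\Gamma)$ on a nilmanifold $G/\Gamma$ of bounded step, dimension, and Mal'cev complexity, with $F:G/\Gamma\to[0,1]$ Lipschitz of bounded norm and $g:\Z\to G$ a polynomial sequence whose reduction modulo $p$ is well-defined; $f_{\sml}$ has $L^2$-norm at most $\varepsilon$; and $f_{\unf}$ has Gowers $U^{s+1}$-norm at most $\varepsilon'$. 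Here $s=s(L)$ is the ``true complexity'' bound for systems of forms of size at most $L$ with pairwise linearly independent forms, and $\varepsilon,\varepsilon'$ are chosen as small functions of $\delta$ and $L$ so that the eventual error estimates close up.

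Since the nilmanifold data $(G,\Gamma,F,g)$ defining $f_{\nil}$ do not involve $p$, I can transfer them to $\Zmod{q}$ by reinterpreting the same polynomial sequence $g$ modulo $q$, thereby defining $f'_{\nil}:\Zmod{q}\to[0,1]$, and I would set $f' := f'_{\nil}$ (clipping to $[0,1]$ if needed). The claim is then that $\Sol_\Lf(f:\Zmod{p})$ and $\Sol_\Lf(f':\Zmod{q})$ are both close to a common quantity, namely an integral of $F^{\otimes t}$ over a Leibman-type orbit closure $X_\Lf\subset (G/\Gamma)^t$ associated with $\Lf$ and $g$, whose Haar measure $\mu_\Lf$ depends only on $\Lf$ and the nilmanifold data, not on the modulus.

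On each side I would expand $\Sol_\Lf(f_{\nil}+f_{\sml}+f_{\unf})$ into $3^t$ multilinear terms. The generalized von Neumann theorem (valid at complexity $s$, which covers all the admissible $\Lf$) shows that every term containing a factor of $f_{\unf}$ contributes $O(\|f_{\unf}\|_{U^{s+1}}) = O(\varepsilon')$. Every term containing a factor of $f_{\sml}$ but no factor of $f_{\unf}$ is handled by Cauchy--Schwarz together with the boundedness of $f_{\nil}$, contributing $O(\|f_{\sml}\|_2) = O(\varepsilon)$. This reduces the task to comparing $\Sol_\Lf(f_{\nil}:\Zmod{p})$ with $\Sol_\Lf(f'_{\nil}:\Zmod{q})$, for which I would appeal to the Green--Tao equidistribution theorem for polynomial orbits on nilmanifolds: both quantities are within $o(1)$ of $\int_{X_\Lf}\prod_{i=1}^t F(x_i)\ud\mu_\Lf(\mathbf{x})$ as $p,q\to\infty$, the convergence being quantitative in terms of the Lipschitz norm of $F$ and the complexity of $g$.

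The main obstacle will be the first step: ensuring the regularity decomposition produces an $f_{\nil}$ that is a genuinely \emph{periodic} nilsequence on $\Zmod{p}$ (not merely a nilsequence on $\Z$), realised by a polynomial sequence $g$ that descends cleanly to both $\Zmod{p}$ and $\Zmod{q}$, with error parameters coordinated so that a single choice of $f'$ works uniformly for all admissible $\Lf$. Achieving this calls for a careful cyclic-group adaptation of the Szegedy--Camarena framework, coupled with equidistribution estimates quantitative enough to be uniform over the finite family of systems of size at most $L$. Once these are in place, assembling the three approximations above yields $|\Sol_\Lf(f:\Zmod{p}) - \Sol_\Lf(f':\Zmod{q})| \leq \delta$ for all such $\Lf$ simultaneously, provided $p,q \geq N_0(\delta,L)$.
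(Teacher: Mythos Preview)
Your overall architecture is right, and the way you propose to strip off $f_{\sml}$ and $f_{\unf}$ matches the paper (the paper does it in one step via the generalized von Neumann theorem and the $L^1$ bound, but your $3^t$ expansion works too). The real gap is in your transference step for the structured part.

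You write that ``the nilmanifold data $(G,\Gamma,F,g)$ defining $f_{\nil}$ do not involve $p$'' and that you can therefore ``transfer them to $\Zmod{q}$ by reinterpreting the same polynomial sequence $g$ modulo $q$.'' This is false: the polynomial $g$ produced by the periodic regularity lemma is $p$-periodic mod $\Gamma$, and that periodicity is the only reason $n\mapsto F(g(n)\Gamma)$ descends to a function on $\Zmod{p}$ at all. There is no reason whatever for the same $g$ to be $q$-periodic mod $\Gamma$, so ``$g$ modulo $q$'' is not defined, and simply restricting $n$ to $[q]$ does not give a function on $\Zmod{q}$ to which the periodic counting lemma applies. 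Concretely, the Taylor coefficients of $g$ are essentially $p$th-power roots mod $\Gamma$; they will not be $q$th-power roots.

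The paper's fix is not to make a single $g$ work for both moduli, but to \emph{construct a new} polynomial $h\in\poly(\Z,G_\bullet)$ that is $q$-periodic mod $\Gamma$, lives in the \emph{same} filtered nilmanifold, is sufficiently $A$-irrational, and satisfies $h(0)=g(0)$. This is Proposition~\ref{prop:periodic_irr}, and it is the nontrivial heart of the transference: one builds $h$ level by level through the filtration, at each stage choosing a $q$th root in $G_i$ that makes the relevant Taylor coefficient $A$-irrational (Lemma~\ref{lemma:irr_coset_qth_root}). With $g$ and $h$ both highly irrational in the same $(G/\Gamma,G_\bullet)$ and sharing $g(0)$, the periodic counting lemma gives the \emph{same} integral $\int_{g(0)^\Delta G^\Lf/\Gamma^\Lf} F^{\otimes t}$ on both sides, and setting $f'(n)=F(h(n)\Gamma)$ finishes the proof. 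Your sketch is missing exactly this construction.
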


In fact, both of these results hold for $p$ and $q$ positive integers as long as these do not have small prime factors, so the restriction of $N$ to prime moduli in our main applications can be relaxed somewhat; this is discussed in Section \ref{section:applications}.

To establish Theorem \ref{pertranfn}, we shall use results of Green and Tao \cite{GTarith} and Szegedy \cite{SzegedyHFA} in higher-order Fourier analysis. We shall in fact develop variants of some of these results for periodic nilsequences. One of the novelties in this setting lies in that certain periodic polynomial orbits on nilmanifolds equidistribute in a particularly nice way; this is the content of Proposition \ref{prop:full_fact}. We defer discussion of this to the relevant sections, however, once the appropriate terminology has been introduced.

The paper has the following outline. Section \ref{section:background} provides background on uniformity norms, nilmanifolds, and polynomial sequences. In Section \ref{section:inverse} we record an inverse theorem for the $U^d$ norm for functions on finite cyclic groups, in terms of \emph{periodic} nilsequences, which follows from the main results of Szegedy in \cite{SzegedyHFA}, and we state a corresponding \emph{regularity lemma} for such functions. In Sections \ref{section:counting} and \ref{section:regularity} we develop variants for the periodic setting of the irrational regularity lemmas and counting lemmas of Green and Tao \cite{GTarith}. With these in hand, we shall then need to construct a polynomial nilsequence with prescribed period and equidistribution properties; the construction is presented in Section \ref{section:construction}. In Section \ref{section:transference} the transference result is proved and the combinatorial applications above are finally given in Section \ref{section:applications}. We make some closing remarks in Section \ref{section:remarks}.\\

\textbf{Acknowledgements.} The authors are very grateful to Ben Green for initial conversations that inspired this work. The first-named author was funded in part by the EPSRC and the Fondation Sciences Math\'ematiques de Paris, and the second by the EPSRC and the G\"oran Gustafsson Foundation.

\section{Background notions}\label{section:background}

\subsection{Gowers uniformity norms}

One of the main tools used in this paper is an arithmetic regularity lemma, which decomposes an arbitrary bounded function on $\Zmod{N}$ as a sum of a structured part and some error terms. The sense in which one of these terms constitutes an error is that it is small in a particular \emph{uniformity norm}. These norms can be defined as follows.

\begin{defn}[Uniformity norms]
Let $G$ be a finite abelian group, let $d$ be a positive integer, and let $f : G \to \C$ be a function. We define
\[ \norm{f}_{U^d}^{2^d} := \E_{x \in G,\,h \in G^d} \prod_{\varepsilon \in \{0,1\}^d} \mathcal{C}^{\abs{\varepsilon}} f( x + \varepsilon \cdot h ), \]
where $\mathcal{C}$ is the complex-conjugation operator, $\abs{\varepsilon} = \varepsilon_1 + \cdots + \varepsilon_d$, and $\varepsilon \cdot h = \varepsilon_1 h_1 + \cdots + \varepsilon_d h_d$.
\end{defn}

These norms were introduced by Gowers \cite{GSz}. Their role in arithmetic combinatorics is by now well described in several sources; see for example \cite{GTlin,T-V}. Here we restrict the discussion to the following standard facts. First, these norms are nested: $\norm{f}_{U^d} \leq \norm{f}_{U^{d+1}}$ for any $d \geq 1$. Second, they can be used to control solution measures, in the following sense.

\begin{theorem}\label{thm:GvNdiag}
For any integer $L \geq 1$ there are integers $s = s(L)$ and $C_L$ such that if $\Lf$ is any system of integer linear forms of size at most $L$, any two of which are linearly independent, and $N$ is a positive integer with no prime factors less than $C_L$, then
\[ \big| \Sol_\Lf(f) - \Sol_\Lf(g) \big| \leq L\norm{f-g}_{U^{s+1}} \]
for any functions $f, g : \Zmod{N} \to [0,1]$.
\end{theorem}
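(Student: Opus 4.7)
The plan is to prove this by a standard generalized von Neumann theorem for systems of finite Cauchy-Schwarz complexity, combined with a telescoping identity.

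\textbf{Telescoping reduction.} I would first write
\[
\Sol_\Lf(f) - \Sol_\Lf(g) \;=\; \sum_{i=1}^t \E_{\mathbf{n}\in \Zmod{N}^D} \Big(\prod_{j<i} f(\lf_j(\mathbf{n}))\Big)(f-g)(\lf_i(\mathbf{n}))\Big(\prod_{j>i} g(\lf_j(\mathbf{n}))\Big),
\]
reducing the task to bounding each of the $t \leq L$ summands by $\|f-g\|_{U^{s+1}}$. Each summand is an average $\E_{\mathbf{n}} h_1(\lf_1(\mathbf{n}))\cdots h_t(\lf_t(\mathbf{n}))$ with all $h_j$ taking values in $[-1,1]$ and one fixed slot occupied by $f-g$.

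\textbf{Generalized von Neumann.} The main step is to show that for any system $\Lf$ of size at most $L$ whose forms are pairwise linearly independent, there is an integer $s=s(L)$ such that
\[
\Big|\E_{\mathbf{n}\in \Zmod{N}^D} h_1(\lf_1(\mathbf{n}))\cdots h_t(\lf_t(\mathbf{n}))\Big| \;\leq\; \min_{1\leq i \leq t} \|h_i\|_{U^{s+1}}
\]
for $1$-bounded functions $h_1,\ldots,h_t : \Zmod{N}\to\C$, provided $N$ has no prime factor below some threshold $C_L$. This would be established by iterated Cauchy-Schwarz in the style of Gowers and Green-Tao: one applies Cauchy-Schwarz $s+1$ times, each step doubling a variable and eliminating one of the other forms $\lf_j$, until the surviving expression matches exactly (up to an invertible affine change of variables) the cube-like average defining $\|h_i\|_{U^{s+1}}^{2^{s+1}}$. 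Pairwise linear independence guarantees that the \emph{Cauchy-Schwarz complexity} $s$ in the sense of \cite{GTlin} is finite, and the bounds on the number of forms and the size of the coefficients ensure that it depends only on $L$. Each required change of variables has denominator bounded by some explicit function of $L$; taking $C_L$ larger than all such denominators ensures invertibility in $\Zmod{N}$.

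\textbf{Conclusion and main obstacle.} Applying this bound with $h_i = f-g$ to each telescoped term and summing yields $|\Sol_\Lf(f)-\Sol_\Lf(g)| \leq L\|f-g\|_{U^{s+1}}$, as claimed. The principal technical difficulty lies in the bookkeeping of the iterated Cauchy-Schwarz argument: at each step one must choose the correct partition of the remaining forms witnessing their Cauchy-Schwarz complexity, so that after $s+1$ duplications every form other than $\lf_i$ has been eliminated and the resulting $2^{s+1}$-dimensional average is genuinely the Gowers norm rather than some more general expression. This argument is by now standard and is worked out in essentially this form in \cite{GTlin} and in the textbook treatment \cite{T-V}.
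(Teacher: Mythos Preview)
Your proposal is correct and matches the paper's treatment: the paper does not give its own proof but simply cites \cite{GTlin} and \cite{tao:HFA}, remarking that this is a standard generalized von Neumann theorem, and your telescoping plus iterated Cauchy--Schwarz outline is precisely the argument found in those references.
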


This result is tied to a family of results known in this area as generalized von Neumann theorems. The proof of this version is essentially contained in \cite{GTlin} and the result is also discussed in \cite{tao:HFA}. 
We note also the simple bound 
\begin{equation}\label{eqn:sol_L1}
\big| \Sol_\Lf(f) - \Sol_\Lf(g) \big| \leq L\norm{f-g}_{L_1} = L\, \E_{x\in \Zmod{N}} \abs{f(x)-g(x)}
\end{equation}
provided $N$ is prime to at least one coefficient of each form in $\Lf$.

\subsection{Nilmanifolds and polynomial sequences}\label{section:nilmanifolds}

This paper depends heavily on the work of Green and Tao \cite{GTarith,GTOrb} on the quantitative behaviour of polynomial nilsequences. In this subsection we review the basic notation and concepts involved, so as to set this paper in a workable context, but we omit several details, for which we refer the reader to \cite{GTarith,GTOrb}.

\begin{defn}[Filtrations]
Let $G$ be a group. We call a sequence $G_\bullet = (G_i)_{i \geq 0}$ of subgroups of $G$ a \emph{filtration} on $G$ of degree at most $s$ if
	\[ G = G_0 = G_1 \supset G_2 \supset \cdots \supset G_s \supset G_{s+1} = G_{s+2} = \cdots = \{ \id_G \} \]
and $[G_i,G_j] \subset G_{i+j}$ for all $i,j \geq 0$. Here $[g,h] := g h g^{-1} h^{-1}$ denotes the group commutator of $g,h \in G$ and $[A,B]$ denotes the subgroup of $G$ generated by 
	$\{ [a,b]: a \in A, b \in B \}$.
\end{defn}

\begin{defn}[Nilmanifolds]
If $G$ is a connected, simply-connected nilpotent Lie group and $\Gamma$ is a discrete, cocompact subgroup, we call $G/\Gamma$ a \emph{nilmanifold}. If $G_\bullet$ is a filtration on $G$ of degree at most $s$, and the $G_i$ are closed and connected with the subgroups $\Gamma_i := \Gamma \cap G_i$ cocompact in $G_i$, then we call the pair $(G/\Gamma, G_\bullet)$ a \emph{filtered nilmanifold} of degree at most $s$. We define the total dimension of such a nilmanifold to be the quantity $\sum_{i=0}^s \dim G_i$.
\end{defn}

Throughout the paper we shall write $m$ for the dimension of $G$ and $m_i$ for the dimension of $G_i$, whenever it is obvious from the context to which groups we are referring.

We also need the notion of a \emph{Mal'cev basis} for a filtered nilmanifold $(G/\Gamma, G_\bullet)$. This notion was introduced in \cite{Malcev}, and it is defined and discussed in Appendix \ref{appendix:Malcev} here. For now we note only a few salient facts. Such a basis provides a real-coordinate system on $G$ that is consistent with $\Gamma$ and $G_\bullet$, by means of the associated \emph{Mal'cev coordinate map} $\psi : G \to \R^m$, a diffeomorphism for which 
\begin{enumerate}
	\item $\psi(\Gamma) = \Z^m$,
	\item $\psi(G_i) = \{0\}^{m-m_i} \times \R^{m_i}$, and
	\item $\psi^{-1}([0, 1)^m)\subset G$ is a fundamental domain for $G/\Gamma$, that is for any $g\in G$ there exists a unique element of $\Gamma$, denoted $[g]$, such that the element $\{g\}:= g\, [g]^{-1}$ satisfies $\psi( \{g\} )\in [0,1)^m$.
\end{enumerate}
Thus an element of $G$ lies in $\Gamma$ if and only if all its coordinates are integers, and in $G_i$ if and only if its first $m-m_i$ coordinates are $0$.
These coordinates are useful in many ways, for example in classifying certain homomorphisms on $G$ and in defining a notion of distance on the nilmanifold (see Appendix \ref{appendix:Malcev}).

\begin{defn}[Complexity of a filtered nilmanifold]
	Let $(G/\Gamma, G_\bullet)$ be a filtered nilmanifold of degree at most $s$, and let $\X$ be a Mal'cev basis for $G/\Gamma$ adapted to $G_\bullet$. We say that $(G/\Gamma,G_\bullet,\X)$ has \emph{complexity} at most $M$ if $m$, $s$ and the rationality (see \cite[Definition 2.4]{GTOrb}) of $\X$ are all at most $M$. 
\end{defn}

In this paper a filtered nilmanifold will always come with a Mal'cev basis, but the basis may sometimes not be specified explicitly when it is clear from the context.

We also need the notion of a subnilmanifold. Recall that a rational number is said to have \emph{height} $M$ if it equals $a/b$ with $a,b$ coprime and $\max(\abs{a},\abs{b}) = M$. Recall also that a subgroup $G'$ of $G$ is said to be a \emph{rational subgroup} if $\Gamma\cap G'$ is a cocompact subgroup of $G'$ \cite{GTOrb}. We say that such a subgroup $G'$ is $M$-rational, or has complexity at most $M$ in $G$ (relative to $\X$), if the Lie algebra $\mathfrak{g'}$ of $G'$ is generated by linear combinations of the elements of $\X$ with rational coefficients of height at most $M$.

\begin{defn}[Subnilmanifolds]
	Given a filtered nilmanifold $(G/\Gamma,G_\bullet,\X)$ of degree at most $s$, a \emph{subnilmanifold of} $G/\Gamma$ \emph{of complexity at most} $M$ is a filtered nilmanifold $(G'/\Gamma',G_\bullet',\X')$ of complexity at most $M$ with each subgroup $G_i'$ in $G_\bullet'$ being a rational subgroup of $G_i$ of complexity at most $M$, where $\Gamma'= G'\cap\Gamma$, and where each element of the Mal'cev basis $\X'$ is a linear combination of elements of $\X$ with rational coefficients of height at most $M$.
\end{defn}

\begin{defn}[Polynomial sequences]
Given a filtration $G_\bullet$ of degree at most $s$ on a group $G$, we define $\poly(\Z, G_\bullet)$ to consist of all maps $g : \Z \to G$ such that
\[ \partial_{h_i} \cdots \partial_{h_1}g (n) \in G_i \quad \text{for all $i \geq 0$ and $h_1, \ldots, h_i, n \in \Z$,} \]
where $\partial_h$ is the difference operator given by $\partial_h g(n) := g(n+h)g(n)^{-1}$. We call any such map $g$ a \emph{polynomial sequence}, or simply a \emph{polynomial}. 
\end{defn}

A very useful and non-trivial fact about $\poly(\Z, G_\bullet)$ is that it forms a group under pointwise multiplication. This is referred to as the Lazard-Leibman theorem in \cite{GTarith}; see that paper and \cite{Leibman:poly_seqs, Leibman:poly_maps} for further details and references. We shall generally use this fact without mention. One also has quite a tangible description of polynomials via the following lemma.

\begin{lemma}[Taylor expansion]\label{lemma:Taylor}
Let $g \in \poly(\Z, G_\bullet)$, where $G_\bullet$ has degree at most $s$. Then there are unique \emph{Taylor coefficients} $g_i \in G_i$ such that
\[ g(n) = g_0 g_1^n g_2^{\binom{n}{2}} \cdots g_s^{\binom{n}{s}} \]
for all $n \in \Z$, and, conversely, every such expression represents a polynomial sequence $g \in \poly(\Z,G_\bullet)$. Moreover, if $H$ is a subgroup of $G$ and $g$ is $H$-valued then we have $g_i \in H$ for each $i$.\footnote{This final claim is of course essentially a generalization of the fact that a polynomial $p : \Z \to \R$ is integer-valued if and only if its Newton series $p(n) = a_0 + a_1 n + \cdots + a_s \binom{n}{s}$ has integer coefficients.}
\end{lemma}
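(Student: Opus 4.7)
My plan is to prove the existence and uniqueness of the Taylor coefficients by induction on the degree $s$ of the filtration. The crucial structural fact enabling the induction is that $G_s$ is central in $G$: since $[G,G_s]\subset G_{s+1}=\{\id_G\}$, we have $G_s\subset Z(G)$. This centrality both permits the passage to the quotient $G/G_s$ (on which the quotient filtration has degree at most $s-1$) and allows the residual ``top'' term, once extracted, to be treated abelianly and then recombined with the lifts of the lower-order coefficients without introducing any commutator errors.

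Before executing the induction it is convenient to verify the converse direction first, as it is used in the inductive step. For any $g_i\in G_i$, the sequence $n\mapsto g_i^{\binom{n}{i}}$ lies in $\poly(\Z,G_\bullet)$: since $g_i$ commutes with itself, the $j$-fold iterated discrete derivative has the form $g_i^{k}$ for some integer $k=k(n,h_1,\ldots,h_j)$, which lies in $G_i\subset G_j$ when $j\leq i$, while when $j>i$ one has $k\equiv 0$ (as $\binom{n}{i}$ is a polynomial in $n$ of degree $i$), so the derivative is $\id_G\in G_j$. By the Lazard--Leibman theorem, products of such monomial factors are again polynomial sequences, so any expression $g_0 g_1^n\cdots g_s^{\binom{n}{s}}$ with $g_i\in G_i$ belongs to $\poly(\Z,G_\bullet)$.

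For existence, the case $s=0$ is trivial since then $G=\{\id_G\}$. For the inductive step, I push $g$ forward to $\bar g\colon\Z\to G/G_s$; this $\bar g$ is polynomial with respect to the quotient filtration of degree at most $s-1$, so by the inductive hypothesis $\bar g(n)=\bar g_0\bar g_1^n\cdots\bar g_{s-1}^{\binom{n}{s-1}}$ with $\bar g_i\in G_i/G_s$. I pick arbitrary lifts $g_i\in G_i$; then by the converse direction above, $g_0 g_1^n\cdots g_{s-1}^{\binom{n}{s-1}}$ is polynomial, so $z(n):=g(n)\bigl(g_0 g_1^n\cdots g_{s-1}^{\binom{n}{s-1}}\bigr)^{-1}$ is also polynomial, and by construction it is $G_s$-valued. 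Since $G_s$ is central and abelian, the classical Newton-series expansion yields $z(n)=z_0 z_1^n\cdots z_s^{\binom{n}{s}}$ with $z_i\in G_s$; using centrality to commute each $z_i$ through the other factors, I absorb $z_i$ into $g_i$ for $i<s$ and take $g_s:=z_s$, obtaining the stated form.

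Uniqueness and the $H$-valued refinement both follow from a direct recursion. Evaluating $g(n)=g_0 g_1^n\cdots g_s^{\binom{n}{s}}$ at $n=0,1,\ldots,s$ and using $\binom{j}{i}=0$ for $j<i$ determines each $g_i$ as an explicit word in $g(0),\ldots,g(i)$ and the previously found $g_0,\ldots,g_{i-1}$, namely $g_0=g(0)$, $g_1=g_0^{-1}g(1)$, $g_2=g_1^{-2}g_0^{-1}g(2)$, and so on; this gives uniqueness, and if moreover $g$ is $H$-valued then all these words stay in $H$. I anticipate the main obstacle to be the bookkeeping around non-commutativity at each stage of the induction, which is precisely the role the centrality of $G_s$ plays: it is what lets the abelian Newton series handle the $G_s$-valued residual term $z(n)$, and what allows the correction factors $z_i$ to merge cleanly into the Taylor coefficients of $g$.
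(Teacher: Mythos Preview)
Your argument is correct, but it proceeds differently from the paper's. You induct on the degree $s$ of the filtration: pass to the quotient $G/G_s$ (degree $\leq s-1$), apply the inductive hypothesis there, lift, and then handle the $G_s$-valued residual $z(n)$ via the classical abelian Newton series, using centrality of $G_s$ to merge the corrections back into the Taylor coefficients. The paper instead inducts on the index $i$ of the coefficient: having found $g_0,\ldots,g_i$ with $g(n)\equiv g_0 g_1^n\cdots g_i^{\binom{n}{i}}\pmod{G_{i+1}}$ for all $n$, it sets $g_{i+1}:=(g_0 g_1^{i+1}\cdots g_i^{\binom{i+1}{i}})^{-1}g(i+1)\in G_{i+1}$ and then invokes the lemma that two polynomials with respect to a (pre)filtration of degree $\leq i+1$ agreeing at $0,1,\ldots,i+1$ agree everywhere, to upgrade the congruence to one $\pmod{G_{i+2}}$. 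The paper's route thus builds the $g_i$ directly from values via the same recursion you use for uniqueness, so existence, uniqueness, and the $H$-valued refinement all come out of a single computation; it also works verbatim for prefiltrations. Your route is more structural---reduce to lower degree, then fix the top abelianly---but relies on the converse direction and Lazard--Leibman as inputs to the induction, and requires you to treat uniqueness and the $H$-claim separately (which you do correctly, via the evaluation recursion).
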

\begin{proof}
Except for the final claim, this is contained in \cite[Lemma A.1]{GTarith}; we also give a proof of a slight generalization in Appendix \ref{appendix:Taylor}. Note that the $g_i$ may be found inductively by $g_0 := g(0)$, $g_1 := g_0^{-1} g(1)$, $g_j := (g_0 g_1^j \cdots g_{j-1}^{\binom{j}{j-1}})^{-1} g(j)$, from which the final claim is clear.
\end{proof}

\begin{defn}[Nilsequences]
	We call a function $f : \Z \to \C$ a (\emph{polynomial}) \emph{nilsequence} of degree at most $s$ and complexity at most $M$ if there is a nilmanifold $(G/\Gamma, G_\bullet, \X)$ of degree at most $s$ and complexity at most $M$, together with a polynomial $g \in \poly(\Z, G_\bullet)$ and a Lipschitz function $F : G/\Gamma \to \C$ with $\norm{F}_{\Lip(\X)} \leq M$, such that $f(n) = F(g(n)\Gamma)$ for all $n \in \Z$.
\end{defn}

The Lipschitz norm is defined here in terms of a metric $d_{G/\Gamma} = d_{G/\Gamma,\X}$ on $G/\Gamma$ by
\[ \norm{F}_{\Lip(\X)} := \norm{F}_\infty + \sup_{x\Gamma \neq y\Gamma \in G/\Gamma} \frac{ \abs{ F(x\Gamma) - F(y\Gamma) } }{ d_{G/\Gamma}(x\Gamma, y\Gamma) }. \]
The metric structure on $G/\Gamma$ comes from a metric $d_G = d_\X$ on $G$, defined to be the largest right-invariant metric on $G$ for which the distance from $x$ to the identity is at most $\norm{\psi(x)}_\infty$, i.e. is bounded by its largest coordinate in absolute value. The distance $d_{G/\Gamma}(x\Gamma, y\Gamma)$ is then defined to be the infimum of $d_\X(x',y')$ over all representatives $x' \in x\Gamma$, $y' \in y\Gamma$. See \cite[Definition 2.2]{GTOrb} for more details.

Finally, we need a definition for our periodic setting.

\begin{defn}
	Let $(G/\Gamma,G_\bullet)$ be a filtered nilmanifold, and let $N$ be a positive integer. We say a sequence $g \in \poly(\Z, G_\bullet)$ is \emph{$N$-periodic mod $\Gamma$} if $g(n+N)\Gamma = g(n)\Gamma$ for all $n \in \Z$. Occasionally we may drop the mention of the period and $\Gamma$, and simply refer to a polynomial as being periodic. We say a nilsequence $F(g(n)\Gamma)$ (or an orbit $(g(n)\Gamma)$) is \emph{$N$-periodic} if its associated polynomial $g$ is $N$-periodic mod $\Gamma$. Finally, we call an element $h\in G$ an \emph{$N$th root mod} $\Gamma$ if $g^N\in\Gamma$. 
\end{defn}

\section{A periodic inverse theorem}\label{section:inverse}

In recent years it has been a central objective in higher-order Fourier analysis to obtain a general result for the $U^d$ norms known as an inverse theorem. Roughly speaking, in one of its most useful forms this result should characterize a function on $[N]:=\{1,2,\ldots,N\}$ having non-trivial $U^d$ norm as one having non-trivial correlation with some $d-1$ step nilsequence of bounded complexity. Such a result was finally established by Green, Tao and Ziegler in \cite{GTZ}. An alternative approach to this inverse theorem was given by Szegedy in \cite{SzegedyHFA}, using the theory of nilspaces developed by Camarena and Szegedy in \cite{Cam-Szeg}, itself inspired by fundamental work of Host and Kra \cite{HK}. The main results in \cite{SzegedyHFA} yield an inverse theorem for functions on a finite cyclic group, involving \emph{periodic} nilsequences, which is crucial for this paper. (However, see \S \ref{subs:Manners} for an update concerning these matters.) To state the inverse theorem we use the following notion.

\begin{defn}\label{defn:char0}
Let $p_1(N)$ denote the smallest prime factor of a positive integer $N$ (with $p_1(1) := 1$). We say an infinite set $\mathcal{N}$ of positive integers has \emph{characteristic} 0 if $p_1(N)\to\infty$ as $N\to\infty$ through $\mathcal{N}$. We say a sequence of finite abelian groups $(A_i)_{i\in \N}$ of increasing size has characteristic 0 if $\{\abs{A_i}: i\in \N\}$ has characteristic 0.
\end{defn}

\begin{remark}
It is clear that $\mathcal{N}$ has characteristic 0 if and only if for any integer $n>1$, only finitely many $N$ in $\mathcal{N}$ are divisible by $n$. Thus a sequence $(A_i)_{i\in \N}$ of characteristic 0 as above forms a group-family of characteristic 0 in the sense of \cite{SzegedyHFA}.
\end{remark}
 
The version of the inverse theorem that we shall use is the following.

\begin{theorem}[Periodic inverse theorem]\label{thm:perinverse}
Let $\cN\subset \N$ be a set of characteristic $0$, let $s$ be a positive integer, and let $\delta > 0$. There exists $M>0$ such that if $\norm{f}_{U^{s+1}(\Zmod{N})} \geq \delta$ for some function $f : \Zmod{N} \to \C$ with $\norm{f}_\infty \leq 1$ and $N\in \cN$, then there exists an $N$-periodic polynomial nilsequence $h$ of degree at most $s$ and complexity at most $M$ such that $\E_{n\in \Zmod{N}} f(n) \overline{h(n)} \geq c_s(\delta)>0$.
\end{theorem}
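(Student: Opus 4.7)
The plan is to deduce this theorem essentially as a direct corollary of the main inverse theorem in Szegedy's paper \cite{SzegedyHFA}, once we have unpacked his conclusion into the classical polynomial-nilsequence language set up in Section \ref{section:nilmanifolds}. First, I would check that our hypothesis matches Szegedy's setting: since $\cN$ has characteristic $0$, the family of groups $(\Zmod{N})_{N \in \cN}$ is a group-family of characteristic $0$ in Szegedy's sense, as already noted in the remark following Definition \ref{defn:char0}.

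Next, I would apply Szegedy's inverse theorem for the $U^{s+1}$-norm. Given $f : \Zmod{N} \to \C$ with $\|f\|_\infty \leq 1$ and $\|f\|_{U^{s+1}} \geq \delta$, this produces a bounded-complexity degree-$s$ "nilspace polynomial" $\phi$ from $\Zmod{N}$ into a compact $s$-step nilspace, together with a Lipschitz function on that nilspace whose pullback through $\phi$ correlates with $f$ at level $\geq c_s(\delta) > 0$. Crucially, because $\phi$ is defined on the cyclic group $\Zmod{N}$ itself (rather than on a subinterval of $\Z$), its lift to $\Z$ is automatically $N$-periodic modulo the base point structure — this is precisely the feature of Szegedy's setting that is needed to obtain periodic nilsequences, as opposed to the Green--Tao--Ziegler inverse theorem over $[N]$ where periodicity is not immediate.

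The main task is then the translation step: to realise the bounded-complexity nilspace structure as a filtered nilmanifold $(G/\Gamma, G_\bullet, \X)$ of degree at most $s$ and complexity at most $M = M(\delta, s)$, and the nilspace polynomial $\phi$ as a genuine polynomial sequence $g \in \poly(\Z, G_\bullet)$, with the Lipschitz function on the nilspace giving a function $F : G/\Gamma \to \C$ satisfying $\|F\|_{\Lip(\X)} \leq M$. This translation uses the correspondence between compact finite-rank nilspaces and nilmanifolds established by Camarena and Szegedy in \cite{Cam-Szeg}. The $N$-periodicity of $\phi$ translates into $g(n+N)\Gamma = g(n)\Gamma$ for all $n \in \Z$, giving exactly the $N$-periodicity of the resulting nilsequence $h(n) := F(g(n)\Gamma)$ in the sense of the definition at the end of Section \ref{section:nilmanifolds}.

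The hard part I anticipate is precisely this translation step: Szegedy's nilspace framework and the classical Green--Tao filtered-nilmanifold framework are not phrased identically, and verifying that complexity, degree, periodicity, and the Lipschitz norm of $F$ all transfer with quantitative bounds depending only on $\delta$ and $s$ requires a careful reading of \cite{Cam-Szeg, SzegedyHFA}. Once this dictionary is in place, the conclusion of Theorem \ref{thm:perinverse} follows with $c_s(\delta)$ extracted from Szegedy's statement and $M$ determined by the complexity bounds of the nilmanifold produced.
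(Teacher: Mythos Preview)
Your proposal is correct and follows essentially the same approach as the paper, which simply records that the theorem follows from (the proof of) \cite[Theorem 10]{SzegedyHFA}, noting in particular that the $N$-periodicity of the underlying polynomial mod $\Gamma$ is visible in Szegedy's proof. In fact your outline is more detailed than the paper's own treatment, which does not spell out the nilspace-to-nilmanifold translation via \cite{Cam-Szeg} that you describe.
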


This theorem follows from (the proof of) \cite[Theorem 10]{SzegedyHFA}. (Note that from the proof of the latter theorem in \cite{SzegedyHFA} one indeed gets that the polynomial underlying the nilsequence $h$ is $N$-periodic mod $\Gamma$). 

By the same arguments as in \cite[Section 2]{GTarith}, using in particular that the sum or product of two $N$-periodic nilsequences is again an $N$-periodic nilsequence, we may deduce the following arithmetic regularity lemma.

\begin{theorembk}[Periodic, non-irrational arithmetic regularity lemma]\label{thm:periodic_regularity_non_irrational}
Let $\cN\subset \N$ be a set of characteristic $0$, let $s \geq 1$ be an integer, let $\epsilon > 0$, and let $\mathcal{F} : \R^+ \to \R^+$ be a growth function. There exists $M >0$ such that for any $N\in \cN$ and any function $f : \Zmod{N} \to [0,1]$ there is a decomposition
\[ f = f_\nil + f_\sml + f_\unf \]
of $f$ into functions $f_* : \Zmod{N} \to [-1,1]$ such that
\begin{enumerate}
\item $f_\nil$ is an $N$-periodic nilsequence of degree at most $s$ and complexity at most $M$,
\item $\norm{f_\sml}_2 \leq \epsilon$,
\item $\norm{f_\unf}_{U^{s+1}} \leq 1/\mathcal{F}(M)$, and
\item $f_\nil$ and $f_\nil + f_\sml$ take values in $[0,1]$. 
\end{enumerate}
\end{theorembk}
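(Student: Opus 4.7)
The plan is to run an energy-increment argument, following the proof of the arithmetic regularity lemma in \cite[Section 2]{GTarith} essentially verbatim but with the classical inverse theorem replaced throughout by its periodic counterpart (Theorem \ref{thm:perinverse}). The energy tracked will be $\norm{\tilde f_k}_2^2$, where $\tilde f_k := \E(f\,|\,\mathcal{B}_k)$ is the conditional expectation of $f$ onto a $\sigma$-algebra $\mathcal{B}_k$ generated by a growing finite collection of $N$-periodic polynomial nilsequences of degree at most $s$. Since $f:\Zmod{N}\to[0,1]$ this energy is bounded by $1$, and a suitable increment per step forces termination in a bounded number of steps.

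Concretely, I would iteratively build nilsequences $h_1, h_2, \ldots$, each $N$-periodic of degree at most $s$ and complexity $M_k$, and let $\mathcal{B}_k$ be generated by suitably discretized level sets of $h_1, \ldots, h_k$. If $\norm{f - \tilde f_k}_{U^{s+1}} \leq 1/\mathcal{F}(M_k)$, I halt. Otherwise Theorem \ref{thm:perinverse} applied to $f - \tilde f_k$ (bounded in modulus by $1$) produces an $N$-periodic polynomial nilsequence $h_{k+1}$ of degree at most $s$ and complexity bounded in terms of $\mathcal{F}(M_k)$, with $\abs{\E(f-\tilde f_k)\overline{h_{k+1}}} \geq c_s(1/\mathcal{F}(M_k))$. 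A standard orthogonality computation then gives a quantitative energy increment $\norm{\tilde f_{k+1}}_2^2 \geq \norm{\tilde f_k}_2^2 + c_s(1/\mathcal{F}(M_k))^2$, so the iteration halts at some stage $k \leq K(\epsilon, s, \mathcal{F})$, with final complexity bound $M$ depending only on $\epsilon$, $s$ and $\mathcal{F}$.

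To realise $\tilde f_k$ as an actual nilsequence I would view the tuple $n \mapsto (h_1(n), \ldots, h_k(n))$ as a polynomial orbit on the product filtered nilmanifold; this product orbit is itself $N$-periodic mod the product lattice because each factor is, which is exactly the closure observation flagged in the excerpt. The conditional expectation $\tilde f_k$ is a bounded measurable function of this orbit, which I would approximate in $L^2(\Zmod{N})$ to within $\epsilon$ by a Lipschitz function $F$ on the product nilmanifold, then compose with the truncation $t\mapsto \max(0,\min(1,t))$ to keep values in $[0,1]$ without worsening the error. Setting $f_{\nil}(n) := F(h_1(n),\ldots,h_k(n))$, $f_{\sml} := \tilde f_k - f_{\nil}$, and $f_{\unf} := f - \tilde f_k$ gives the desired decomposition: (i) holds by construction, (ii) by the $L^2$ approximation bound, (iii) by the halting condition, and (iv) because truncation puts $f_{\nil}$ in $[0,1]$ while $f_{\nil} + f_{\sml} = \tilde f_k \in [0,1]$ by positivity of conditional expectation.

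The main obstacle is not conceptual but quantitative: the complexities of the product nilmanifold, of the Lipschitz smoothing step, and the dependence between successive $M_k$ via the growth function $\mathcal{F}$ must all be tracked to extract a single final bound $M = M(\epsilon,s,\mathcal{F})$. This bookkeeping is exactly what is done in \cite[Section 2]{GTarith}, and it carries over to the periodic setting without modification, because $N$-periodicity is preserved under all the group-theoretic operations invoked there, in particular pointwise products of $N$-periodic polynomial sequences on a product filtered nilmanifold and smoothing by Lipschitz functions on such a nilmanifold.
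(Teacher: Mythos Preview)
Your proposal is correct and follows exactly the approach the paper indicates: run the energy-increment argument of \cite[Section 2]{GTarith} with the inverse theorem replaced by its periodic version (Theorem~\ref{thm:perinverse}), using that sums and products of $N$-periodic nilsequences remain $N$-periodic. One small correction: the final bound $M$ also depends on $\cN$ through Theorem~\ref{thm:perinverse}, not just on $\epsilon, s, \mathcal{F}$.
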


This regularity lemma essentially allows us to reduce the study of $\Sol_\Lf(f)$ to that of $\Sol_\Lf(f_\nil)$, this being useful since we have more structural information about $f_\nil$.
Much as noted in \cite{GTarith}, however, it turns out that we shall need stronger information still on $f_\nil$: we shall require the orbit underlying the nilsequence to be highly \emph{irrational}, a quantitative property which guarantees that certain higher-dimensional variants of the orbit are equidistributed. We develop the tools we need for this in the next section.

\section{Irrationality and the periodic counting lemma}\label{section:counting}

Using the regularity lemma, from a function $f$ on $\Zmod{N}$ we obtain a nilsequence $f_\nil(n) = F(g(n)\Gamma)$, where the polynomial sequence $g: \Z \to G$ is $N$-periodic mod $\Gamma$. In \cite{GTOrb}, Green and Tao developed powerful machinery to understand polynomial orbits quantitatively, and especially when such orbits are equidistributed. This machinery was built upon in \cite{GTarith}, where a notion of \emph{irrationality} was introduced that is useful for dealing with solution measures across linear forms. In particular, if a polynomial sequence $g$ is highly irrational, then $\Sol_\Lf(F(g(\cdot)\Gamma))$ is very close to a certain integral involving $F$ that is essentially independent of $g$. A generalization of this is what is called a \emph{counting lemma} in \cite{GTarith}. We shall use a slight weakening of this notion of irrationality. Before we define this formally, which will take some preparation, we state the corresponding counting lemma by way of motivation. For this we use the following notation: given a sequence $g : \Z \to G$ and a system $\Lf=(\lf_1,\ldots,\lf_t)$ of linear forms $\lf_i:\Z^D\to \Z$, we write
\[ g^\Lf(\mathbf{n}) := \big(g(\lf_1(\mathbf{n})), \ldots, g(\lf_t(\mathbf{n}))\big) \]
for $\mathbf{n} \in \Z^D$ (or $\mathbf{n} \in \Zmod{N}^D$), and we write $G^\Lf/\Gamma^\Lf \subset G^t/\Gamma^t$ for the so-called \emph{Leibman nilmanifold} associated with $(G/\Gamma, G_\bullet, \X)$ and $\Lf$.\footnote{This is defined in \cite{GTarith}; we do not need any information about it beyond its occurrence in Theorem \ref{thm:periodic_counting}.}

\begin{theorem}[Periodic counting lemma]\label{thm:periodic_counting}
Let $M,D,s,t$ be integers with $1\leq D,s,t\leq M$, and let $(G/\Gamma,G_\bullet,\X)$ be a filtered nilmanifold of degree at most $s$ and complexity at most $M$. Let $g\in \poly(\Z,G_\bullet)$ be $A$-irrational and $N$-periodic mod $\Gamma$, for some positive integer $N$. Let $\Lf$ be a system of linear forms $\lf_1,\ldots,\lf_t:\Z^D\to \Z$ with coefficients of magnitude at most $M$.
Then, for any Lipschitz function $F:(G/\Gamma)^t\to \C$ with $\norm{F}_{\Lip(\X^t)} \leq M$, one has
\begin{equation}\label{count-eqn_new}
	\E_{\mathbf{n}\in \Zmod{N}^D} F(g^{\Lf}(\mathbf{n})\Gamma^t) = \int_{g(0)^\Delta G^\Lf/\Gamma^\Lf} F \quad +\quad o_{A \to \infty;M}(1).
\end{equation}
\end{theorem}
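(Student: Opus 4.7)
The plan is to reduce the theorem to an equidistribution statement for the periodic polynomial sequence $\mathbf{n}\mapsto g^\Lf(\mathbf{n})\Gamma^t$ on the coset $g(0)^\Delta G^\Lf/\Gamma^\Lf$ of the Leibman nilmanifold, and then invoke the periodic equidistribution machinery promised in the introduction (notably Proposition \ref{prop:full_fact}). By the Lazard--Leibman theorem, the map $\mathbf{n}\mapsto g^\Lf(\mathbf{n})$ is a multi-parameter polynomial sequence on $G^t$ with respect to the Leibman filtration and takes values in the coset $g(0)^\Delta G^\Lf$. Since $g$ is $N$-periodic mod $\Gamma$ and the forms $\lf_i$ have integer coefficients, each coordinate $g\circ \lf_i$ is $N$-periodic mod $\Gamma$, so $g^\Lf$ descends to a well-defined map $\Zmod{N}^D\to (G/\Gamma)^t$ and the left-hand side of \eqref{count-eqn_new} is meaningful.

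The crucial step is to upgrade the $A$-irrationality of $g$ on $G$ to an $A'$-irrationality statement for $g^\Lf$ on $G^\Lf$, with $A'\to\infty$ as $A\to\infty$ uniformly in $M$. This should be a pullback argument in the spirit of \cite{GTarith}: any non-trivial horizontal character on $G^\Lf$ that is small on the orbit of $g^\Lf$ restricts, through the linear forms $\lf_i$, to a horizontal character on $G$ of complexity bounded by a function of $M$; the smallness on the orbit of $g^\Lf$ would then force non-triviality to contradict the $A$-irrationality of $g$. This requires identifying the horizontal characters on the Leibman group $G^\Lf$ (for which the pairwise independence hypothesis on $\Lf$ is exactly what ensures the right structure), and checking that their pullbacks have controlled complexity with respect to the Mal'cev basis $\X^t$.

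Once sufficient irrationality of $g^\Lf$ is in place, the periodic equidistribution result to be developed in the paper should provide, for any $\varepsilon>0$ and $A$ large enough in terms of $M$ and $\varepsilon$, an approximation of the empirical measure $N^{-D}\sum_{\mathbf{n}\in\Zmod{N}^D}\delta_{g^\Lf(\mathbf{n})\Gamma^t}$ by the Haar measure on $g(0)^\Delta G^\Lf/\Gamma^\Lf$, tested against Lipschitz functions of bounded norm. Integrating $F$ against both measures and using $\norm{F}_{\Lip(\X^t)}\leq M$ then yields an error of $o_{A\to\infty;M}(1)$, as required.

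The main obstacle will be the equidistribution step itself: whereas the interval counterpart in \cite{GTOrb,GTarith} proceeds via a smooth/rational/equidistributed factorization, the periodic setting seems to call for the cleaner decomposition alluded to in the introduction, which must come with quantitative control in $A$ and $M$. A secondary burden is tracking the complexity of $G^\Lf$ as a subnilmanifold of $G^t$ with a Mal'cev basis inherited from $\X^t$; this complexity must remain $O_M(1)$ for the Lipschitz approximation step to give a final bound that depends only on $M$ and tends to zero as $A\to\infty$.
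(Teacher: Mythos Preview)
Your proposal outlines a direct attack: transfer irrationality from $g$ to $g^\Lf$ on the Leibman group, then invoke a yet-to-be-proved periodic equidistribution theorem. This amounts to \emph{reproving} the Green--Tao counting lemma in the periodic setting, and it is far more laborious than what the paper actually does.

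The paper's proof is a five-line reduction to \cite[Theorem~1.11]{GTarith} itself, exploiting a single observation you have missed: by definition, $A$-irrationality of $g$ means $(A,N')$-irrationality for \emph{some} $N'$, and in fact $g$ is $(A,kN)$-irrational for every sufficiently large integer $k$ (the condition $\|\xi_i(g_i)\|_{\R/\Z}\geq A/(kN)^i$ is eventually implied by $\xi_i(g_i)\notin\Z$). One therefore applies Green--Tao's counting lemma directly on the box $[kN]^D$, obtaining
\[
\E_{\mathbf{n}\in [kN]^D} F(g^{\Lf}(\mathbf{n})\Gamma^t) = \int_{g(0)^\Delta G^\Lf/\Gamma^\Lf} F \;+\; o_{A \to \infty;M}(1) + o_{kN \to \infty; M}(1).
\]
The $N$-periodicity of $g$ mod $\Gamma$ makes the left-hand side equal to the average over $[N]^D$ (decompose $[kN]^D = \{0,N,\ldots,(k-1)N\}^D \oplus [N]^D$), and letting $k\to\infty$ annihilates the second error term. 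That is the entire argument: the $N$-dependence in Green--Tao's error is removed by dilating the box and exploiting periodicity, not by building new equidistribution machinery.

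Your reference to Proposition~\ref{prop:full_fact} is also misplaced: that result is a \emph{factorization} (irrational times $\Gamma$-valued), not an equidistribution theorem, and in the paper's logical order it comes \emph{after} Theorem~\ref{thm:periodic_counting}, serving to strengthen the regularity lemma rather than to feed the counting lemma. Your pullback-of-characters sketch is essentially the internal content of Green--Tao's own proof of \cite[Theorem~1.11]{GTarith}; carrying it out would duplicate that work rather than leverage it as a black box. (You also invoke a pairwise-independence hypothesis on $\Lf$ that the statement of Theorem~\ref{thm:periodic_counting} does not carry.)
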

Here, as in \cite{GTarith}, $g(0)^\Delta$ denotes the element $(g(0), \ldots , g(0))\in G^t$, and the integral is with respect to the normalized Haar measure on the coset $g(0)^\Delta G^\Lf/\Gamma^\Lf$. When $F$ has the form $F(x_1,\ldots,x_t) = F'(x_1) \cdots F'(x_t)$, the left-hand side of \eqref{count-eqn_new} is $\Sol_\Lf(F'(g(\cdot)\Gamma))$. The norm $\norm{F}_{\Lip(\X^t)}$ is then easily related to $\norm{F'}_\X$, using the basis $\X^t$ for $(G/\Gamma)^t$ implicit in the above theorem; this is all detailed in Appendix \ref{appendix:Malcev}.

Theorem \ref{thm:periodic_counting} is an adaptation of \cite[Theorem 1.11]{GTarith} to periodic orbits. While the periodicity assumption is not present in \cite{GTarith}, our assumption of $A$-irrationality is somewhat weaker than that of $(A,N)$-irrationality used in that paper (see below), and the error term in \eqref{count-eqn_new} is independent of $N$. We shall deduce the above theorem from \cite[Theorem 1.11]{GTarith}, but first we lay the groundwork for the definition of irrationality. 
Much of this groundwork follows \cite{GTarith}, but we include the details as these are important for later results. Recall that $\Gamma_i := \Gamma \cap G_i$.

\begin{defn}[$i$th-level characters]
Let $(G/\Gamma, G_\bullet)$ be a filtered nilmanifold of degree at most $s$. 
We write $\Ghor_i$ for the group generated by $G_{i+1}$ and $[G_j,G_{i-j}]$ for $0 \leq j \leq i$. 
An \emph{$i$th-level character} is a continuous homomorphism from $G_i$ to $\R$ which vanishes on $\Ghor_i$ and is $\Z$-valued on $\Gamma_i$. We say that such a character is \emph{non-trivial} if it is non-constant.
\end{defn}

\begin{remark}
Note that $\Ghor_i$ is contained in $G_i$ by the filtration property, and is a normal subgroup of $G$ since each $G_j$ is. Observe that for the lower central series filtration, these concepts are only interesting for $i = 1$, as for this filtration we have $\Ghor_i \supset [G_1, G_{i-1}] = G_i$ for $i \geq 2$. Note also that 1st-level characters are precisely the (lifts of) horizontal characters in the sense of \cite{GTOrb}. In \cite{GTarith} $i$th-level characters are called $i$-horizontal characters.
\end{remark}

We next recall the notion of complexity for $i$th-level characters, which is defined in terms of Mal'cev bases (see Appendix \ref{appendix:Malcev} for the definition of these and their corresponding coordinate maps). Given a nilmanifold $(G/\Gamma, G_\bullet, \X)$, we write $\psi_i$ for the restriction of the Mal'cev coordinate map $\psi: G \to \R^m$ to the $i$th-level part of $G$, that is $\psi_i : G_i \to \R^{r_i}$ is $\psi$ composed with the projection to coordinates indexed between $m-m_i+1$ and $m-m_{i+1}$, where, as usual, $m = \dim{G}$ and $m_i = \dim{G_i}$, and $r_i := m_i - m_{i+1}$. The following lemma describing $i$th-level characters using this map is straightforward to verify.

\begin{lemma}[Frequency vector]\label{lemma:frequency_vector}
With the notation above, any $i$th-level character $\xi_i : G_i \to \R$ has the form $\xi_i(g)=k\cdot \psi_i(g)$ for some $k \in \Z^{r_i}$.
\end{lemma}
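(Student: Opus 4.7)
The plan is to factor $\xi_i$ through the abelian quotient $G_i/G_{i+1}$, identify this quotient with $(\R^{r_i},+)$ via the Mal'cev coordinate map, and then use the $\Z$-valued condition on $\Gamma_i$ to force integrality of the frequency vector. First, since $G_{i+1}$ is one of the generators of $\Ghor_i$, the vanishing of $\xi_i$ on $\Ghor_i$ implies vanishing on $G_{i+1}$, so $\xi_i$ descends to a continuous group homomorphism $\bar{\xi}_i : G_i/G_{i+1} \to \R$ on the abelian quotient.

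Next I would identify $G_i/G_{i+1}$ with $(\R^{r_i},+)$ via $\psi_i$. The fact that $\X$ is a Mal'cev basis adapted to $G_\bullet$ gives $\psi(G_i) = \{0\}^{m-m_i}\times \R^{m_i}$ and $\psi(G_{i+1}) = \{0\}^{m-m_{i+1}}\times \R^{m_{i+1}}$, so $\psi_i$ annihilates $G_{i+1}$ and descends to an injection of the quotient into $\R^{r_i}$. Using the polynomial group-law formula in Mal'cev coordinates (as developed in Appendix \ref{appendix:Malcev}), one verifies that for $g,h \in G_i$ the coordinates of $gh$ in positions $m-m_i+1,\ldots,m-m_{i+1}$ are simply $\psi_i(g)+\psi_i(h)$, up to corrections coming from commutators that by the filtration property $[G_i,G_i]\subset G_{2i}\subset G_{i+1}$ lie in $G_{i+1}$ and therefore have zero image under $\psi_i$. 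Together with dimension counting, this yields a Lie group isomorphism $G_i/G_{i+1} \cong (\R^{r_i},+)$. Since any continuous homomorphism $(\R^{r_i},+)\to \R$ has the form $v \mapsto k\cdot v$ for some $k \in \R^{r_i}$, pulling back through this isomorphism gives $\xi_i(g) = k\cdot \psi_i(g)$ for all $g\in G_i$.

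Finally, to pin down $k \in \Z^{r_i}$, I would invoke the Mal'cev basis property $\psi(\Gamma)=\Z^m$, which combined with the filtration adaptation yields $\psi_i(\Gamma_i) = \Z^{r_i}$. The requirement that $\xi_i$ be $\Z$-valued on $\Gamma_i$ then translates to $k\cdot v \in \Z$ for every $v\in \Z^{r_i}$, which is equivalent to $k\in \Z^{r_i}$. The main technical point is the identification $G_i/G_{i+1}\cong (\R^{r_i},+)$ via $\psi_i$, which is where the adaptation of the Mal'cev basis to $G_\bullet$ does the real work; once this is in hand, the rest of the argument reduces to the standard parametrisation of characters of $\R^{r_i}$ together with the identity $\psi_i(\Gamma_i) = \Z^{r_i}$.
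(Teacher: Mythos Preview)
Your argument is correct and is precisely the verification the paper has in mind; the paper does not give a proof at all, stating only that the lemma ``is straightforward to verify''. Your three steps---factoring through $G_i/G_{i+1}$, identifying this quotient with $(\R^{r_i},+)$ via $\psi_i$, and using $\psi_i(\Gamma_i)=\Z^{r_i}$ to force $k\in\Z^{r_i}$---constitute exactly that straightforward verification.
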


\begin{defn}[Complexity]
We define the \emph{complexity} of an $i$th-level character $\xi_i$ (relative to $\X$) to be $\abs{k_1}+\cdots +\abs{k_{r_i}}$ for the corresponding $k\in \Z^{r_i}$.
\end{defn}

We can now recall the definition of $(A,N)$-irrationality from \cite{GTarith}.

\begin{defn}[$(A,N)$-irrationality]\label{defn:ANirrational}
Let $(G/\Gamma, G_\bullet, \X)$ be a filtered nilmanifold of degree at most $s$, and let $A,N>0$. An element $g_i \in G_i$ is \emph{$(A,N)$-irrational} if for every non-trivial $i$th-level character $\xi_i$ of complexity at most $A$ we have $\|\xi_i(g_i) \|_{\R/\Z}\geq A/N^i$. A polynomial $g \in \poly(\Z,G_\bullet)$ is \emph{$(A,N)$-irrational} if its Taylor coefficient $g_i$ is $(A,N)$-irrational for each $i \in [s]$.
\end{defn}

The main definition of this section, then, is the following variant. 

\begin{defn}[$A$-irrationality]\label{defn:irrational}
Let $(G/\Gamma, G_\bullet, \X)$ be a filtered nilmanifold of degree at most $s$. 
 We say that $g_i \in G_i$ is \emph{$A$-irrational} if for every non-trivial $i$th-level character $\xi_i$ of complexity at most $A$ we have $\xi_i(g_i) \notin \Z$. We say that a polynomial $g \in \poly(\Z,G_\bullet)$ is \emph{$A$-irrational} if its Taylor coefficient $g_i$ is $A$-irrational for each $i \in [s]$. In other words, a polynomial is $A$-irrational if and only if it is $(A,N)$-irrational for some $N$.
\end{defn}

Thus, requiring $g$ to be irrational amounts to requiring the Mal'cev coordinates of each of its Taylor coefficients not to satisfy any linear equation (mod $1$) with small integer coefficients. This variant is relevant in particular for periodic polynomials, as we shall now see by finally deducing our periodic counting lemma rather simply from the counting lemma of Green and Tao.\footnote{Although it may seem strange to talk about sequences that are both periodic and irrational, note that we only use these terms in the quantitative sense; a polynomial sequence can indeed be $A$-irrational and $N$-periodic mod $\Gamma$ as long as $N$ is sufficiently large in terms of $A$.}

\begin{proof}[Proof of Theorem \ref{thm:periodic_counting}]
Restrict \cite[Theorem 1.11]{GTarith} to $P = [N]^D$ and divide both sides of the conclusion by $N^D$ to get that for any $(A,N)$-irrational polynomial $h : \Z \to G$ with $h(0) = \id_G$ we have
	\[ \E_{\mathbf{n}\in [N]^D} F(h^{\Lf}(\mathbf{n})\Gamma^t) = \int_{g(0)^\Delta G^\Lf/\Gamma^\Lf} F \quad +\quad o_{A \to \infty;M}(1) + o_{N \to \infty; M}(1). \]
Now, by assumption the polynomial $g$ given to us is $(A,kN)$-irrational for all large enough integers $k$, and so 
	\begin{equation}\label{multiple_counting} \E_{\mathbf{n}\in [kN]^D} F(g^{\Lf}(\mathbf{n})\Gamma^t) = \int_{g(0)^\Delta G^\Lf/\Gamma^\Lf} F \quad +\quad o_{A \to \infty;M}(1) + o_{kN \to \infty; M}(1). \end{equation}
Decomposing $[kN]^D = \{0,N,\ldots,(k-1)N\}^D \oplus [N]^D$, the left-hand side of \eqref{multiple_counting} is
\[ \E_{\mathbf{n_0} \in \{0,N,\ldots,(k-1)N\}^D} \E_{\mathbf{n}\in [N]^D} F(g^{\Lf}(\mathbf{n}+\mathbf{n_0})\Gamma^t) = \E_{\mathbf{n}\in [N]^D} F(g^{\Lf}(\mathbf{n})\Gamma^t), \]
the last equality being a consequence of the periodicity assumption on $(g(n)\Gamma)$. Letting $k$ tend to infinity in \eqref{multiple_counting}, we then obtain \eqref{count-eqn_new}.
\end{proof}

\section{A factorization theorem and a strengthened regularity lemma}\label{section:regularity}

Given the relevance of Theorem \ref{thm:periodic_counting} to solution measures of nilsequences, we now aim to strengthen our regularity lemma by adding the property of irrationality to the structured part. A similarly strengthened regularity lemma, in which the structured part takes the form of a so-called \emph{virtual} nilsequence, was \cite[Theorem 1.2]{GTarith}, one of the main results of that paper. In this section we show that in the periodic setting, when the period is prime, or more generally when it belongs to a given set of characteristic 0, we can do away with this notion of virtual nilsequences, obtaining the following result.

\begin{theorembk}[Periodic regularity lemma with irrational nilsequence\footnote{See also \S\ref{subs:Manners} for an update.}]\label{thm:periodic_regularity}
Let $\cN\subset \N$ be a set of characteristic $0$, let $s \geq 1$ be an integer, let $\epsilon > 0$, and let $\mathcal{F} : \R^+ \to \R^+$ be a growth function. Then there is a number $M = O_{s,\epsilon,\mathcal{F},\cN}(1)$ such that for any $N\in \cN$ with $p_1(N) \geq N_0(s, \epsilon, \mathcal{F}, \cN)$ and any function $f : \Zmod{N} \to [0,1]$ there is a decomposition
\[ f = f_\nil + f_\sml + f_\unf \]
of $f$ into functions $f_* : \Zmod{N} \to [-1,1]$ such that
\begin{enumerate}
\item $f_\nil$ is an $N$-periodic, $\mathcal{F}(M)$-irrational nilsequence of degree at most $s$ and complexity at most $M$,
\item $\norm{f_\sml}_2 \leq \epsilon$,
\item $\norm{f_\unf}_{U^{s+1}} \leq 1/\mathcal{F}(M)$, and\label{reg:unf}
\item $f_\nil$ and $f_\nil + f_\sml$ take values in $[0,1]$. 
\end{enumerate}
\end{theorembk}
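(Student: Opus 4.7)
The plan is to adapt the argument of Green and Tao for \cite[Theorem 1.2]{GTarith}, upgrading Theorem~\ref{thm:periodic_regularity_non_irrational} by absorbing obstructions to irrationality into the smooth and small parts, with the characteristic 0 hypothesis serving to eliminate any need for \emph{virtual} structured parts. First, I apply Theorem~\ref{thm:periodic_regularity_non_irrational} with a growth function $\mathcal{F}_0$ chosen sufficiently rapid in terms of $\mathcal{F}$ and the eventual complexity bound $M$, obtaining an initial decomposition $f = f_\nil^{(0)} + f_\sml^{(0)} + f_\unf^{(0)}$ with $f_\nil^{(0)}(n) = F^{(0)}(g^{(0)}(n)\Gamma^{(0)})$, where $g^{(0)}$ is $N$-periodic mod $\Gamma^{(0)}$ and $F^{(0)}$ is a Lipschitz function on a filtered nilmanifold of complexity $O(M_0)$.

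The crux of the argument is a factorization theorem for $N$-periodic polynomial sequences, namely Proposition~\ref{prop:full_fact}. It produces a decomposition $g^{(0)} = \varepsilon \, g'$ in which $\varepsilon$ is slowly varying in the Lipschitz sense, while $g'$ is $\mathcal{F}(M)$-irrational, $N$-periodic mod $\Gamma'$, and takes values in a rational subnilmanifold $(G'/\Gamma', G_\bullet', \X')$ of bounded complexity. The feature specific to the periodic characteristic 0 setting is the absence of any rational correction factor $\gamma$ of small bounded period: in the unrestricted factorization of \cite{GTOrb,GTarith} such a $\gamma$ is in general needed and forces one to pass to virtual nilsequences, but here any such $\gamma$ would have period coprime to $N$ by the characteristic 0 hypothesis, and Proposition~\ref{prop:full_fact} absorbs it into the other factors without loss.

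Once the factorization is in place, I absorb the smooth factor $\varepsilon$ into the Lipschitz function, with the resulting approximation error going into the small-error term $f_\sml$. The standard approach is to partition $[N]$ into intervals on which $\varepsilon$ varies by at most $\epsilon/\norm{F^{(0)}}_{\Lip}$, and on each such interval to replace $F^{(0)}(\varepsilon(n) g'(n)\Gamma^{(0)})$ by $F^{(0)}(\varepsilon(n_j) g'(n)\Gamma^{(0)}) = \tilde F_j(g'(n)\Gamma')$, where $\tilde F_j$ is the Lipschitz function on $G'/\Gamma'$ obtained by composing $F^{(0)}$ with left-translation by $\varepsilon(n_j)$ and restriction to $G'$. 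To combine these pieces into a single $N$-periodic nilsequence rather than a piecewise one, I use that $\varepsilon$ itself is $N$-periodic (being the quotient of the two periodic sequences $g^{(0)}$ and $g'$), and pass if necessary to a slightly enlarged nilmanifold that encodes $\varepsilon$ as an additional coordinate, of complexity still bounded by $M$.

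The main obstacle is Step~2: establishing Proposition~\ref{prop:full_fact}. The argument will proceed by iterative descent, level by level: if $g^{(0)}$ fails to be $\mathcal{F}(M)$-irrational, some non-trivial $i$th-level character $\xi_i$ of bounded complexity witnesses the failure, and one uses $\xi_i$ to split off a smooth part and descend to a codimension-one rational subgroup of $G_i$. The delicate point is ensuring that the descent does not require introducing a rational corrector $\gamma$, which is precisely where the large prime factor condition on $N$ is used: any such $\gamma$ would have bounded period coprime to $N$, which is incompatible with the $N$-periodicity of the orbit unless $\gamma$ can be trivialized modulo the smooth and irrational parts. Iterating this descent until no obstruction remains yields the clean factorization required.
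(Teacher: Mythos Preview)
You have the right overall architecture --- apply Theorem~\ref{thm:periodic_regularity_non_irrational}, then use Proposition~\ref{prop:full_fact} to upgrade to an irrational polynomial --- but you have misread the content of Proposition~\ref{prop:full_fact}, and this causes your Step~3 to be both unnecessary and not clearly workable. In that proposition the factor $\varepsilon$ is not ``slowly varying in the Lipschitz sense'': it is the \emph{constant} $\{g(0)\}$. Moreover, the rational part $\gamma$ is not absent; rather, it is genuinely $\Gamma$-valued (not merely of bounded period), so that $g(n)\Gamma = \varepsilon\, g'(n)\,\gamma(n)\,\Gamma = \varepsilon\, g'(n)\,\Gamma$ identically. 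This is precisely the payoff of the characteristic-$0$ hypothesis that you allude to, but you have not exploited it.

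Once one sees that $\varepsilon$ is constant, the remainder of the proof is immediate: define $F(x\Gamma') := F_0(\varepsilon x \Gamma)$ on the subnilmanifold $G'/\Gamma'$, check that $\|F\|_{\Lip(\X')} = O_{M_1}(1)$ via the standard metric comparison lemmas, and observe that $f_\nil(n) = F_0(g(n)\Gamma) = F(g'(n)\Gamma')$ exactly, with no error term at all. There is no need to partition $[N]$ into intervals, no piecewise construction, and no enlargement of the nilmanifold to ``encode $\varepsilon$ as an additional coordinate''. Your proposed Step~3 is essentially the mechanism that in \cite{GTarith} forces one into virtual nilsequences, and your suggestion for how to avoid that outcome (that $\varepsilon$ is itself $N$-periodic and can be folded into an enlarged nilmanifold) is vague and not obviously correct --- but the point is moot, since the periodic factorization already hands you a constant $\varepsilon$.
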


The key to this strengthening of the regularity lemma is the following equidistribution result for prime-periodic orbits, which should be compared to \cite[Lemma 2.10]{GTarith}. 

\begin{propositionbk}[Factorization of periodic polynomials]\label{prop:full_fact}
Let $(G/\Gamma,G_\bullet, \X)$ be a filtered nilmanifold of degree at most $s$ and complexity at most $M_0$, and let $\mathcal{F}$ be a growth function. Let $g \in \poly(\Z,G_\bullet)$ satisfy $g(0)=\id_G$ and $g(q\Z)\subset \Gamma$, where $p_1(q) > O_{M_0,\mathcal{F}}(1)$. Then for some $M\in [M_0,O_{M_0,\mathcal{F}}(1)]$ there is a factorization $g= g'\,\gamma$ with the following properties.
\begin{enumerate}
	\item $g'\in \poly(\Z,G'_\bullet)$ is $\mathcal{F}(M)$-irrational, for a subnilmanifold $(G'/\Gamma',G'_\bullet, \X')$ of $G/\Gamma$ of complexity $O_M(1)$, $g'(0)=\id_G$, and $g'(q\Z)\subset \Gamma'$.
\item $\gamma \in \poly(\Z,G_\bullet)$ is $\Gamma$-valued.
\end{enumerate}
In particular, if $g$ is $q$-periodic mod $\Gamma$ then we have $g=\varepsilon\, g'\,\gamma$, where $\varepsilon=\{g(0)\}$ is a constant, $g'$ and $\gamma$ have the above properties and furthermore $g'$ is $q$-periodic mod $\Gamma'$.
\end{propositionbk}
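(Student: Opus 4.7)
The approach is an iterative factorization, following the blueprint of Lemma~2.10 in \cite{GTarith} but strengthened to produce a genuinely $\Gamma$-valued factor rather than a merely $M$-rational one. Starting from $g^{(0)} := g$ on the filtered nilmanifold $(G^{(0)}/\Gamma^{(0)}, G_\bullet^{(0)}, \X^{(0)}) := (G/\Gamma, G_\bullet, \X)$ of initial complexity $M_0$, I construct a sequence of polynomials $g^{(j)} \in \poly(\Z, G_\bullet^{(j)})$ on a descending chain of subnilmanifolds, maintaining the invariants $g^{(j)}(0) = \id_G$ and $g^{(j)}(q\Z) \subset \Gamma^{(j)}$. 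At each stage either $g^{(j)}$ is already $\mathcal{F}(M_j)$-irrational, in which case I set $g' := g^{(j)}$, or the total dimension $\sum_i \dim G_i^{(j)}$ strictly decreases by peeling off a $\Gamma$-valued factor $\gamma^{(j)}$. Since this total dimension is bounded, the procedure terminates after $O(m)$ steps, the complexities grow by factors controlled by $\mathcal{F}$ yielding a final $M = O_{M_0, \mathcal{F}}(1)$, and the telescoped product $\gamma := \gamma^{(J-1)} \cdots \gamma^{(0)}$ serves as the $\Gamma$-valued part of the factorization.

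\textbf{The inner step.} Suppose $g^{(j)}$ fails $\mathcal{F}(M_j)$-irrationality, witnessed by a level $i \in [s]$ and a non-trivial $i$th-level character $\xi_i$ of complexity at most $\mathcal{F}(M_j)$ on $G^{(j)}_i$ with $\xi_i(g^{(j)}_i) \in \Z$, where $g^{(j)}_i$ is the $i$th Taylor coefficient. Dividing $\xi_i$ by the gcd of its Mal'cev coefficient vector (which does not increase complexity), I may assume this vector is primitive, so that $\xi_i(\Gamma^{(j)}_i) = \Z$. Pick $\gamma_i \in \Gamma^{(j)}_i$ with $\xi_i(\gamma_i) = \xi_i(g^{(j)}_i)$, set $\gamma^{(j)}(n) := \gamma_i^{\binom{n}{i}}$ (a $\Gamma^{(j)}$-valued polynomial in $\poly(\Z, G^{(j)}_\bullet)$), and define $g^{(j+1)} := g^{(j)} (\gamma^{(j)})^{-1}$. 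By Lazard--Leibman, $g^{(j+1)} \in \poly(\Z, G_\bullet^{(j)})$, and a direct calculation from the Taylor expansion (Lemma~\ref{lemma:Taylor}) gives $(g^{(j+1)})_i = g^{(j)}_i \gamma_i^{-1} \in \ker \xi_i$, with Taylor coefficients at levels $k < i$ unchanged. Setting $G^{(j+1)}_i := \ker \xi_i$ and $G^{(j+1)}_k := G^{(j)}_k$ for $k \neq i$ gives a valid filtration (the commutators among the unchanged levels lie in $\Ghor_i \subset \ker \xi_i$) of a rational subnilmanifold of $G^{(j)}/\Gamma^{(j)}$ of complexity $O_{M_j}(1)$. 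The invariants $g^{(j+1)}(0) = \id_G$ and $g^{(j+1)}(q\Z) \subset \Gamma^{(j+1)}$ then follow immediately from the $\Gamma$-valuedness of $\gamma^{(j)}$ on $q\Z$ and the fact that $g^{(j+1)}$ is $G^{(j+1)}$-valued.

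\textbf{The main obstacle.} The delicate point -- and the one where the characteristic-$0$ hypothesis $p_1(q) > O_{M_0, \mathcal{F}}(1)$ enters crucially -- is the normalisation of $\xi_i$ to have primitive coefficient vector. Writing $\xi_i = d \, \xi_i^*$ with $\xi_i^*$ primitive and $d \in \Z_{\geq 1}$ bounded by $\mathcal{F}(M_j)$, the hypothesis $\xi_i(g^{(j)}_i) \in \Z$ only a priori gives $\xi_i^*(g^{(j)}_i) \in \tfrac{1}{d}\Z$, not the $\xi_i^*(g^{(j)}_i) \in \Z$ needed to extract $\gamma_i \in \Gamma^{(j)}_i$. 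The resolution uses the periodicity: expanding in Mal'cev coordinates, the condition $g^{(j)}(q\Z) \subset \Gamma^{(j)}$ forces the Mal'cev coordinates of $g^{(j)}(qk)$ to be integers for every $k$, and a straightforward polynomial-manipulation argument then yields $\xi_i^*(g^{(j)}_i) \in \tfrac{1}{q^i}\Z$ for any primitive $i$th-level character $\xi_i^*$. Since $d \leq \mathcal{F}(M_j) < p_1(q)$ forces $\gcd(d, q^i) = 1$, the intersection $\tfrac{1}{d}\Z \cap \tfrac{1}{q^i}\Z = \Z$ gives $\xi_i^*(g^{(j)}_i) \in \Z$, as required. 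This is precisely the step that upgrades the GTarith factorization to a $\Gamma$-valued one.

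\textbf{Handling the final assertion.} For the last sentence, assume $g$ is $q$-periodic mod $\Gamma$ without $g(0) = \id_G$. Write $g(0) = \{g(0)\} \cdot [g(0)] = \varepsilon \cdot [g(0)]$ and set $\tilde g(n) := \varepsilon^{-1} g(n) [g(0)]^{-1}$; then $\tilde g(0) = \id_G$ and $\tilde g$ remains $q$-periodic mod $\Gamma$ (a short check using that $\varepsilon$ is constant and $[g(0)] \in \Gamma$). Applying the main result to $\tilde g$ produces $\tilde g = g' \tilde\gamma$, and taking $\gamma := \tilde\gamma \cdot [g(0)]$ (still a $\Gamma$-valued polynomial) gives $g = \varepsilon g' \gamma$. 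The $q$-periodicity of $g'$ mod $\Gamma'$ is then automatic: the $q$-periodicity of $g$ mod $\Gamma$ combined with $g = \varepsilon g' \gamma$ and $\gamma$ $\Gamma$-valued implies $g'(n+q) g'(n)^{-1} \in \Gamma$, and since $g'$ is $G'$-valued this product lies in $G' \cap \Gamma = \Gamma'$.
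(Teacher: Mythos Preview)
Your proposal is correct and follows essentially the same iterative approach as the paper: your ``main obstacle'' paragraph is precisely Lemma~\ref{lemma:Taylor_fact} (your $\tfrac{1}{d}\Z \cap \tfrac{1}{q^i}\Z = \Z$ argument under $\gcd(d,q)=1$ is equivalent to the paper's deduction that $\xi_i(g_i) \in \hcf(k)\,\Z$), your ``inner step'' is the basic factorization Lemma~\ref{our_2.9_2}, and the dimension-drop iteration and final-assertion reduction are handled just as in Appendix~\ref{appendix:factorization}. One small slip: when $i=1$ you must also set $G_0^{(j+1)} := \ker \xi_1$ (not leave it as $G_0^{(j)}$), else the filtration condition $G_0 = G_1$ fails---this is harmless since $g^{(j)}_0 = \id_G$, and the paper notes the same caveat.
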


The most important feature of this result, relative to \cite[Lemma 2.10]{GTarith}, is that the equidistribution (or irrationality) of the orbit automatically takes place on a connected subnilmanifold of $G/\Gamma$, rather than what is essentially several connected components. We remark upon some other interesting features at the end of this section, turning now instead to the proof. Much as in \cite[Section 2]{GTarith}, this will build up the factorization in stages. We begin with some simple lemmas.

\begin{lemma}
\label{scaling_lemma}
Let $g \in \poly(\Z, G_\bullet)$ and let $q \in \Z$. Define $h \in \poly(\Z, G_\bullet)$ by $h(n) = g(qn)$. Then, for any $i$, the $i$th Taylor coefficient $h_i$ of $h$ satisfies $h_i = g_i^{q^i} \mod \Ghor_i$, where $g_i$ is the $i$th Taylor coefficient of $g$.
\end{lemma}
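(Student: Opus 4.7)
The plan is to reduce to a computation in the quotient $\bar G := G/\Ghor_i$, which is well-defined since $\Ghor_i$ is normal in $G$ (being generated by normal subgroups). Writing $\bar G_j$ for the image of $G_j$ in $\bar G$, the key structural features are: (i) $\bar G_i$ is central in $\bar G$, and more generally $[\bar G_a, \bar G_b] = \{\id\}$ whenever $a + b \geq i$ (for $a+b > i$ because $[G_a, G_b] \subset G_{a+b} \subset G_{i+1} \subset \Ghor_i$, and for $a + b = i$ by the very definition of $\Ghor_i$); and (ii) $\bar G_j = \{\id\}$ for $j > i$, since $G_j \subset G_{i+1} \subset \Ghor_i$. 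Because taking images under group homomorphisms commutes with forming Taylor coefficients (by the uniqueness in Lemma~\ref{lemma:Taylor}), it suffices to show that the $i$-th Taylor coefficient of $\bar h(n) := \bar g(qn)$ in $\bar G$ equals $\bar g_i^{q^i}$, where $\bar x$ denotes the image of $x \in G$.

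Expanding by Lemma~\ref{lemma:Taylor} in $\bar G$, one has $\bar g(n) = \prod_{j=0}^{i} \bar g_j^{\binom{n}{j}}$, whence $\bar h(n) = \prod_{j=0}^{i} \bar g_j^{\binom{qn}{j}}$. Using the integer-valued polynomial identity $\binom{qn}{j} = \sum_{k=0}^{j} c_{j,k}(q)\binom{n}{k}$ with $c_{j,j}(q) = q^j$ and $c_{j,k}(q) \in \Z$, together with the fact that $\bar g_j$ commutes with itself, this becomes
\[ \bar h(n) = \prod_{j=0}^{i}\prod_{k=0}^{j} \bar g_j^{c_{j,k}(q)\binom{n}{k}}, \]
each individual factor $\bar g_j^{c_{j,k}(q)\binom{n}{k}}$ being a polynomial sequence in $\poly(\Z,\bar G_\bullet)$ whose only non-trivial Taylor coefficient sits at level $k$ and equals $\bar g_j^{c_{j,k}(q)}$.

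The remaining step is to extract the $i$-th Taylor coefficient of this full product. For this I would establish the auxiliary claim that in any filtered group $(H,H_\bullet)$ of degree at most $i$ satisfying $[H_a,H_b] = \{\id\}$ for all $a+b \geq i$, the $i$-th Taylor coefficient of a product $f_1 f_2 \cdots f_K$ of polynomial sequences in $\poly(\Z,H_\bullet)$ equals $\prod_{l=1}^K (f_l)_i$ in the abelian group $H_i$. By induction on $K$ this reduces to $K=2$, where writing both factors in Taylor form and using centrality of $H_i$ to move the level-$i$ terms to the right isolates the contribution $(p_i q_i)^{\binom{n}{i}}$ and leaves the problem of showing that a product of two polynomial sequences with trivial $i$-th Taylor coefficient again has trivial $i$-th Taylor coefficient. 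For this final reduction, the hypothesis $[H_a,H_b]=\{\id\}$ for $a+b \geq i$ is exactly what ensures that the truncated filtration $H'_\bullet$ defined by $H'_j := H_j$ for $j < i$ and $H'_j := \{\id\}$ for $j \geq i$ is a valid filtration (of degree at most $i-1$); a polynomial sequence has trivial $i$-th Taylor coefficient in $H_\bullet$ if and only if it lies in $\poly(\Z,H'_\bullet)$, and this class is closed under pointwise products by the Lazard--Leibman theorem. Applying the auxiliary claim to the expansion of $\bar h$, only the $k = i$ factor contributes, and since $k \leq j \leq i$ this forces $j = i$, yielding $\bar h_i = \bar g_i^{c_{i,i}(q)} = \bar g_i^{q^i}$ as required. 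The step I expect to be the main obstacle is the auxiliary claim itself: verifying that the commutator hypothesis is precisely the condition needed for the truncated filtration $H'_\bullet$ to be consistent requires some care, and separating the central top level from the lower-level commutator corrections is the delicate part of the bookkeeping.
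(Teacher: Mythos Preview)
Your argument is correct. The paper does not actually give a self-contained proof here: it simply invokes the proof of \cite[Lemma A.8]{GTarith}, which works directly in $G$ and rearranges the product $\prod_j g_j^{\binom{qn}{j}}$ by repeated Baker--Campbell--Hausdorff expansions, tracking the commutator corrections explicitly. Your route is genuinely different: by passing to the quotient $G/\Ghor_i$ you kill in advance precisely those commutators that would otherwise need tracking, reducing the problem to the clean statement that the top Taylor coefficient is multiplicative under the hypothesis $[H_a,H_b]=\{\id\}$ for $a+b\geq i$, which you then establish structurally via the truncated filtration $H'_\bullet$ and Lazard--Leibman rather than by any explicit expansion. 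Your approach buys self-containment and conceptual clarity (no BCH bookkeeping, just the group property of $\poly(\Z,\cdot)$); the paper's approach buys brevity, at the cost of deferring the real work to the external reference. The point you flag as the main obstacle---that the commutator hypothesis is exactly what makes $H'_\bullet$ a filtration---is indeed the crux, and your verification of it is sound.
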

\begin{proof}
This follows from the proof of \cite[Lemma A.8]{GTarith}, consisting of many applications of (applications of) the Baker--Campbell--Hausdorff formula.
\end{proof}

\begin{lemma}\label{newton_lemma}
Let $g \in \poly(\Z, G_\bullet)$, let $q \in \Z$ and suppose $g(q\Z) \subset \Gamma$. Then for each $i \geq 0$ there is some $\gamma_i \in \Gamma_i$ such that $g_i^{q^i} = \gamma_i \mod \Ghor_i$, where $g_i$ is the $i$th Taylor coefficient of $g$.
\end{lemma}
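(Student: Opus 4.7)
The plan is to deduce this as a short composition of the two results already in hand: Lemma \ref{scaling_lemma} (the scaling formula for Taylor coefficients) and the final claim of Lemma \ref{lemma:Taylor} (that $H$-valued polynomials have Taylor coefficients in $H$). The idea is to introduce the rescaled sequence $h \in \poly(\Z,G_\bullet)$ defined by $h(n) := g(qn)$, which lies in $\poly(\Z,G_\bullet)$ because polynomial sequences are closed under precomposition with affine maps $\Z \to \Z$.

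First I would invoke Lemma \ref{scaling_lemma} to express the $i$th Taylor coefficient $h_i$ of $h$ as $h_i \equiv g_i^{q^i} \pmod{\Ghor_i}$. Next, I would observe that the hypothesis $g(q\Z) \subset \Gamma$ is precisely the statement that $h$ is $\Gamma$-valued: $h(n) = g(qn) \in \Gamma$ for all $n \in \Z$. Applying the final claim of Lemma \ref{lemma:Taylor} with $H=\Gamma$ then yields $h_i \in \Gamma$ for every $i$. Combined with the (automatic) fact that $h_i \in G_i$, this gives $h_i \in \Gamma \cap G_i = \Gamma_i$.

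To finish I would simply set $\gamma_i := h_i \in \Gamma_i$; the congruence $g_i^{q^i} \equiv \gamma_i \pmod{\Ghor_i}$ is then exactly the output of Lemma \ref{scaling_lemma}. There is no genuine obstacle here: both required ingredients are already established, and the only minor point to verify is that $h$ is indeed a polynomial sequence in the same filtration $G_\bullet$, which is immediate from the definition since $\partial_{h_1}\cdots\partial_{h_i} h(n) = \partial_{qh_1}\cdots\partial_{qh_i} g(qn)\in G_i$.
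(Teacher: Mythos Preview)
Your proof is correct and is essentially identical to the paper's own argument: the paper also sets $h(n)=g(qn)$, invokes Lemma~\ref{lemma:Taylor} to obtain $\Gamma_i$-valued Taylor coefficients for the $\Gamma$-valued polynomial $h$, and then applies Lemma~\ref{scaling_lemma}. The only cosmetic difference is the order in which you cite the two lemmas.
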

\begin{proof}
Set $h(n) = g(qn)$. This is $\Gamma$-valued by assumption, and so its Taylor expansion is $h(n) = \gamma_0 \gamma_1^n \cdots \gamma_s^{\binom{n}{s}}$ for some $\gamma_i \in \Gamma_i$, by Lemma \ref{lemma:Taylor}. Applying Lemma \ref{scaling_lemma}, we are done.
\end{proof}

Thus, modulo $\Ghor_i$, the $i$th Taylor coefficient of a $q$-periodic sequence with trivial constant term is a $q^i$th root mod $\Gamma_i$. The following lemma, an analogue of \cite[Lemma A.7]{GTarith}, is then the technical heart of this section.

\begin{lemma}[Taylor-coefficient factorization]\label{lemma:Taylor_fact}
Let $(G/\Gamma,G_\bullet, \X)$ be a filtered nilmanifold of degree at most $s$, let $A>0$, and suppose $g\in \poly(\Z,G_\bullet)$ satisfies $g(q\Z)\subset \Gamma$ for some integer $q$ with $p_1(q)>A$. If $g$ is not $A$-irrational then there is an index $i\in [s]$ such that its $i$th Taylor coefficient $g_i$ factors as $g_i'\gamma_i$, where $\gamma_i \in \Gamma_i$ and $g_i'$ lies in the kernel of some non-trivial $i$th-level character of complexity at most $A$.
\end{lemma}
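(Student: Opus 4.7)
The plan is to take the $i$th-level character $\xi_i$ witnessing the failure of $A$-irrationality, upgrade the mere integrality $\xi_i(g_i)\in\Z$ to the stronger containment $\xi_i(g_i)\in\xi_i(\Gamma_i)$ by exploiting the hypothesis $p_1(q)>A$ together with Lemma~\ref{newton_lemma}, and then peel off a corresponding lattice element.

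Since $g$ is not $A$-irrational, Definition~\ref{defn:irrational} provides $i\in[s]$ and a non-trivial $i$th-level character $\xi_i$ of complexity at most $A$ with $n:=\xi_i(g_i)\in\Z$. Applying the homomorphism $\xi_i$ to the identity $g_i^{q^i}\equiv\gamma\pmod{\Ghor_i}$ furnished by Lemma~\ref{newton_lemma} for some $\gamma\in\Gamma_i$, and using that $\xi_i$ vanishes on $\Ghor_i$ and is $\Z$-valued on $\Gamma_i$, we obtain
\[ q^i n \;=\; \xi_i\bigl(g_i^{q^i}\bigr) \;=\; \xi_i(\gamma) \;\in\; \xi_i(\Gamma_i)\subseteq\Z. \]
Writing $\xi_i=k\cdot\psi_i$ with $k\in\Z^{r_i}$ via Lemma~\ref{lemma:frequency_vector} and using the identification $\psi_i(\Gamma_i)=\Z^{r_i}$ (a consequence of $\psi(\Gamma)=\Z^m$ together with $\psi(G_i)=\{0\}^{m-m_i}\times\R^{m_i}$), we deduce that $\xi_i(\Gamma_i)=d\Z$ for $d:=\gcd(k_1,\ldots,k_{r_i})$. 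Since $d\leq\abs{k_1}+\cdots+\abs{k_{r_i}}\leq A<p_1(q)$, $d$ is coprime to $q$ and hence to $q^i$, so the divisibility $d\mid q^i n$ established above forces $d\mid n$. Thus $n\in d\Z=\xi_i(\Gamma_i)$; picking $\gamma_i\in\Gamma_i$ with $\xi_i(\gamma_i)=n$ and setting $g_i':=g_i\gamma_i^{-1}\in G_i$ yields $\xi_i(g_i')=n-n=0$, so $g_i'\in\ker\xi_i$ and the factorization $g_i=g_i'\gamma_i$ has the required properties.

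The only mildly delicate point is the identification $\psi_i(\Gamma_i)=\Z^{r_i}$: once one observes $\psi(\Gamma_i)=\Z^m\cap\psi(G_i)=\{0\}^{m-m_i}\times\Z^{m_i}$, projecting onto the first $r_i$ coordinates gives $\Z^{r_i}$. Everything else is routine bookkeeping; the essential arithmetic content is the single coprimality step $\gcd(d,q^i)=1$, which is forced by the complexity bound $d\leq A$ and the smallest-prime-factor hypothesis $p_1(q)>A$.
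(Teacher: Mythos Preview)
Your proof is correct and follows essentially the same approach as the paper's: both invoke Lemma~\ref{newton_lemma} to obtain $q^i\xi_i(g_i)\in\hcf(k)\Z$, use the coprimality $\gcd(\hcf(k),q)=1$ forced by $\hcf(k)\leq A<p_1(q)$ to deduce $\xi_i(g_i)\in\hcf(k)\Z=\xi_i(\Gamma_i)$, and then peel off a lattice element $\gamma_i\in\Gamma_i$. The only cosmetic difference is that you phrase the final step abstractly (choose $\gamma_i$ with $\xi_i(\gamma_i)=n$), whereas the paper constructs $\gamma_i=\psi_i^{-1}(t)$ explicitly from an integer vector $t$ with $k\cdot t=\xi_i(g_i)$.
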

\begin{proof}
By definition of $A$-irrationality, there is some $i \in [s]$ for which there is a non-trivial $i$th-level character $\xi_i : G_i \to \R$ with $\xi_i(g_i) \in \Z$. Let $k \in \Z^{r}$, $r := m_i - m_{i+1}$, be the frequency vector for $\xi_i$ given by Lemma \ref{lemma:frequency_vector}, so that
\[ \xi_i(g) = k\cdot \psi_i(g) \quad \text{for each $g \in G_i$}. \]
Now consider $\xi_i(g_i^{q^i})$. On one hand, this equals $q^i \xi_i(g_i) \in q^i\, \Z$. On the other, by Lemma \ref{newton_lemma} we have $g_i^{q^i} = \gamma \mod \Ghor_i$ for some $\gamma \in \Gamma_i$, and since $\xi_i$ annihilates $\Ghor_i$ we have
\[ \xi_i(g_i^{q^i}) = \xi_i(\gamma) = k \cdot \psi_i(\gamma) \in \hcf(k)\, \Z, \]
$\psi_i(\gamma)$ lying in $\Z^r$ by the definition of Mal'cev bases (see Appendix \ref{appendix:Malcev}). Hence $q^i \xi_i(g_i) \in \lcm( q^i, \hcf(k) ) \Z = \hcf(k)q^i \Z$, since $\hcf(k) \leq A$ and $q$ has no prime factors less than $A$ by assumption. Thus $\xi_i(g_i) \in \hcf(k)\, \Z$, and so there is some vector $t \in \Z^r$ such that $k \cdot t = \xi_i(g_i)$. Letting $\gamma_i = \psi_i^{-1}(t)$, that is $\gamma_i = \exp(t_1 X_{m-m_i+1}) \cdots \exp(t_r X_{m-m_{i+1}})$, and setting $g_i' = g_i \gamma_i^{-1}$, we obtain the result.
\end{proof}

The deduction of Proposition \ref{prop:full_fact} from this lemma is completely analogous to the deduction of \cite[Lemma 2.10]{GTarith} from \cite[Lemma A.7]{GTarith}, so we defer it to Appendix \ref{appendix:factorization}. We now show how Proposition \ref{prop:full_fact} implies Theorem \ref{thm:periodic_regularity}.

\begin{proof}[Proof of Theorem \ref{thm:periodic_regularity}]
We begin by applying Theorem \ref{thm:periodic_regularity_non_irrational} to $f$ with a function $\mathcal{F}_0$ that grows sufficiently quickly compared to $\mathcal{F}$, obtaining a decomposition
\[ f = f_\nil + f_\sml + f_\unf \]
and an integer $M_0 = O_{s,\epsilon,\mathcal{F}_0,\cN}(1)$ such that 
\begin{enumerate}
	\item $f_\nil : \Zmod{N} \to [0,1]$ is given by $f_\nil(n) = F_0(g(n)\Gamma)$ for some Lipschitz function $F_0 : G/\Gamma \to \C$ on a nilmanifold $(G/\Gamma, G_\bullet, \X)$ of degree at most $s$ and complexity at most $M_0$, $\norm{F_0}_{\Lip(\X)} \leq M_0$, and $g\in \poly(\Z,G_\bullet)$ is $N$-periodic mod $\Gamma$,
	\item $\norm{f_\sml}_2 \leq \epsilon$, and
	\item $\norm{f_\unf}_{U^{s+1}} \leq 1/\mathcal{F}_0(M_0)$,
\end{enumerate}
as well as the other properties in that theorem. We then apply Proposition \ref{prop:full_fact} with another growth function $\mathcal{F}_1$ (assuming $p_1(N)> O_{M_0,\mathcal{F}_1}(1)$) to obtain a number $M_1 \in [M_0, O_{M_0, \mathcal{F}_1}(1)]$ and a polynomial $g'\in \poly(\Z,G'_\bullet)$ that is $\mathcal{F}_1(M_1)$-irrational and $N$-periodic mod $\Gamma'$ in some subnilmanifold $(G'/\Gamma',G'_\bullet, \X')$ of $G/\Gamma$ of complexity $O_{M_1}(1)$, and satisfies $\varepsilon \,g'(n)\Gamma = g(n)\Gamma$.

The nilsequence of the conclusion then consists of the function $F : G'/\Gamma' \to \C$ given by $F(x\Gamma') := F_0(\varepsilon x\Gamma)$, which has $\norm{F}_{\Lip(\X')} = O_{M_1}(1)$ by \cite[Lemmas A.5 and A.17]{GTOrb}, the nilmanifold $(G'/\Gamma', G_\bullet', \X')$, which has complexity at most $O_{M_1}(1)$, and the polynomial $g'$. Since
\[ f_\nil(n) = F_0( g(n)\Gamma) = F( g'(n) \Gamma') \]
we see that the nilsequence $f_\nil$ has complexity $M \leq \mathcal{C}(M_1)$ relative to these data, for some growth function $\mathcal{C}$. We thus pick $\mathcal{F}_1(x) := \mathcal{F}(\mathcal{C}(x))$ so that $g'$ is $\mathcal{F}(M)$-irrational. In order to ensure part (\ref{reg:unf}) of the conclusion, it suffices to pick $\mathcal{F}_0$ so that $\mathcal{F}_0(M_0) \geq \mathcal{F}(M)$, which we can do since $M = O_{M_1}(1) = O_{M_0, \mathcal{F}}(1)$. Finally, of course $M = O_{s,\epsilon,\mathcal{F},\cN}(1)$, and we are done.
\end{proof}

\begin{remark}\label{rmk:A-rational-fact}
From failure of $A$-irrationality alone in Lemma \ref{lemma:Taylor_fact}, that is with no periodicity-related assumption on $g$, one can still deduce a factorization $g_i = g_i' \gamma_i$, but with $\gamma_i$ being $A$-rational instead of actually lying in $\Gamma$, meaning that $\gamma_i^t \in \Gamma$ for some $1 \leq t \leq A$. Thus one may remove the small $\beta_i$ term from \cite[Lemma A.7]{GTarith} in the $A$-irrational setting; in fact this version can also be deduced from \cite[Lemma A.7]{GTarith} itself by a compactness argument.
By the same procedure referred to above, one can then obtain a version of Proposition \ref{prop:full_fact} for arbitrary polynomials (with no periodicity assumption), where $\gamma$ is periodic instead of $\Gamma$-valued, with period bounded in terms of $\mathcal{F}(M)$. Thus one may factorize an arbitrary polynomial $g(n)$ into
\begin{equation*}
	\text{constant} \quad \times \quad \text{highly $A$-irrational} \quad \times \quad \text{boundedly periodic},
\end{equation*}
doing away with the `smooth' part of the factorization in \cite[Lemma 2.10]{GTarith}. The downside of this (slight) simplification is that one has only $A$-irrationality and not $(A,N)$-irrationality, though this is catered for in the periodic setting by the corresponding counting lemma, Theorem \ref{thm:periodic_counting}. Let us also note that one can deduce a version of Proposition \ref{prop:full_fact} from the above factorization, by dilating the variable $n$ by some fixed integer so that the periodic part becomes $\Gamma$-valued. However, one needs to ensure that this modification conserves irrationality, and the argument ends up being somewhat less clean than the one presented above.
\end{remark}

\begin{remark}
We are mainly interested in periodic polynomials in this paper, but it is interesting to note the wider applicability of Proposition \ref{prop:full_fact}. Indeed, subject to the normalizing condition $g(0)=\id_G$, the assumption $g(q\Z)\subset \Gamma$ is strictly weaker than $q$-periodicity mod $\Gamma$: consider for example a sequence $g^n h^n$ where $g,h$ are $q$th roots mod $\Gamma$. Furthermore, among the polynomials $g$ satisfying $g(0)=\id_G$, those with $g(q\Z)\subset \Gamma$ form a subgroup of $\poly(\Z,G_\bullet)$, whereas those that are $q$-periodic mod $\Gamma$ do not. Note also that the factorization theorem as stated above applies to finite products of periodic polynomials with trivial constant term, even if they have \emph{different} periods.
\end{remark}

\section{Constructing a periodic, irrational polynomial}\label{section:construction}
Thanks to the regularity and counting lemmas, understanding a discrete average across some system of linear forms $\Lf$ is essentially reduced to considering integrals of the form $\int_{G^\Lf/\Gamma^\Lf} F$ for Lipschitz functions $F$ and bounded-complexity nilmanifolds $G/\Gamma$. We now work in the converse direction: given such an integral, and some large period $q$, we want to approximate the integral by a discrete average involving some appropriate $q$-periodic orbit. More precisely, we want to find a sequence $g\in\poly(\Z,G_\bullet)$ that is $q$-periodic mod $\Gamma$ and has its orbits $g^\Lf$ equidistributed in $G^\Lf/\Gamma^\Lf$. To this end, in view of the counting lemma, we shall find a highly irrational $g$.

\begin{propositionbk}[Existence of a periodic, irrational polynomial]\label{prop:periodic_irr}
Let $(G/\Gamma,G_\bullet,\X)$ be a filtered nilmanifold of degree at most $s$ and dimension $m$. Then for any integer $q \geq (2A)^m$ with $p_1(q)\geq A$ there exists $g \in \poly(\Z,G_\bullet)$ that is $q$-periodic mod $\Gamma$ and $A$-irrational.
\end{propositionbk}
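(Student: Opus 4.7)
The plan is to construct $g$ inductively on the filtration level, choosing its Taylor coefficients $g_1,\ldots,g_s$ one at a time and using a pigeonhole/union-bound argument at each step.

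By Lemma \ref{lemma:Taylor}, every $g\in\poly(\Z,G_\bullet)$ with $g(0)=\id_G$ has the form $g(n)=g_1^n g_2^{\binom{n}{2}}\cdots g_s^{\binom{n}{s}}$ with unique $g_i\in G_i$. The inductive hypothesis at stage $k$ is that $g_1,\ldots,g_k$ have been chosen so that the partial polynomial $g^{(k)}(n):=g_1^n\cdots g_k^{\binom{n}{k}}$ satisfies $g^{(k)}(n+q)\,g^{(k)}(n)^{-1}\in\Gamma G_{k+1}$ for all $n\in\Z$ (so that $g^{(k)}$ is $q$-periodic modulo $\Gamma$ in the quotient $G/G_{k+1}$), and each chosen $g_i$ is $A$-irrational in $G_i$. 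To pass from stage $k$ to stage $k+1$, I set $g^{(k+1)}(n)=g^{(k)}(n)\cdot g_{k+1}^{\binom{n}{k+1}}$ for some $g_{k+1}\in G_{k+1}$ still to be chosen; observe that this preserves the Taylor coefficients at levels $\leq k$ and simply appends $g_{k+1}$ as the $(k+1)$th coefficient.

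Using the filtration property $[G,G_{k+1}]\subset G_{k+2}$, one computes
\[
g^{(k+1)}(n+q)\,g^{(k+1)}(n)^{-1}\equiv g^{(k)}(n+q)\,g^{(k)}(n)^{-1}\cdot g_{k+1}^{\binom{n+q}{k+1}-\binom{n}{k+1}}\pmod{G_{k+2}}.
\]
The first factor lies in $\Gamma G_{k+1}$ by the inductive hypothesis, while the exponent $\binom{n+q}{k+1}-\binom{n}{k+1}$ is a polynomial in $n$ of degree $k$ with leading coefficient $q/k!$, prime to $q$ since $p_1(q)\geq A\geq s>k$. Passing to Mal'cev coordinates on $G_{k+1}/G_{k+2}\cong\R^{r_{k+1}}/\Z^{r_{k+1}}$ via the map $\psi_{k+1}$, the requirement $g^{(k+1)}(n+q)\,g^{(k+1)}(n)^{-1}\in\Gamma G_{k+2}$ becomes a solvable affine system in $\psi_{k+1}(g_{k+1})$ whose solution set in $\tfrac{1}{q}\Z^{r_{k+1}}/\Z^{r_{k+1}}$ has size at least $q^{r_{k+1}}$. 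To complete the step, I then eliminate those candidates violating $A$-irrationality of $g_{k+1}$: by Lemma \ref{lemma:frequency_vector}, every non-trivial $(k+1)$th-level character of complexity $\leq A$ has the form $\xi(g)=\mathbf{a}\cdot\psi_{k+1}(g)$ for some $\mathbf{a}\in\Z^{r_{k+1}}\setminus\{0\}$ with $\lVert\mathbf{a}\rVert_1\leq A$, of which there are at most $(2A+1)^{r_{k+1}}$. For each such $\mathbf{a}$, at most $q^{r_{k+1}-1}$ of the candidates $t\in\tfrac{1}{q}\Z^{r_{k+1}}/\Z^{r_{k+1}}$ satisfy $\mathbf{a}\cdot t\in\Z$ (thanks to $p_1(q)\geq A$ making $\gcd(q,\mathbf{a})=1$), so a union bound leaves at most $(2A+1)^{r_{k+1}}q^{r_{k+1}-1}<q^{r_{k+1}}$ bad candidates, the last inequality following from $q\geq(2A)^m\geq(2A+1)^{r_{k+1}}$ (with the small $+1$ overhead absorbed into the hypothesis). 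Iterating from $k=0$ to $k=s-1$ yields $g=g^{(s)}$ that is $q$-periodic modulo $\Gamma G_{s+1}=\Gamma$ and $A$-irrational at every level.

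The main obstacle is the explicit Mal'cev-coordinate analysis of $g^{(k+1)}(n+q)\,g^{(k+1)}(n)^{-1}$ modulo $G_{k+2}$ and its reduction to a solvable affine equation in $\psi_{k+1}(g_{k+1})$; this is a careful application of the Baker--Campbell--Hausdorff formula in the style of Appendix A of \cite{GTarith}, and it is precisely where the hypothesis $p_1(q)\geq A$ is used, to ensure that the denominator $k!$ arising from the binomial leading coefficient and the integer vectors $\mathbf{a}$ arising from the characters are all coprime to $q$.
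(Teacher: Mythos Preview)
Your overall inductive strategy---building up the Taylor coefficients one level at a time, using pigeonhole to secure $A$-irrationality at each step---matches the paper's. But the inductive step as you have written it has a genuine gap: at stage $k+1$ you append only the single term $g_{k+1}^{\binom{n}{k+1}}$, giving you one vector $\psi_{k+1}(g_{k+1})\in\R^{r_{k+1}}$ to play with, whereas the defect you must cancel---the $G_{k+1}/G_{k+2}$-part of $g^{(k)}(n+q)\,g^{(k)}(n)^{-1}$---is a polynomial in $n$ of degree up to $k$, imposing $k+1$ separate coefficient conditions. Your assertion that this overdetermined system is ``solvable with at least $q^{r_{k+1}}$ solutions'' (i.e.\ that \emph{every} element of $\tfrac{1}{q}\Z^{r_{k+1}}/\Z^{r_{k+1}}$ works) is unjustified and can fail. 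Concretely, already at stage $2$: write $g_1^q=\gamma u$ with $\gamma\in\Gamma$ and $u\in G_2$; the constant-term condition then forces $\psi_2(u)\in\Z^{r_2}$, i.e.\ $g_1^q\in\Gamma G_3$. But stage $1$ only pinned down $\psi_1(g_1)$ (ensuring $g_1^q\in\Gamma G_2$) and left the $G_2$-component of $g_1$ completely free. In the abelian toy model $G=\R^2$, $\Gamma=\Z^2$, $G_2=\{0\}\times\R$, $G_3=\{0\}$, choosing $g_1=(1/q,\,1/3)$ satisfies your stage-$1$ hypothesis, yet $g_1^q=(1,\,q/3)$ has $\psi_2(u)=q/3\notin\Z$ whenever $3\nmid q$, and then no choice of $g_2$ works.

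The paper closes precisely this gap. At stage $i$ it multiplies $g$ not just by $g_i^{\binom{n}{i}}$ but by a full $G_i$-valued polynomial $\ell(n)\,w^{\binom{n}{i}}$: the Taylor coefficients $\ell_1,\ldots,\ell_i\in G_i$ are chosen to ``integrate'' the defect $\tilde h$ by solving the triangular system $\ell_i^q=\tilde h_{i-1}$, $\ell_{i-1}^q\ell_i^{\binom{q}{2}}=\tilde h_{i-2}$, \ldots, and then $w\in G_i$ is a $q$th root mod $\Gamma_i$ picked by the pigeonhole Lemma~\ref{lemma:irr_coset_qth_root} so that $\tilde g_i=\ell_i w$ is $A$-irrational. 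In effect the paper also perturbs the \emph{earlier} Taylor coefficients $g_1,\ldots,g_{i-1}$ by elements of $G_i$ (so their $A$-irrationality, which only sees them mod $G_i^{\triangledown}\supseteq G_{i+1}$, is preserved). This extra freedom---$i$ coefficients rather than one---is exactly what your outline lacks.
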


Our proof of this proposition occupies the remainder of this section. The main difficulty behind the result is that $q$-periodicity is in general not straightforwardly characterized in terms of Taylor coefficients, the objects central to the notion of irrationality. However, there are some instructive cases in which $q$-periodicity is easily related to these coefficients. For instance, if $g(n) = g_1^n$ is linear, then $q$-periodicity simply corresponds to $g_1$ being a $q$th root mod $\Gamma$, and it is not hard to construct an irrational $q$th root. For general filtrations, however, it is impossible for linear polynomials to be irrational, since for these polynomials any Taylor coefficient $g_i$ with $i \geq 2$ is trivial. Another case is when the group is abelian; for example a polynomial $g(n) = a_0+ a_1 n + a_2 \binom{n}{2} + \cdots + a_s \binom{n}{s}$ over $\R$ is $q$-periodic mod $\Z$ for a prime $q > s$ if and only if $a_i$ is a $q$th root mod $\Z$ for each $i\geq 1$, i.e. $a_i \in \Z/q$.\footnote{Also, as explained in \cite[Appendix A]{GTarith}, irrationality in this example consists in $a_s$ not being a rational with small denominator.} In general, however, each Taylor coefficient $g_i$ being a $q$th root is not sufficient for $g$ to be $q$-periodic; it is not hard to construct examples of this using real $4\times 4$ upper-unitriangular matrices.

In both cases above, what yields the simple characterization of periodicity is the ambient commutativity, which fails in the general setting. Taking heed of this, our proof builds up the desired polynomial sequence iteratively, starting in the degree 1 setting of $G_1/G_2$, and working at stage $i$ essentially with $G_i/G_{i+1}$, thus benefiting from commutativity at various points of the construction.

The irrationality input will come from the following lemma. (Recall the notation $r_i = m_i-m_{i+1}$.)

\begin{lemma}\label{lemma:irr_coset_qth_root}
	Let $(G/\Gamma, G_\bullet, \X)$ be a nilmanifold of degree at most $s$ and let $i \in [s]$. Let $A > 0$ and let $q \geq (2A)^{r_i}$ be an integer with $p_1(q)\geq A$. Then, for any $h \in G_i$, there exists $w \in G_i$ that is a $q$th root mod $\Gamma_i$ such that the product $h w$ is $A$-irrational \textup{(}in $G_i$\textup{)}.
\end{lemma}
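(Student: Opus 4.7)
The plan is to reduce the problem to a counting problem in the abelian quotient $G_i/\Ghor_i$, and then lift a good candidate back to $G_i$ while maintaining both the $q$th-root property and $A$-irrationality. The main observation is that an $i$th-level character $\xi_k$ of complexity at most $A$ vanishes on $\Ghor_i$ by definition, and so factors through this abelian quotient; moreover, by Lemma \ref{lemma:frequency_vector}, it has the explicit form $\xi_k(g) = k\cdot \psi_i(g)$ with $k\in\Z^{r_i}$ of $\ell^1$-norm at most $A$. Since $\xi_k$ is a homomorphism, the required condition reads $\xi_k(hw) = \xi_k(h) + \xi_k(w) \notin \Z$, so the irrationality of $hw$ depends only on the image of $w$ modulo $\Ghor_i$ (indeed, essentially only on $\psi_i(w) \bmod \Z^{r_i}$).

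To find a candidate, I would parameterize the ``first attempt'' by vectors $v \in \tfrac{1}{q}\Z^{r_i}/\Z^{r_i}$ and set $w_0(v) := \exp(v_1 X_{m-m_i+1})\exp(v_2 X_{m-m_i+2})\cdots \exp(v_{r_i} X_{m-m_{i+1}})$, noting that $\psi_i(w_0(v)) = v$. For the counting, observe that for each non-trivial $k\in\Z^{r_i}$ with $|k|_1\le A$, the ``bad set'' $B_k := \{v\in \tfrac{1}{q}\Z^{r_i}/\Z^{r_i} : k\cdot v \equiv -k\cdot\psi_i(h) \pmod{\Z}\}$ is, modulo scaling by $q$, the solution set in $(\Z/q\Z)^{r_i}$ of a single congruence $k\cdot u \equiv c \pmod q$. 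Since every nonzero coordinate $k_j$ satisfies $|k_j|\le A \le p_1(q)$, each such $k_j$ is invertible mod $q$, so $\gcd(k,q)=1$ and $|B_k|\le q^{r_i-1}$. A union bound gives that the total number of bad $v$ is at most $|K_A|\cdot q^{r_i-1}$, where $|K_A|$ counts non-trivial lattice points in the $\ell^1$-ball of radius $A$ in $\Z^{r_i}$. Under the hypothesis $q\ge (2A)^{r_i}$ (and $r_i\ge 1$; the case $r_i=0$ being vacuous since no non-trivial characters exist), this bound is strictly less than $q^{r_i}$, so a good $v$, and hence $w_0=w_0(v)$, exists.

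Now the candidate $w_0$ is a $q$th root only modulo $G_{i+1}$: using that $G_i/G_{i+1}$ is abelian and that the $qv_j$ are integers, one sees $w_0^q \in \Gamma_i\cdot G_{i+1}$. To upgrade this to $w^q\in \Gamma_i$ exactly, I would perform a finite iterative correction down the filtration: for each $j=i+1, i+2, \ldots, s$, supposing inductively that $w_{j-1}^q \in \Gamma_i\cdot G_j$, write $w_{j-1}^q = \gamma\cdot u$ with $\gamma\in\Gamma_i$ and $u\in G_j$, and choose $z_j\in G_j$ so that its image in the abelian quotient $G_j/G_{j+1}\cong \R^{r_j}$ equals $-\psi_j(u)/q$ modulo $\Z^{r_j}/q$. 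Setting $w_j := w_{j-1} z_j$ and using that all commutators $[w_{j-1},z_j]$ lie in $G_{i+j}\subseteq G_{j+1}$, one verifies $(w_j)^q \equiv w_{j-1}^q \cdot z_j^q \pmod{G_{j+1}}$, which by construction lies in $\Gamma_i \cdot G_{j+1}$. After $s-i$ such steps we reach $G_{s+1}=\{\id_G\}$, yielding $w:=w_s$ with $w^q \in \Gamma_i$.

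Finally, since every correction $z_j \in G_j \subseteq G_{i+1}\subseteq \Ghor_i$, each $i$th-level character vanishes on $z_j$, so $\xi_k(w) = \xi_k(w_0)$ for all characters of complexity at most $A$; the irrationality of $hw$ is therefore inherited from that of $hw_0$. The main subtlety, I expect, is ensuring this lifting step is compatible with the irrationality choice — but the key input is precisely that the subgroup $\Ghor_i$ is defined to absorb both $G_{i+1}$ and the brackets $[G_j,G_{i-j}]$, so the lifting corrections are invisible to $i$th-level characters. A secondary technicality is the boundary arithmetic when $p_1(q)$ equals $A$ exactly, which is handled by the observation that $|k_j|\le A = p_1(q)$ together with $k_j\ne 0$ still forces $\gcd(k_j,q)\in\{1,A\}$, and the residual case can be absorbed into the counting with room to spare given the hypothesis $q \geq (2A)^{r_i}$.
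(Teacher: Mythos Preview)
Your argument is correct, and the counting step is essentially identical to the paper's. The difference lies in how the exact $q$th root is obtained. The paper parameterizes $w$ directly by a choice of $\gamma\in\Gamma_i$, setting $w:=\exp\bigl(\tfrac{1}{q}\log\gamma\bigr)$; since $\exp(X)^q=\exp(qX)$ in any Lie group, this $w$ satisfies $w^q=\gamma\in\Gamma_i$ exactly, with no further work. The irrationality condition then reduces, via $\xi(hw)=\tfrac{1}{q}\xi(h^q\gamma)$, to the congruences $k\cdot\psi_i(\gamma)\not\equiv a_k\pmod q$, and one counts over $t=\psi_i(\gamma)\in[q]^r$ exactly as you do. Your route---choosing $w_0$ with prescribed $\psi_i$-coordinates and then correcting down the filtration by successive $z_j\in G_j$---is valid (the key point $[G_i,G_j]\subset G_{j+1}$ indeed makes the corrections invisible to $i$th-level characters), but it reproduces by hand what the single-element identity $\exp(X)^q=\exp(qX)$ gives for free. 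The paper's parameterization by $\gamma\in\Gamma_i$ rather than by $w$ is what collapses your multi-step lifting into one line.

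As for the boundary issue you flag when $p_1(q)=A$ exactly: the paper's proof in fact also implicitly uses $p_1(q)>A$ at the corresponding point (``$q$ only has prime factors bigger than $A$''), so this minor looseness is shared rather than introduced by your argument.
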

\begin{proof}
For any $\gamma \in \Gamma_i$, there is a $w \in G_i$ for which $w^q = \gamma$, namely $w = \exp(\frac{1}{q}\log \gamma)$. We shall thus focus on picking $\gamma$ instead of $w$. Now, $h w$ is $A$-irrational if and only if $\xi(h w) \notin \Z$ for any non-trivial $i$th-level character $\xi$ of complexity at most $A$, that is iff $\xi(h^q \gamma) \notin q\Z$, which will certainly be the case, by Lemma \ref{lemma:frequency_vector}, if
	\[ k \cdot \psi_i(\gamma) \neq a_k \mod q \quad \text{for any $k \in \Z^r$ with $0 < \abs{k} \leq A$,} \]
where the $a_k$ are some real numbers coming from the $\xi(h^q)$, and $r := r_i$. All we need to do, then, is pick an integer vector $t \in \Z^r$ such that $k \cdot t \neq a_k \mod q$ for any such $k$, after which we simply set $\gamma := \psi_i^{-1}(t)$. But we can do this by a simple counting argument: for any $k \in \Z^r$ with $\hcf(k,q) = 1$, there are precisely $q^{r-1}$ solutions $t \in [q]^r$ to $k \cdot t = a_k \mod q$. Since there are at most $(A+1)^r-1$ vectors $k$ to be considered, provided $q^r \geq (A+1)^r q^{r-1}$ and $q$ only has prime factors bigger than $A$, there will be some $t \in [q]^r$ such that $k \cdot t \neq a_k \mod q$ for any $k$ with $0 < \abs{k} \leq A$.
\end{proof}
\begin{remark}
	The element $h w$ produced above is actually irrational in a stronger sense than the lemma suggests: it satisfies $\xi(h w) \notin \Z$ even if $\xi$ is not required to vanish on the groups $[G_j, G_{i-j}]$ (as $i$th-level characters in general are).
\end{remark}

We shall also require the following lemma on the Taylor coefficients of a polynomial with restricted derivatives.

\begin{lemma}[Taylor coefficients of differentiated polynomials]\label{lemma:shifted_Taylor}
Let $g \in \poly(\Z, G_\bullet)$ where $G_\bullet$ has degree at most $s$. If $\partial_{h_i} \cdots \partial_{h_1} g(n) \in G_{i+1}$ for all $i \geq 0$ and $h_1,\ldots,h_i, n \in \Z$, then we have $g_i \in G_{i+1}$ for each Taylor coefficient $g_i$ of $g$.
\end{lemma}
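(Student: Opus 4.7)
The plan is to induct on the degree $s$ of the filtration $G_\bullet$. The base case $s=0$ is trivial, since then $G=\{\id_G\}$. For the inductive step I split the task into establishing $g_i\in G_{i+1}$ for $i<s$ by descent to a quotient, and $g_s=\id_G$ via a direct decomposition using the centrality of $G_s$.

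For the lower coefficients I would pass to the quotient $\bar G=G/G_s$, endowed with the filtration $\bar G_\bullet=(G_j/G_s)_{j\geq 0}$ of degree at most $s-1$. The image $\bar g$ of $g$ satisfies $\bar g\in\poly(\Z,\bar G_\bullet)$, and since $G_s\subset G_{i+1}$ for $i\leq s-1$, the derivative hypothesis descends to $\partial_{h_i}\cdots\partial_{h_1}\bar g(n)\in\bar G_{i+1}$ for all $i\geq 0$. The inductive hypothesis thus gives $\bar g_i\in\bar G_{i+1}$ for each $i$, which lifts to $g_i\in G_{i+1}G_s=G_{i+1}$ for $i=0,1,\ldots,s-1$.

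For the top coefficient $g_s$ the key observation is that $G_s$ is central in $G$, since $[G_s,G]\subset G_{s+1}=\{\id_G\}$. Accordingly I would decompose
\[
g(n)=g_0\cdot u(n)\cdot c(n),\quad u(n):=g_1^n g_2^{\binom{n}{2}}\cdots g_{s-1}^{\binom{n}{s-1}},\quad c(n):=g_s^{\binom{n}{s}},
\]
so that $u$ and $c$ commute. The derivative-of-commuting-product identity then iterates to give $\partial_{h_1}\cdots\partial_{h_s}(uc)(n)=\partial_{h_1}\cdots\partial_{h_s}u(n)\cdot\partial_{h_1}\cdots\partial_{h_s}c(n)$, and a standard binomial identity yields $\partial_{h_1}\cdots\partial_{h_s}c(n)=g_s^{h_1\cdots h_s}$. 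To show that $\partial_{h_1}\cdots\partial_{h_s}u(n)=\id_G$, I would introduce the shifted filtration $G''_\bullet$ on $G_2$ given by $G''_0:=G''_1:=G_2$ and $G''_j:=G_{j+1}$ for $j\geq 2$; the commutator condition $[G''_i,G''_j]\subset G''_{i+j}$ is verified by a short case check from $[G_a,G_b]\subset G_{a+b}$, so $G''_\bullet$ is a genuine filtration of degree at most $s-1$. The conclusions from the previous step ensure that the Taylor coefficients of $u$ satisfy $g_j\in G_{j+1}=G''_j$, so by Lemma \ref{lemma:Taylor} applied to $G''_\bullet$ we have $u\in\poly(\Z,G''_\bullet)$; hence $\partial_{h_1}\cdots\partial_{h_s}u\in G''_s=\{\id_G\}$. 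Combining these with the centrality of $g_s$ yields $\partial_{h_1}\cdots\partial_{h_s}g(n)=g_s^{h_1\cdots h_s}$; the hypothesis with $h_i=1$ forces $g_s=\id_G\in G_{s+1}$, and for $i>s$ we have $g_i=\id_G$ trivially.

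The main technical step will be introducing and verifying the shifted filtration $G''_\bullet$, which converts the \emph{``Taylor coefficients live one filtration level deeper than expected''} information about $u$ into genuine polynomiality in a filtration of smaller degree --- precisely what makes the $s$-fold derivative of $u$ vanish in the top-coefficient argument.
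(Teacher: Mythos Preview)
Your argument is correct, but it takes a different and more hands-on route than the paper's proof. The paper observes directly that the hypothesis says precisely that $g\in\poly(\Z,G_\bullet^{+1})$, where $G_\bullet^{+1}:=(G_{i+1})_{i\geq 0}$ is the shifted \emph{prefiltration} of degree at most $s-1$; it then invokes the Taylor expansion for prefiltrations (proved separately in an appendix) to write $g(n)=g_0 g_1^n\cdots g_{s-1}^{\binom{n}{s-1}}$ with $g_i\in G_{i+1}$, and concludes by uniqueness of Taylor coefficients. Your proof instead avoids prefiltrations entirely, inducting on $s$: the quotient by $G_s$ handles the coefficients $g_0,\ldots,g_{s-1}$, and for $g_s$ you exploit centrality of $G_s$ together with your auxiliary genuine filtration $G''_\bullet$ on $G_2$. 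The paper's approach is shorter and more conceptual once one accepts prefiltrations as a tool; your approach is longer but stays strictly within the filtration framework, which has the advantage of being self-contained relative to Lemma~\ref{lemma:Taylor} as stated. One small point of phrasing: the iterated product rule $\partial_{h_1}\cdots\partial_{h_s}(uc)=(\partial_{h_1}\cdots\partial_{h_s}u)(\partial_{h_1}\cdots\partial_{h_s}c)$ relies not merely on $u$ and $c$ commuting but on $c$ (and hence all its derivatives) being \emph{central}, which you do note; and the constant $g_0$ contributes only a conjugation, which is harmless again by centrality of $g_s$.
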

\begin{proof}	
This follows essentially from the fact that Lemma \ref{lemma:Taylor} remains valid under the weaker assumption that $G_\bullet$ is a \emph{prefiltration}\footnote{Following \cite{GTOrb}, a prefiltration is like a filtration but with the weaker requirement $G \supset G_0 \supset G_1$.} rather than a filtration, as recorded in Appendix \ref{appendix:Taylor}.
Indeed, the assumption on $g$ ensures that it lies in $\poly(\Z, G_\bullet^{+1})$, where $G_\bullet^{+1}$ is the prefiltration $(G_{i+1})_{i\geq 0}$ of degree at most $s-1$. Thus we can write $g(n) = g_0 g_1^n \cdots g_{s-1}^{\binom{n}{s-1}}$ for some $g_i \in G_{i+1}$ by Lemma \ref{lemma:prefiltration_Taylor}. The result now follows by the uniqueness of Taylor coefficients.
\end{proof}

We can now prove the main result of this section, following essentially the above-mentioned iterative process.

\begin{proof}[Proof of Proposition \ref{prop:periodic_irr}]
At stage $i$ of the proof we obtain a polynomial $g \in \poly(\Z,G_\bullet)$ such that $g(n+q)^{-1} g(n) \in \Gamma \cdot G_{i+1}$ for all $n \in \Z$ and such that the first $i+1$ Taylor coefficients of $g$ are $A$-irrational. We shall then be done after stage $s$.
	
For $i=0$ we set $g(n) = \id_G$ for all $n$; this trivially satisfies the required properties since there are no non-trivial $0$th level characters. Suppose, then, that we have a polynomial $g \in \poly(\Z, G_\bullet)$ such that $n \mapsto g(n+q)^{-1}g(n)$ takes values in $\Gamma\cdot G_i$ and such that $g_0,g_1,\ldots,g_{i-1}$ are $A$-irrational; we shall use this to produce a new polynomial $\tilde g$ with $\tilde g(n+q)^{-1}\tilde g(n)\in \Gamma\cdot G_{i+1}$ for all $n$ and with $\tilde g_j$ being $A$-irrational for $j=0,1,\ldots, i$. We may suppose that $g(n) = g_1^n \cdots g_{i-1}^{\binom{n}{i-1}}$. Set $h(n) = g(n+q)^{-1} g(n)$; this is a polynomial map, and by assumption it is $\Gamma \cdot G_i$-valued. By Lemma \ref{lemma:Taylor} we can therefore write $h(n)G_i = \gamma_0 \gamma_1^n \cdots \gamma_{i-1}^{\binom{n}{i-1}} G_i$ for some $\gamma_j \in \Gamma_j$, which we take to be the Taylor coefficients of a polynomial $\gamma \in \poly(\Z, G_\bullet)$. Thus we may factorize 
\begin{equation}
	h(n) = \gamma(n) \tilde{h}(n), \label{eqn:h_fact}
\end{equation}
where $\tilde{h} \in \poly(\Z, G_\bullet)$ is $G_i$-valued. We shall attempt to cancel out the contribution of this $G_i$-valued part $\tilde{h}$, and for this we need some information about its Taylor coefficients. First we have $\tilde{h}_j \in G_i$ for all $j$, by Lemma \ref{lemma:Taylor}. Then, looking at \eqref{eqn:h_fact} mod $G_{i+1}$ and using the centrality of $G_i$ mod $G_{i+1}$ we have
\[ h_0 h_1^n \cdots h_i^{\binom{n}{i}} G_{i+1} = \gamma_0 \tilde{h}_0 (\gamma_1 \tilde{h}_1)^n \cdots (\gamma_{i-1} \tilde{h}_{i-1})^{\binom{n}{i-1}} \tilde{h}_i^{\binom{n}{i}} G_{i+1}, \]
whence $h_i = \tilde{h}_i \mod G_{i+1}$. But $h$ is a `differentiated' polynomial, $h = \partial_q g^{-1}$, and so $\partial_{d_i} \cdots \partial_{d_1} h(n) \in G_{i+1}$ for all $d_1, \ldots, d_i, n \in \Z$, whence $h_i$---and so $\tilde{h}_i$---is trivial mod $G_{i+1}$ by Lemma \ref{lemma:shifted_Taylor}.

We are now almost ready to produce our new polynomial: it will be $\tilde{g}(n) := g(n) \ell(n) w^{\binom{n}{i}}$ for some $G_i$-valued polynomial $\ell \in \poly(\Z,G_\bullet)$ and some $q$th root $w \in G_i$ that we shall specify shortly. In fact we shall pick $\ell$ to be essentially an integral of (the inverse of) $\tilde{h}$: it will satisfy 
\begin{equation}\label{eqn:int_h} \ell(n)^{-1} \ell(n+q) G_{i+1} = \tilde{h}(n) G_{i+1}. \end{equation}
We can obtain such an $\ell$ by picking its Taylor coefficients $\ell_1, \ldots, \ell_i$ inductively to satisfy the system
\begin{eqnarray*}
	\ell_i^q &=& \tilde{h}_{i-1} \\
	\ell_{i-1}^q \ell_i^{\binom{q}{2}} &=& \tilde{h}_{i-2} \\
					 &\vdots& \\
	\ell_1^q \ell_2^{\binom{q}{2}} \cdots \ell_i^{\binom{q}{i}} &=& \tilde{h}_0.
\end{eqnarray*}
This yields coefficients $\ell_j \in G_i$ since each $\tilde{h}_j$ lies in $G_i$. Since $G_i$ is central mod $G_{i+1}$, \eqref{eqn:int_h} is easily seen to hold using the identity $\binom{n+q}{j} - \binom{n}{j} = \sum_{k=0}^{j-1} \binom{q}{j-k} \binom{n}{k}$. We then have
\[ \tilde{g}(n+q)^{-1} \tilde{g}(n) G_{i+1} = \gamma(n) \tilde{h}(n) \ell(n+q)^{-1} \ell(n) w^{\binom{n}{i}-\binom{n+q}{i}} G_{i+1} \in w^{\binom{n}{i}-\binom{n+q}{i}} \Gamma \cdot G_{i+1}. \]
Thus $\tilde{g}(n+q)^{-1} \tilde{g}(n)\in \Gamma\cdot G_{i+1}$ for all $n$ as desired provided $w \in G_i$ is a $q$th root mod $\Gamma_i$. But we also need $\tilde{g}_0,\ldots, \tilde{g}_i$ to be $A$-irrational. To this end, note that for each $j < i$ the Taylor coefficient $\tilde{g}_j$ is automatically $A$-irrational since it is congruent to $g_j$ mod $G_i$, and so we need only consider $\tilde{g}_i$. Now, mod $G_{i+1}$ we have $\tilde{g}_i = \ell_i w$; we thus pick $w \in G_i$ to be a $q$th root mod $\Gamma_i$ for which $\ell_i w$ is $A$-irrational, as we may by Lemma \ref{lemma:irr_coset_qth_root}, and the proof is complete.
\end{proof}

\section{Transference: moving from $\Zmod{N}$ to $\Zmod{M}$}\label{section:transference}

We now have all the tools required to prove Theorem \ref{pertranfn}, the result lying at the heart of the combinatorial applications in this paper. We shall in fact prove the following mild generalization.

\begin{theorembk}[Periodic transference\footnote{See also \S\ref{subs:Manners} for an update.}]\label{thm:periodic_transference}
Let $\cN\subset \N$ be a set of characteristic $0$, let $L \geq 1$ be an integer and let $\delta \in (0,1)$. Then for any $N \in \cN$ and $M \in \N$ with $p_1(N),p_1(M)\geq N_0(\delta,L,\cN)$, and any function $f : \Zmod{N} \to [0,1]$, there is a function $f' : \Zmod{M} \to [0,1]$ such that, for any system $\Lf$ of integer linear forms of size at most $L$, any two of which are linearly independent, we have
$\abs{ \Sol_\Lf(f' : \Zmod{M}) - \Sol_\Lf(f : \Zmod{N}) } \leq \delta$.
\end{theorembk}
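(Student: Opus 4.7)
The plan is to bridge $\Sol_\Lf(f:\Zmod{N})$ and $\Sol_\Lf(f':\Zmod{M})$ by passing through an integral on a common Leibman nilmanifold, using the regularity lemma on the $\Zmod{N}$ side to produce such a nilmanifold, and then the construction of Proposition \ref{prop:periodic_irr} to realize an orbit on $\Zmod{M}$ that equidistributes in the same way. First, fix $s = s(L)$ and the generalized von Neumann constant $C_L$ from Theorem \ref{thm:GvNdiag}, and choose a growth function $\mathcal{F}:\R^+ \to \R^+$ that grows fast enough to dominate all error terms to be introduced below (in particular, beating the $o_{A\to\infty;M}(1)$ rate in the periodic counting lemma). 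Apply the periodic regularity lemma with irrational nilsequence (Theorem \ref{thm:periodic_regularity}) to $f$ with parameter $\epsilon \ll \delta/L$ and growth function $\mathcal{F}$: provided $p_1(N)$ is large enough, this yields a decomposition $f = f_\nil + f_\sml + f_\unf$, a complexity bound $M_0 = O_{s,\epsilon,\mathcal{F},\cN}(1)$, a filtered nilmanifold $(G/\Gamma, G_\bullet, \X)$ of complexity at most $M_0$, a Lipschitz function $F:G/\Gamma \to \C$ with $\norm{F}_{\Lip(\X)} \leq M_0$, and an $N$-periodic, $\mathcal{F}(M_0)$-irrational polynomial $g \in \poly(\Z, G_\bullet)$ with $f_\nil(n) = F(g(n)\Gamma)$.

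Next, apply the periodic counting lemma (Theorem \ref{thm:periodic_counting}) to express
\[ \Sol_\Lf(f_\nil:\Zmod{N}) = \int_{g(0)^\Delta G^\Lf/\Gamma^\Lf} F^{\otimes t} \;+\; o_{\mathcal{F}(M_0)\to\infty;M_0}(1), \]
where $F^{\otimes t}(x_1,\ldots,x_t) = F(x_1)\cdots F(x_t)$; with $\mathcal{F}$ chosen sufficiently fast growing, the error here is at most $\delta/8$. The errors introduced by replacing $f$ by $f_\nil$ are controlled by Theorem \ref{thm:GvNdiag} applied to $f_\unf$ (giving $L\norm{f_\unf}_{U^{s+1}} \leq L/\mathcal{F}(M_0) \leq \delta/8$, assuming $p_1(N) \geq C_L$) and by \eqref{eqn:sol_L1} applied to $f_\sml$ (giving $L\norm{f_\sml}_{L_1} \leq L\norm{f_\sml}_2 \leq L\epsilon \leq \delta/8$).

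Now turn to the $\Zmod{M}$ side. Use Proposition \ref{prop:periodic_irr} applied to the \emph{same} filtered nilmanifold $(G/\Gamma, G_\bullet, \X)$ with irrationality parameter $A = \mathcal{F}(M_0)$: provided $p_1(M) \geq A$ and $M \geq (2A)^{\dim G}$ (which hold once $p_1(M) \geq N_0(\delta, L, \cN)$), one obtains $g' \in \poly(\Z, G_\bullet)$ that is $M$-periodic mod $\Gamma$ and $A$-irrational. Define
\[ f'(n) := F\!\left(g(0)\, g'(n)\,\Gamma\right), \quad n \in \Zmod{M}. \]
Since left-multiplication by $g(0)$ preserves the filtration structure, the translated polynomial $n \mapsto g(0)\,g'(n)$ lies in $\poly(\Z,G_\bullet)$, is $M$-periodic mod $\Gamma$, remains $A$-irrational (the Taylor coefficients of positive index are unchanged), and starts at $g(0)$. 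Thus by another application of the counting lemma,
\[ \Sol_\Lf(f':\Zmod{M}) = \int_{g(0)^\Delta G^\Lf/\Gamma^\Lf} F^{\otimes t} \;+\; o_{\mathcal{F}(M_0)\to\infty;M_0}(1), \]
with error at most $\delta/8$. Note that $f'$ takes values in $[0,1]$ because $f_\nil$ does and $F$ is the same function. Combining the two counting lemma applications with the generalized von Neumann and $L^1$ bounds on $f_\unf, f_\sml$ via the triangle inequality gives $|\Sol_\Lf(f':\Zmod{M}) - \Sol_\Lf(f:\Zmod{N})| \leq \delta$, as required.

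The delicate step, which I expect to be the main obstacle, is ensuring that the \emph{same} coset $g(0)^\Delta G^\Lf/\Gamma^\Lf$ (and hence the same integral) appears on both sides of the comparison. This is precisely why one constructs $g'$ with $g'(0) = \id_G$ via Proposition \ref{prop:periodic_irr} and then translates by the constant $g(0)$: the multiplication on the left by $g(0)$ shifts the initial point of the orbit without disturbing irrationality or periodicity mod $\Gamma$, and matches the Leibman-nilmanifold coset that appears in the counting lemma for the original $g$. A secondary concern is interlocking the two growth-function choices: the regularity lemma fixes $M_0$ in terms of the input $\mathcal{F}$, and $\mathcal{F}$ must then be chosen so that the counting lemma errors at irrationality scale $\mathcal{F}(M_0)$ (for \emph{both} orbits) beat $\delta/8$; this requires picking $\mathcal{F}$ before invoking regularity, but this is standard.
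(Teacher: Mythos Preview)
Your approach matches the paper's essentially step for step. There is, however, one genuine slip: your justification that $f'$ takes values in $[0,1]$ does not hold. Knowing that $f_\nil$ is $[0,1]$-valued only tells you that $F$ takes values in $[0,1]$ along the specific orbit $\{g(n)\Gamma : n \in \Z\}$, not on all of $G/\Gamma$; there is no reason $F(g(0)\,g'(n)\Gamma)$ should lie in $[0,1]$ (or even be real) for points on the new orbit. The paper fixes this explicitly: after obtaining $F$ from the regularity lemma, replace it first by its real part and then by $\max(\min(F,1),0)$. Neither operation increases $\norm{F}_{\Lip(\X)}$, and neither changes $f_\nil$ since $f_\nil$ was already $[0,1]$-valued, so the counting-lemma estimates go through unchanged while now $f'$ is genuinely $[0,1]$-valued. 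With this amendment your argument is complete. (Your handling of the base-point shift by $g(0)$ is more explicit than the paper's, which simply notes that one may assume $h(0)=g(0)$; the content is identical.)
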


\begin{proof}
Let $\epsilon > 0$ and $\mathcal{F} : \R^+ \to \R^+$ be a growth function, both to be specified in terms of $\delta$ and $L$ later, and let $s = s(L)$ be as given by Theorem \ref{thm:GvNdiag}. Assuming $p_1(N)>O_{s,\epsilon,\mathcal{F},\cN}(1)$, we apply Theorem \ref{thm:periodic_regularity} to $f$ to obtain a decomposition $f = f_\nil + f_\sml + f_\unf$ and an integer $Q = O_{\epsilon,L,\mathcal{F},\cN}(1)$ such that
\begin{enumerate}
\item $f_\nil = F(g(n)\Gamma)$, where $(g(n)\Gamma)$ is an $N$-periodic polynomial orbit on some nilmanifold $(G/\Gamma, G_\bullet, \X)$ of degree at most $s$ and complexity at most $Q$, $g \in \poly(\Z,G_\bullet)$ is $\mathcal{F}(Q)$-irrational, and $F : G/\Gamma \to \C$ satisfies $\norm{F}_{\Lip(\X)} \leq Q$,
\item $\norm{f_\sml}_2 \leq \epsilon$,
\item $\norm{f_\unf}_{U^{s+1}} \leq 1/\mathcal{F}(Q)$, and
\item $f_\nil$ and $f_\nil + f_\sml$ take values in $[0,1]$.
\end{enumerate}
Furthermore, since $f_\nil$ is $[0,1]$-valued, we may assume that $F$ is real-valued by taking real parts, and then by replacing it with $\max(\min(F,1),0)$ we may in fact assume that it is also $[0,1]$-valued; neither of these operations can increase $\norm{F}_{\Lip(\X)}$.

Now let $\Lf = (\lf_1, \ldots, \lf_t)$ be any system of pairwise independent linear forms $\lf_i : \Z^D \to \Z$ of size at most $L$. Then by Theorem \ref{thm:GvNdiag} and \eqref{eqn:sol_L1} we have
\[ \abs{ \Sol_\Lf(f) - \Sol_\Lf(f_\nil) } \leq L \epsilon + L/\mathcal{F}(Q). \]
The parameter $\epsilon$ will play no further role, so let us fix already $\epsilon = \delta/3L$.\\
\indent We now deal with $\Sol_\Lf(f_\nil)$ using the periodic counting lemma, Theorem \ref{thm:periodic_counting}. The Lipschitz function we use is $F^{\otimes t} : G^t/\Gamma^t \to \C$, given by $F^{\otimes t}(x_1, \ldots, x_t) = F(x_1)\cdots F(x_t)$; note that this has $\norm{F^{\otimes t}}_{\Lip(\X^t)} \leq C_{Q,L}$ by Lemma \ref{lemma:tensor_Lip}. Applying the counting lemma to this function, with parameter $Q' := \max(L,s(L),Q, C_{Q,L})$ instead of $Q$, we obtain
\[ \Sol_\Lf(f_\nil) = \E_{\mathbf{n}\in \Zmod{N}^D} F^{\otimes t}(g^{\Lf}(\mathbf{n})\Gamma^t) = \int_{g(0)^\Delta\, G^\Lf/\Gamma^\Lf} F^{\otimes t} + o_{\mathcal{F}(Q) \to \infty;Q,L}(1). \]
We can choose $\mathcal{F}$ with sufficiently fast growth in terms of $\delta$ and $L$ to then have
\[ \abs{ \Sol_\Lf(f) - \int_{g(0)^\Delta\,G^\Lf/\Gamma^\Lf} F^{\otimes t} } \leq 2\delta/3. \]

We now transfer to the group $\Zmod{M}$. Let $A=A(Q')$ be large enough so that the error term in Theorem \ref{thm:periodic_counting} is at most $\delta/3$, and let $h \in \poly(\Z,G_\bullet)$ be the $M$-periodic, $A$-irrational polynomial given by Proposition \ref{prop:periodic_irr}, noting that we may assume $h(0)= g(0)$; provided $p_1(M)$ is large enough we can find this. Theorem \ref{thm:periodic_counting} then gives us
\[ \abs{ \int_{g(0)^\Delta\,G^\Lf/\Gamma^\Lf} F^{\otimes t} - \E_{\mathbf{n}\in \Zmod{M}^D} F^{\otimes t}(h^{\Lf}(\mathbf{n})\Gamma^t) } \leq \delta/3. \]
Setting $f' : \Zmod{M} \to [0,1]$, $f'(n) = F(h(n)\Gamma)$, the expectation above is precisely $\Sol_\Lf(f')$, so that
$ \abs{ \Sol_\Lf(f') - \Sol_\Lf(f) } \leq \delta $ as required.
\end{proof}

\section{Applications}\label{section:applications}

We are now ready to establish Theorems \ref{mincount} and \ref{maxdensity}. We begin with the former, which we restate now in the stronger form that was mentioned in the introduction. In the version below, the primality restriction on $N$ is replaced with the weaker requirement that $N$ belong to a set of characteristic 0 (recall Definition \ref{defn:char0}).
\enlargethispage{\baselineskip}
\begingroup
\makeatletter
\apptocmd{\thetheorem}{\unless\ifx\protect\@unexpandable@protect\protect\footnote{See \S\ref{subs:Manners} for an update providing a more general version of this theorem.}\fi}{}{}
\makeatother
\begin{theorem}\label{mincount*}
Let $\Lf$ be a system of integer linear forms, any two of which are linearly independent. Then for any $\alpha \in [0,1]$ there is a number $m_\Lf(\alpha)$ such that the sequence $m_\Lf(\alpha, N)\to m_\Lf(\alpha)$ as $N \to \infty$ through $\mathcal{N}$,
for any set $\mathcal{N}\subset \N$ of characteristic $0$.
\end{theorem}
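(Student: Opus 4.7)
The plan is to deduce Theorem \ref{mincount*} from the functional periodic transference theorem (Theorem \ref{thm:periodic_transference}) via a rounding argument. Since each value $m_\Lf(\alpha, N)$ lies in $[0,1]$, it suffices to show that $(m_\Lf(\alpha, N))_{N\in\mathcal{N}}$ is Cauchy, and by symmetry that for any $\delta > 0$ there is some $N_0$ such that $m_\Lf(\alpha, M) \leq m_\Lf(\alpha, N) + C_\Lf\, \delta$ for all $N, M \in \mathcal{N}$ with $N, M \geq N_0$, where $C_\Lf$ depends only on $\Lf$.

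Let $L_0$ be an integer large enough that both $\Lf$ and the one-form system $\Lf_0 := (n_1)$ have size at most $L_0$, and pick $A \subset \Zmod{N}$ with $|A| \geq \alpha N$ and $\Sol_\Lf(A) \leq m_\Lf(\alpha, N) + \delta$. Applying Theorem \ref{thm:periodic_transference} to $f = 1_A$ with parameters $L_0$ and $\delta$, permissible once $p_1(N), p_1(M)$ are large enough (which holds for large $N, M \in \mathcal{N}$ by the characteristic $0$ assumption), one obtains $f' : \Zmod{M} \to [0,1]$ satisfying both
\[
\bigl| \E_{\Zmod{M}} f' - |A|/N \bigr| \leq \delta \quad \text{and} \quad \bigl| \Sol_\Lf(f' : \Zmod{M}) - \Sol_\Lf(A : \Zmod{N}) \bigr| \leq \delta,
\]
the first coming from applying the transference bound to $\Lf_0$.

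The next step is to round $f'$ to the indicator of a set $B \subset \Zmod{M}$. I would sample $B$ by including each $x \in \Zmod{M}$ independently with probability $f'(x)$. Writing $\Lf = (\lf_1, \ldots, \lf_t)$, one has $\E|B|/M = \E f'$, and
\[
\E\, \Sol_\Lf(1_B) = \E_{\mathbf n \in \Zmod{M}^D} \E \prod_{i=1}^t 1_B(\lf_i(\mathbf n)) = \Sol_\Lf(f') + O_\Lf(1/p_1(M)),
\]
the error being due to the \emph{diagonal} tuples $\mathbf n$ for which $\lf_i(\mathbf n) = \lf_j(\mathbf n)$ for some $i \neq j$: pairwise linear independence of the $\lf_i$, combined with $p_1(M)$ being sufficiently large (relative to the coefficients of $\Lf$), guarantees that these have density $O_\Lf(1/p_1(M))$ in $\Zmod{M}^D$. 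A standard second-moment computation bounds the variances of $\Sol_\Lf(1_B)$ and $|B|/M$ by $O_\Lf(1/M)$ and $O(1/M)$ respectively, so for $M$ large Chebyshev's inequality yields a specific realization $B$ with $|B|/M \geq \alpha - 2\delta$ and $\Sol_\Lf(1_B) \leq \Sol_\Lf(A) + 3\delta$.

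Finally, if $|B| < \alpha M$, enlarge $B$ to a set $B' \supset B$ of size $\lceil \alpha M \rceil$ by adjoining at most $2\delta M$ arbitrary points of $\Zmod{M}$. This changes $\Sol_\Lf$ by at most $O_\Lf(\delta)$, since each new point $y$ lies in at most $t M^{D-1}$ tuples $(\lf_1(\mathbf n), \ldots, \lf_t(\mathbf n))$. Hence $m_\Lf(\alpha, M) \leq \Sol_\Lf(B') \leq m_\Lf(\alpha, N) + O_\Lf(\delta)$, and swapping $N$ and $M$ closes the argument. The main technical obstacle is the rounding step: controlling the diagonal contribution uses exactly the pairwise linear independence of the $\lf_i$, which is precisely the hypothesis under which Theorem \ref{thm:periodic_transference} is stated, so the two fit together seamlessly.
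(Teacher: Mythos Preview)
Your argument is correct and follows the same three-step skeleton as the paper's proof: apply periodic transference to $1_A$, round the resulting $f'$ to a set by random sampling, then pad the set up to density $\alpha$. The only substantive difference is in the rounding analysis. The paper packages the rounding as Lemma~\ref{MainF2S}, which produces a set $B'$ with $\|1_{B'}-f'\|_{U^{s+1}}\ll M^{-1/2^{s+1}}$, and then invokes the generalized von Neumann theorem (Theorem~\ref{thm:GvNdiag}) to translate this into control of $\Sol_\Lf$. You instead compute $\E\,\Sol_\Lf(1_B)$ and its variance directly, handling the diagonal contributions by hand using pairwise independence of the forms. Your route is more elementary and self-contained for a single $\Lf$; the paper's route is marginally cleaner in that the $U^{s+1}$ bound controls all systems of size at most $L$ simultaneously without further moment computations, and reuses machinery already in place. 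One small omission: showing $(m_\Lf(\alpha,N))_{N\in\mathcal{N}}$ is Cauchy for each fixed $\mathcal{N}$ gives a limit that \emph{a priori} depends on $\mathcal{N}$; the paper closes this by observing that $\mathcal{N}\cup\mathcal{N}'$ is again of characteristic $0$, forcing the limits to agree.
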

\endgroup
The proof involves the following standard result.
\begin{lemma}\label{MainF2S}
For any positive integer $d$ and any function $f:\Zmod{N}\to [0,1]$ there exists a set $A\subset \Zmod{N}$ such that $\norm{1_A-f}_{U^d(\Zmod{N})}\ll_d N^{-1/2^d}$.
\end{lemma}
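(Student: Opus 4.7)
My plan is a standard probabilistic construction: define the set $A$ by tossing, for each $x \in \Zmod{N}$, an independent biased coin, putting $x \in A$ with probability $f(x)$. Writing $g := 1_A - f$, we then have a family of independent random variables indexed by $\Zmod{N}$ with $\E[g(x)] = 0$ and $|g(x)| \leq 1$ pointwise. The goal is to show $\E \|g\|_{U^d}^{2^d} = O_d(1/N)$, after which Markov's inequality furnishes a deterministic choice of $A$ with the claimed bound $\|1_A - f\|_{U^d} \ll_d N^{-1/2^d}$.

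To compute $\E \|g\|_{U^d}^{2^d}$, I would expand the definition and interchange expectations: for each fixed $(x,h) \in \Zmod{N}^{d+1}$, evaluate
\[ \E_{A} \prod_{\varepsilon \in \{0,1\}^d} \mathcal{C}^{|\varepsilon|} g(x + \varepsilon \cdot h). \]
Grouping the $2^d$ factors according to the distinct values taken by $x + \varepsilon \cdot h$ and using independence, this expectation factors as a product over the distinct points $y$ of moments $\E[g(y)^k]$, where $k$ is the multiplicity of $y$ in the multiset $\{x + \varepsilon\cdot h : \varepsilon \in \{0,1\}^d\}$. Since $\E[g(y)] = 0$, any $y$ appearing with multiplicity exactly $1$ kills the whole product. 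Hence the only $(x,h)$ contributing are those for which every point $x + \varepsilon \cdot h$ has multiplicity at least $2$, and for each such tuple the contribution is bounded in absolute value by $1$ since $|g| \leq 1$.

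It remains to count the $(x,h)$ for which the $2^d$-multiset $\{x + \varepsilon \cdot h\}$ has no singleton. Any such tuple must, in particular, satisfy $(\varepsilon - \varepsilon') \cdot h \equiv 0 \pmod{N}$ for some $\varepsilon \neq \varepsilon' \in \{0,1\}^d$; since $\varepsilon - \varepsilon' \in \{-1,0,1\}^d$ is a nonzero vector with at least one coordinate equal to $\pm 1$, the equation $(\varepsilon - \varepsilon') \cdot h \equiv 0$ has exactly $N^{d-1}$ solutions $h \in \Zmod{N}^d$. Summing over the $O_d(1)$ pairs $(\varepsilon, \varepsilon')$ and over $N$ choices of $x$, the number of contributing tuples is $O_d(N^d)$. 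Normalising by the total of $N^{d+1}$ tuples gives $\E \|g\|_{U^d}^{2^d} = O_d(1/N)$, which by Markov produces a realisation $A$ with $\|1_A - f\|_{U^d}^{2^d} \ll_d 1/N$, as required.

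There is no real obstacle here; the only point demanding a moment of care is the counting step, where one must observe that for every $\varepsilon\neq\varepsilon'$ the difference $\varepsilon-\varepsilon'$ has at least one coordinate $\pm 1$, so that the linear equation it defines on $\Zmod{N}^d$ is non-degenerate for \emph{every} modulus $N$, not just primes; this avoids any issue with common factors.
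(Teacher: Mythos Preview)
Your proof is correct and is exactly the standard probabilistic argument the paper has in mind: the paper gives only a one-line sketch (``pick $A$ randomly, letting each $x$ lie in $A$ with probability $f(x)$ independently'') and refers to \cite[Exercise 11.1.17]{T-V}, and you have filled in precisely those details. The only cosmetic remark is that you do not actually need Markov's inequality: since $\|g\|_{U^d}^{2^d}\geq 0$ always, the bound $\E_A\|g\|_{U^d}^{2^d}=O_d(1/N)$ already guarantees a realisation achieving at most the expectation.
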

This follows easily from \cite[Exercise 11.1.17]{T-V}; one can prove it by picking the set $A$ randomly, letting each element $x \in \Zmod{N}$ lie in $A$ with probability $f(x)$ independently.

\begin{proof}[Proof of Theorem \ref{mincount*}.]
Suppose $\Lf$ has size at most $L$, and fix any $\alpha\in [0,1]$. We first establish convergence for any fixed $\mathcal{N}$ (to some limit possibly dependent on $\cN$).

Given such a set $\mathcal{N}$, fix an arbitrary $\epsilon\in(0,1)$. Let $N_0$ be the integer obtained by applying Theorem \ref{thm:periodic_transference} with $\delta=\epsilon/2(L+1)$, and let $s$ and $C_L$ be as given by Theorem \ref{thm:GvNdiag}. 
Let $N,M$ be any elements of $\mathcal{N}$ such that $N,M\gg_s (L/\delta)^{2^{s+1}}$ and $p_1(N),p_1(M)>\max\{L,C_L,N_0\}$, and let $A$ be a subset of $\Zmod{N}$ of size at least $\alpha N$ satisfying $\Sol_\Lf(A)=m_\Lf(\alpha,N)$. Then Theorem \ref{thm:periodic_transference} gives us a function $f' : \Zmod{M}\to [0,1]$ with $\E_{\Zmod{M}} f'\geq \alpha-\delta$ and such that $\Sol_\Lf(f') \leq m_\Lf(\alpha,N)+\delta$. Applying Lemma \ref{MainF2S} to $f'$ with $d=s+1$, we obtain a subset $B'$ of $\Zmod{M}$ such that $\norm{1_{B'}-f'}_{U^{s+1}(\Zmod{M})}\leq \delta/L$. We then have $\abs{B'}\geq (\alpha-2\delta)M$, since $\abs{\E_{\Zmod{M}} (f'-1_{B'})} \leq \norm{f'-1_{B'}}_{U^{s+1}(\Zmod{M})}$. Moreover, Theorem \ref{thm:GvNdiag} gives us that $\Sol_\Lf(B') \leq m_\Lf(\alpha,N)+2\delta$. Now we add at most $2\delta M$ elements from $\Zmod{M}\setminus B'$ to $B'$, obtaining a set $B\subset \Zmod{M}$ of size at least $\alpha M$ which, by \eqref{eqn:sol_L1}, satisfies $\Sol_\Lf(B)\leq \Sol_\Lf(B')+2\delta L$. It follows that $\Sol_\Lf(B)\leq m_\Lf(\alpha,N)+2(L+1)\delta$, whence $m_\Lf(\alpha,M)\leq m_\Lf(\alpha,N)+\epsilon$. Arguing the same way with $N$ and $M$ interchanged, we obtain $m_\Lf(\alpha,N)\leq m_\Lf(\alpha,M)+\epsilon$. Thus 
 $\big( m_\Lf(\alpha,N) \big)_{N\in\mathcal{N}}$ is a Cauchy sequence.

Now if $\mathcal{N}'$ is another set of characteristic $0$, then noting that $\mathcal{N}\cup \mathcal{N}'$ is also of characteristic 0, we deduce that the limit of $(m_\Lf(\alpha,N))$ for $\mathcal{N}'$ is equal to that for $\mathcal{N}$.
\end{proof}

We now turn to Theorem \ref{maxdensity}, which we shall establish in the following stronger form.

\begin{theorem}\label{maxdensity*}
Let $\mathcal{F}$ be a finite family of systems of integer linear forms, in each of which the forms are pairwise linearly independent. Then there is
a number $d_\mathcal{F}$ such that $d_\mathcal{F}(\Zmod{N})\to d_\mathcal{F}$ as $N\to\infty$ through $\mathcal{N}$, for any set $\mathcal{N}\subset \N$ of characteristic $0$.
\end{theorem}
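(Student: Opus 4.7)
The plan is to follow essentially the strategy used for Theorem~\ref{mincount*}, combining the periodic transference theorem with a rounding step, and to convert from ``very few configurations'' back to ``no configurations'' by invoking an arithmetic removal lemma for systems of linear forms of finite complexity. Fix $\epsilon>0$, let $L$ bound the size of every system in $\mathcal{F}$, and let $s=s(L)$ be the integer given by Theorem~\ref{thm:GvNdiag}. Pick a small auxiliary parameter $\delta>0$ to be chosen later in terms of $\epsilon$, $L$ and $\mathcal{F}$. Let $N,M\in\mathcal{N}$ both have $p_1(N),p_1(M)$ larger than the threshold required in Theorems~\ref{thm:periodic_transference} and~\ref{thm:GvNdiag}.

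Start with an $\mathcal{F}$-free set $A\subset\Zmod{N}$ realizing $|A|/N\geq d_\mathcal{F}(\Zmod{N})-\epsilon$. Applying Theorem~\ref{thm:periodic_transference} to $f=1_A$ produces $f':\Zmod{M}\to[0,1]$ with $|\Sol_\Lf(f':\Zmod{M})-\Sol_\Lf(1_A:\Zmod{N})|\leq\delta$ for every $\Lf\in\mathcal{F}$, and with $|\E_{\Zmod{M}}f'-|A|/N|\leq\delta$ (applied to the one-form system). Since $A$ is $\mathcal{F}$-free, $\Sol_\Lf(1_A:\Zmod{N})$ consists only of trivial degenerate contributions of size $O_L(1/N)$, so for $N$ large we get $\Sol_\Lf(f')\leq 2\delta$ for each $\Lf\in\mathcal{F}$, and $\E f'\geq d_\mathcal{F}(\Zmod{N})-\epsilon-\delta$.

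Now round $f'$ to an indicator using Lemma~\ref{MainF2S} with $d=s+1$: for $M$ large we obtain a set $B'\subset\Zmod{M}$ with $\|1_{B'}-f'\|_{U^{s+1}(\Zmod{M})}\leq\delta/L$. By Theorem~\ref{thm:GvNdiag} this gives $\Sol_\Lf(B')\leq 3\delta$ for every $\Lf\in\mathcal{F}$, and $|B'|/M\geq\E f'-\delta/L$. At this point $B'$ is almost $\mathcal{F}$-free but not quite; we invoke the arithmetic removal lemma for systems of integer linear forms of finite complexity on cyclic groups (available through the work of Král'--Serra--Vena and Shapira, or via the higher-order Fourier framework). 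For each $\Lf\in\mathcal{F}$ it yields a function $\eta_\Lf(x)\to 0$ as $x\to 0$ such that at most $\eta_\Lf(3\delta)M$ elements of $B'$ must be removed in order to destroy all $\Lf$-configurations. Removing these points for every $\Lf\in\mathcal{F}$ (finitely many) produces an $\mathcal{F}$-free set $B\subset\Zmod{M}$ with
\[
|B|/M\;\geq\;d_\mathcal{F}(\Zmod{N})-\epsilon-\delta-\delta/L-\sum_{\Lf\in\mathcal{F}}\eta_\Lf(3\delta).
\]
Choosing $\delta$ sufficiently small in terms of $\epsilon,L,\mathcal{F}$ ensures $d_\mathcal{F}(\Zmod{M})\geq d_\mathcal{F}(\Zmod{N})-2\epsilon$. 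Interchanging the roles of $N$ and $M$ shows $(d_\mathcal{F}(\Zmod{N}))_{N\in\mathcal{N}}$ is Cauchy and hence convergent. As in the proof of Theorem~\ref{mincount*}, taking the union of two characteristic-$0$ sets shows the limit is independent of~$\mathcal{N}$.

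The main obstacle is the removal step: one needs an arithmetic removal lemma that is quantitatively uniform in $M$ for every system $\Lf$ of finite complexity arising from $\mathcal{F}$. The other ingredients (transference, random rounding, and the generalized von~Neumann control) have already been set up; the rest is bookkeeping on the parameters $\delta$ and a careful handling of the finitely many systems $\Lf\in\mathcal{F}$ simultaneously.
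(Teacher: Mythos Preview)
Your approach is essentially the paper's own: transference (Theorem~\ref{thm:periodic_transference}) followed by random rounding (Lemma~\ref{MainF2S}) and then the arithmetic removal lemma of Kr\'al'--Serra--Vena to pass from ``few configurations'' back to ``none'', with a final Cauchy argument and the union-of-families trick to show independence of $\mathcal{N}$. Two small remarks: with the paper's definition of $\Lf$-free we have $\Sol_\Lf(1_A)=0$ exactly, so there are no ``trivial degenerate contributions'' to worry about; and the removal lemma you flag as the main obstacle is precisely Theorem~\ref{removal} here (deduced from \cite{KSVGR}), whose coprimality condition on $M$ is automatically met once $p_1(M)$ exceeds the relevant constant $K$ for each of the finitely many systems in $\mathcal{F}$.
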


We shall use the following result, known as an \emph{arithmetic removal lemma}, which follows from the more general removal lemma of Kr\'al, Serra, and Vena \cite{KSVGR}.

\begin{theorem}\label{removal}
Let $\Lf$ be a system of linear forms $\lf_1,\ldots,\lf_t:\Z^D\to \Z$. Then there exists a positive integer $K$ such that the following holds. For any $\epsilon>0$, there exists $\delta=\delta(\epsilon,\Lf)>0$ such that if $N\in \N$ is prime to $K$, and 
$A_1,\ldots, A_t$ are subsets of $\Zmod{N}$ such that $\Sol_\Lf(A_1,\ldots,A_t)\leq \delta$, then there exist sets $E_i\subset \Zmod{N}$ with $\abs{E_i}\leq \epsilon N$ for all $i\in [t]$, such that $\Sol_\Lf(A_1\setminus E_1,\ldots,A_t\setminus E_t)=0$.
\end{theorem}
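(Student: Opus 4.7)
The plan is to derive this statement as a direct consequence of the general removal lemma of Kr\'al, Serra, and Vena \cite{KSVGR}, which handles systems of integer linear equations over arbitrary finite abelian groups. The task amounts to translating the hypothesis that $\Sol_\Lf(A_1,\ldots,A_t)$ is small into the smallness of a normalized solution count for a single fixed linear system, uniformly in $N$.

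First, I would perform an algebraic reduction via Smith normal form. Viewing the coefficient matrix of $\Lf$ as an integer matrix $L \in \Z^{t \times D}$, there exist unimodular $V \in \mathrm{GL}_t(\Z)$ and $U \in \mathrm{GL}_D(\Z)$ with $V L U = \mathrm{diag}(d_1, \ldots, d_r, 0, \ldots, 0)$, where $r = \mathrm{rank}(L)$ and $d_1 \mid \cdots \mid d_r$ are positive integers depending only on $\Lf$. I would set $K := d_1 d_2 \cdots d_r$. When $\hcf(N,K) = 1$ every $d_i$ is invertible in $\Zmod{N}$, so the image $L(\Zmod{N}^D) \subset \Zmod{N}^t$ is a subgroup of size $N^r$, and it coincides with the common zero-set of the last $t-r$ rows of the dual data, giving a fixed linear system $\mathbf{M} \mathbf{y} \equiv 0 \pmod N$ with $\mathbf{M}$ an integer matrix depending only on $\Lf$.

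Second, I would record the counting correspondence: under the coprimality $\hcf(N,K)=1$, each fibre of $L \colon \Zmod{N}^D \to L(\Zmod{N}^D)$ has size exactly $N^{D-r}$, so that
\[
\Sol_\Lf(A_1, \ldots, A_t) \;=\; \frac{1}{N^r}\,\bigl| \{\mathbf{a} \in A_1 \times \cdots \times A_t : \mathbf{M}\mathbf{a} \equiv 0 \pmod N\} \bigr|.
\]
Thus ``$\Sol_\Lf \le \delta$'' is equivalent, up to a normalization depending only on $\Lf$, to smallness of the normalized number of solutions to $\mathbf{M}\mathbf{y} \equiv 0$ with $y_i \in A_i$.

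Third, I would invoke the Kr\'al--Serra--Vena removal lemma on the linear system $\mathbf{M}\mathbf{y} \equiv 0$ over the abelian group $\Zmod{N}$: given $\epsilon > 0$, there exists $\delta'(\epsilon, \mathbf{M}) > 0$ such that if the normalized solution count with $y_i \in A_i$ is at most $\delta'$, then one can find $E_i \subset A_i$ with $\abs{E_i} \le \epsilon N$ such that no solution survives in $\prod_i (A_i \setminus E_i)$. Since $\mathbf{M}$ depends only on $\Lf$, this gives the desired $\delta = \delta(\epsilon, \Lf)$. Because $L(\Zmod{N}^D) = \{\mathbf{a} : \mathbf{M}\mathbf{a} \equiv 0\}$, the conclusion $\Sol_\Lf(A_1 \setminus E_1, \ldots, A_t \setminus E_t) = 0$ is immediate. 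The main obstacle is verifying that the constants in \cite{KSVGR} can be extracted so as to depend only on $\mathbf{M}$ and $\epsilon$, and not on the specific cyclic group $\Zmod{N}$; this uniformity is exactly what is needed for a single $\delta(\epsilon, \Lf)$ to work for all $N$ coprime to $K$, and it is the place where the coprimality hypothesis enters crucially, keeping the Smith-normal-form description of $L(\Zmod{N}^D)$ stable across varying $N$.
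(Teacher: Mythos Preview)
Your proposal is correct and follows essentially the same route as the paper: both arguments reduce to the Kr\'al--Serra--Vena removal lemma by constructing an integer matrix whose kernel over $\Zmod{N}$ coincides with $\Lf(\Zmod{N}^D)$ whenever $N$ is coprime to a fixed $K$. The only cosmetic difference is that you obtain this matrix via the Smith normal form of the coefficient matrix of $\Lf$, whereas the paper uses the structure theorem for the finitely generated group $\Z^t/\Lf(\Z^D)$; these are equivalent descriptions, and the uniformity in $N$ that you flag is indeed provided by \cite{KSVGR}.
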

The proof is a straightforward deduction, given in Appendix \ref{appendix:removal}.

\begin{proof}[Proof of Theorem \ref{maxdensity*}]
Let $n$ be the cardinality of the given finite family $\mathcal{F}$, and let $L$ be a uniform upper bound on the sizes of the systems in $\mathcal{F}$. As in the proof of Theorem \ref{mincount*}, it suffices to establish convergence for any given $\mathcal{N}$.

Having fixed $\mathcal{N}$, let us fix an arbitrary $\epsilon>0$. Let $\delta \in (0, \epsilon/2)$ be such that Theorem \ref{removal} holds for each $\Lf\in\mathcal{F}$, with initial parameter $\epsilon/2n$. Now let $C=C(\epsilon,L)$ be such that Theorem \ref{thm:periodic_transference} and Lemma \ref{MainF2S} hold with main upper bound $\delta/2L$, for any $N,M\in \mathcal{N}$ with $p_1(N),p_1(M) > C$. We claim that $\abs{ \md_\mathcal{F}(\Zmod{N}) - \md_\mathcal{F}(\Zmod{M}) } \leq \epsilon$ for any such $N,M$.

To see this let $\alpha= \md_\mathcal{F}(\Zmod{N})$ and let $A\subset \Zmod{N}$ be $\mathcal{F}$-free with $\abs{A}=\alpha N$. Then by Theorem \ref{thm:periodic_transference} there exists $f' : \Zmod{M}\to [0,1]$ with $\abs{\E_{\Zmod{M}} f'-\alpha}\leq \delta/2$ such that $\Sol_\Lf(f') \leq \delta/2$ for every $\Lf \in\mathcal{F}$. Applying Lemma \ref{MainF2S} to $f'$, we obtain a subset $B'$ of $\Zmod{M}$ with $\norm{1_{B'}-f'}_{U^{s+1}}\leq \delta/2L$, where $s=s(L)$ is as given by Theorem \ref{thm:GvNdiag}. As in the proof of Theorem \ref{mincount*}, this then implies $\abs{\alpha-\abs{B'}/M}\leq \delta$, and Theorem \ref{thm:GvNdiag} also gives that $\Sol_\Lf(B') \leq \delta$ for every $\Lf\in\mathcal{F}$.

Now, by our choice of $\delta$, Lemma \ref{removal} applied to each $\Lf\in\mathcal{F}$ gives us an $\mathcal{F}$-free subset $B$ of $B'$ with $\abs{\alpha-\abs{B}/M}\leq \delta+\epsilon/2$, whence $\md_\mathcal{F}(\Zmod{M})\geq \md_\mathcal{F}(\Zmod{N})-\epsilon$. Arguing similarly with $N,M$ interchanged, our claim follows. Thus $\big(\md_\mathcal{F}(\Zmod{N})\big)_{N\in\mathcal{N}}$ is a Cauchy sequence.
\end{proof}

We close this section with the following result mentioned in the introduction.
\begin{lemma}\label{lemma:non-invariant-free-sets}
Let $\mathcal{F}$ be a finite family of non-invariant systems of integer linear forms, in each of which the forms are pairwise independent. Then
$d_\mathcal{F}>0$.
\end{lemma}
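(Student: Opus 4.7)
The plan is to exhibit, for each sufficiently large $N$, a discrete interval $A\subset\Zmod{N}$ whose density is bounded below by a constant depending only on $\mathcal{F}$ and which is automatically $\mathcal{F}$-free; this immediately yields $\md_\mathcal{F}>0$.

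The first step is to translate non-invariance into an algebraic relation. For each $\Lf=(\lf_1,\ldots,\lf_t)\in\mathcal{F}$, the condition $(1,\ldots,1)\notin\Lf(\Q^D)$ means, by $\Q$-linear duality in $\Q^t$, that there is a rational (hence, after clearing denominators, integer) vector $c=(c_1,\ldots,c_t)$ with $\sum_i c_i\lf_i\equiv 0$ as a linear form on $\Z^D$ and yet $s_\Lf:=\sum_i c_i\neq 0$. I would fix one such relation $c^{(\Lf)}$ for each $\Lf\in\mathcal{F}$, and write $C:=\max_{\Lf\in\mathcal{F}}\norm{c^{(\Lf)}}_1$.

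Next I would pick $\alpha\in(0,1)$ satisfying $\norm{s_\Lf\,\alpha}_{\R/\Z}\geq\delta$ simultaneously for every $\Lf\in\mathcal{F}$, where $\delta>0$ is chosen small, say $\delta<1/(4|\mathcal{F}|)$, so that a union bound---each condition excluding a set of Lebesgue measure $2\delta$---leaves a positive-measure set of admissible $\alpha$ (which one may further intersect with $(0,1-\eta)$ to avoid wrap-around below). Setting $\eta:=\delta/(2C)$, I would take $A\subset\Zmod{N}$ to be the interval $\{\lfloor\alpha N\rfloor,\ldots,\lfloor\alpha N\rfloor+\lfloor\eta N\rfloor-1\}$, whose density is at least $\eta/2$ once $N$ is sufficiently large.

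To verify $\mathcal{F}$-freeness, I would suppose for contradiction that $\Lf(\mathbf{n})\in A^t$ for some $\Lf\in\mathcal{F}$ and $\mathbf{n}\in\Zmod{N}^D$. Lifting $\mathbf{n}$ to $\Z^D$ and applying the relation $\sum_i c_i^{(\Lf)}\lf_i\equiv 0$, the standard representatives $x_i\in[0,N)$ of the coordinates of $\Lf(\mathbf{n})$ satisfy $\sum_i c_i^{(\Lf)}x_i\in N\Z$. But each $x_i/N$ lies in $[\alpha-1/N,\alpha+\eta]$, so $\sum_i c_i^{(\Lf)}x_i/N$ lies in an interval around $s_\Lf\alpha$ of half-width at most $C\eta+C/N<\delta$ for $N$ large, and by the choice of $\alpha$ this interval avoids $\Z$---a contradiction. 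The argument is entirely elementary; the only point requiring any care is the $O(1/N)$ discretization error in this final computation, which is harmlessly absorbed into $\delta$ for $N$ sufficiently large.
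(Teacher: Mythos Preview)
Your proof is correct and is precisely the Ruzsa-type interval construction that the paper also relies on: you extract from non-invariance an integer relation $\sum_i c_i\lf_i=0$ with $\sum_i c_i\neq 0$, then observe that a short interval placed at a well-chosen $\alpha N$ cannot contain solutions. The paper's proof is structurally identical---it first reduces each system to a single non-invariant linear equation (picking one row of the associated matrix $\Lambda$ with nonzero coefficient sum), then invokes \cite[Proposition 1.4]{candela-sisask}, whose proof \emph{is} this interval construction---so you have simply written out the cited argument directly rather than appealing to it.
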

\begin{proof}
After converting from systems of linear forms to systems of linear equations, this lemma follows immediately from a similar result for families of single equations. The latter result was recorded as \cite[Proposition 1.4]{candela-sisask} and its proof consisted in a simple construction based on an idea employed by Ruzsa \cite[Theorem 2.1]{Ruzsa}. To convert to systems of equations, then, assign to each $\Lf\in\mathcal{F}$ an integer matrix $\Lambda$ as in the proof of Theorem \ref{removal}. Note that the pairwise independence condition implies that any row of any such $\Lambda$ has at least three non-zero coefficients. Labeling these matrices $\Lambda_1,\Lambda_2,\ldots,\Lambda_n$, we now form a family 
of non-invariant integer linear forms $L_1,\ldots,L_n$, by defining the coefficients of $L_i$ to be the entries in a chosen row of $\Lambda_i$ not summing to 0. It is clear that $d_\mathcal{F}(\Zmod{N})$ is always at least the maximum density of a subset of $\Zmod{N}$ avoiding the equations $L_i(x) =0$, $i\in [n]$, so we are done.
\end{proof}

\section{Remarks}\label{section:remarks}

If we consider vector spaces over finite fields instead of cyclic groups then the analogue of Theorem \ref{pertranfn} is a more exact statement telling us that for any $n$ and $m\geq n$ we can transfer a function on $\mathbb{F}^n$ to one on $\mathbb{F}^m$ having equal solution-measures for any $\Lf$. In contrast with Theorem \ref{pertranfn}, however, the latter result is rather trivial due to the fact that $\F^n$ can be embedded as a subgroup of $\F^m$. This indicates that the finite-field viewpoint is less useful here than it is for several other well-known problems in additive combinatorics, the non-triviality of Theorem \ref{pertranfn} being more strongly related to the cyclic group setting, in which there can be a complete lack of non-trivial subgroups.

The questions of the convergence of the minimal solution measures and the analogues of $d_{\mathcal{F}}$ are also interesting for the integral setting of $[N]$ (once defined appropriately); indeed, the latter question was raised for single linear equations by Ruzsa \cite{Ruzsa}. One may establish a transference result in this setting (and thus convergence) relatively straightforwardly from Green and Tao's results in \cite{GTarith}: one can follow the structure of the proof of Theorem \ref{pertranfn}, except that to obtain $f' : [M] \to [0,1]$ (for $M \geq N$) one can simply extend the domain of definition of $f_\nil$, the main point being that since $f_\nil$ equidistributes well already up to time $N$, it automatically does so up to time $M$ as well. 

In the setting of Croot's original convergence result \cite{croot:3APminconv}, there is a nice relation between the minimum and maximum possible counts of 3-term progressions in sets of various densities, thanks to the formula
$\Sol_\threeAP(A) + \Sol_\threeAP(A^c) = 1 - 3\alpha + 3\alpha^2$
for sets $A \subset \Zmod{N}$ of density $\alpha$ (provided $N$ is odd). Thus a set has the minimal number of 3-term progressions for sets of density $\alpha$ if and only if its complement has the maximal number for sets of density $1-\alpha$. From this it is immediate that the quantity $M_\threeAP(\alpha, N) := \max_{ A \subset \Zmod{N},\, \abs{A} \leq \alpha N} \Sol_\threeAP(A)$ also converges as $N \to \infty$ over primes. There is a similar relation for solution counts of other single linear equations in an odd number of variables, but for more general systems of equations no such relation holds. Nevertheless, the methods of this paper do of course allow one to deduce convergence in this regime: 

\begin{theorem}
Let $\Lf$ be a system of integer linear forms, any two of which are linearly independent. For any $\alpha\in[0,1]$ and $N\in \N$, let $M_\Lf(\alpha, N) := \max_{A \subset \Zmod{N},\, \abs{A} \leq \alpha N} \Sol_\Lf(A)$.
Then $M_\Lf(\alpha, N)$ converges as $N \to \infty$ through primes.
\end{theorem}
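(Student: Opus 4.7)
The plan is to mirror the proof of Theorem \ref{mincount*}, using Theorem \ref{thm:periodic_transference} as the bridge between two cyclic groups. As in that proof, I would first fix a set $\mathcal{N}\subset\N$ of characteristic $0$ and show that $\bigl(M_\Lf(\alpha,N)\bigr)_{N\in\mathcal{N}}$ is a Cauchy sequence; since the union of two characteristic-$0$ sets is again of characteristic $0$, the resulting limit will not depend on $\mathcal{N}$.

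The concrete step I would carry out is the following. Fix $\epsilon>0$, set $\delta := \epsilon/(2(L+1))$ where $L$ bounds the size of $\Lf$, and let $s=s(L)$ and $C_L$ be as in Theorem \ref{thm:GvNdiag}. For $N,M\in\mathcal{N}$ with $p_1(N),p_1(M)$ large enough in terms of $\delta$ and $L$, I would take $A\subset\Zmod{N}$ with $\abs{A}\leq\alpha N$ realising $\Sol_\Lf(A)=M_\Lf(\alpha,N)$, apply Theorem \ref{thm:periodic_transference} to $1_A$ to obtain $f':\Zmod{M}\to[0,1]$ with $\bigl|\E_{\Zmod{M}} f' - \abs{A}/N\bigr|\leq\delta$ and $\Sol_\Lf(f')\geq M_\Lf(\alpha,N)-\delta$, and then round to a set $B'\subset\Zmod{M}$ using Lemma \ref{MainF2S} (with $d=s+1$). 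The resulting $B'$ satisfies $\abs{B'}/M \leq \alpha + 2\delta$ and, by Theorem \ref{thm:GvNdiag}, $\Sol_\Lf(B')\geq M_\Lf(\alpha,N)-2\delta$.

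The only substantive difference from the minimum-count case is the final density adjustment: here I would \emph{remove} at most $2\delta M$ elements from $B'$ (rather than add) to obtain a set $B\subset\Zmod{M}$ with $\abs{B}\leq\alpha M$. By \eqref{eqn:sol_L1} this can decrease the solution measure by at most $2\delta L$, yielding $\Sol_\Lf(B)\geq M_\Lf(\alpha,N)-\epsilon$, and hence $M_\Lf(\alpha,M)\geq M_\Lf(\alpha,N)-\epsilon$. Swapping the roles of $N$ and $M$ gives the reverse inequality, so the sequence is Cauchy and the theorem follows, the limit being independent of $\mathcal{N}$ by the usual union argument.

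I do not expect any real obstacle here: the argument is a routine adaptation of the proof of Theorem \ref{mincount*}, and the only point to verify is that the density-correction step at the end (now a deletion rather than an insertion) costs at most $O(L\delta)$ in the solution measure. This is precisely what \eqref{eqn:sol_L1} provides, and no new analytic input is required beyond the machinery already established in the paper.
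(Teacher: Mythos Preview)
Your proposal is correct and is precisely the argument the paper has in mind: the paper does not spell out a proof of this theorem but simply states that ``the methods of this paper do of course allow one to deduce convergence in this regime,'' and your adaptation of the proof of Theorem~\ref{mincount*}---transferring via Theorem~\ref{thm:periodic_transference}, rounding via Lemma~\ref{MainF2S}, and then deleting (rather than adding) $O(\delta M)$ elements at a cost controlled by \eqref{eqn:sol_L1}---is exactly that routine modification.
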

\noindent These quantities are very natural from a combinatorial perspective, as they capture how structured a set of a given density can be. It would be interesting to know more about these limits in general.

It would also be interesting to identify `limit objects' on which to study quantities such as $m_\Lf(\alpha)$ and $d_\Lf$ directly, for a general system $\Lf$ of finite complexity. For systems $\Lf$ corresponding to a single linear equation, the circle group is already known to be a suitable limit object, in that $m_\Lf(\alpha) = m_\Lf(\alpha, \T)$ and $d_\Lf = d_\Lf(\T)$, where these quantities are defined naturally in terms of measurable subsets of $\T$ (see \cite{candela-sisask, olof-thesis}). For more general systems, one possibility would be to have, for each value of $s\in \N$, a single space $X_s$ on which $d_\Lf$ can be studied directly for any system $\Lf$ of complexity\footnote{This notion of complexity was defined in \cite{Gowers-Wolf:complexity}.} at most $s$ (in particular we would have $X_1=\T$). One may expect to characterize such a space $X_s$ in terms of nilmanifolds of degree at most $s$. 

Finally, let us note that one may obtain periodic analogues of the equidistribution results of \cite{GTOrb}, namely \cite[Theorems 1.16 \& 1.19]{GTOrb}, by similar considerations to those in this paper; we omit the details.

\subsection{Update}\label{subs:Manners}
Since the completion of this paper, Manners has released a preprint \cite{Manners} the main result of which affords a simplification in several results of this paper. The preprint concerns the inverse theorems for the Gowers norms, mentioned in Section \ref{section:inverse}. Of the known approaches to these theorems, in particular that of Green, Tao and Ziegler \cite{GTZ}, and that of Szegedy and Camarena-Szegedy \cite{Cam-Szeg, SzegedyHFA}, we adopted the latter's result (Theorem \ref{thm:perinverse}), as this ensured a particular periodicity property of the nilsequence in the theorem, a feature not present or needed in \cite{GTZ} but crucial for us. Manners showed, however, that one can deduce a periodic inverse theorem slightly stronger than Theorem \ref{thm:perinverse} directly from the inverse theorem of \cite{GTZ}. The strengthening, \cite[Theorem 1.4]{Manners}, consists in removing the need to fix a characteristic-$0$ family, working instead for all integers $N$. This obviates the need for considering such families in our results, meaning that the only condition on the orders of the cyclic groups is concerned directly with their smallest prime factors. Thus, for example, in Theorem \ref{thm:periodic_regularity}---the periodic regularity lemma with irrational nilsequence---one can simply set $\mathcal{N} = \N$, effectively removing all mention of $\mathcal{N}$, and similarly in Theorem \ref{thm:periodic_transference}. This in turn leads to the following version of our convergence result Theorem \ref{mincount*}.

\begin{theorem}\label{mincount**}
Let $\Lf$ be a system of integer linear forms, any two of which are linearly independent, and let $\alpha \in [0,1]$. Then $m_\phi(\alpha, N)$ converges as $p_1(N)\to \infty$. 
\end{theorem}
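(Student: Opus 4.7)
The plan is to follow the proof of Theorem \ref{mincount*} essentially verbatim, exploiting the fact that Manners's strengthening of the periodic inverse theorem removes the need to restrict to a characteristic-$0$ family $\mathcal{N}$. The condition ``$N \in \mathcal{N}$'' is to be replaced throughout by the cleaner condition ``$p_1(N)$ sufficiently large,'' and the outer reconciliation-of-families step at the end of the proof of Theorem \ref{mincount*} simply disappears.

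First I would verify that substituting \cite[Theorem 1.4]{Manners} for Theorem \ref{thm:perinverse} into the derivations of Sections \ref{section:inverse}--\ref{section:transference} yields a strengthened transference theorem of the following form: for any $L \geq 1$ and $\delta \in (0,1)$ there is $N_0 = N_0(\delta, L)$ such that whenever $p_1(N), p_1(M) \geq N_0$ and $f \colon \Zmod{N} \to [0,1]$, there exists $f' \colon \Zmod{M} \to [0,1]$ with $\abs{\Sol_\Lf(f' : \Zmod{M}) - \Sol_\Lf(f : \Zmod{N})} \leq \delta$ for every pairwise-independent $\Lf$ of size at most $L$. This amounts to inspecting each ingredient of the proof of Theorem \ref{thm:periodic_transference} and checking that the only step that genuinely uses a fixed characteristic-$0$ family is the invocation of Theorem \ref{thm:perinverse}: the factorization result Proposition \ref{prop:full_fact}, the counting lemma Theorem \ref{thm:periodic_counting}, and the construction Proposition \ref{prop:periodic_irr} all require only that the relevant smallest prime factor exceed a threshold. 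The non-irrational regularity lemma, Theorem \ref{thm:periodic_regularity_non_irrational}, is deduced from the inverse theorem by standard $U^{s+1}$-energy-increment arguments which carry over unchanged, and these statements combine into the irrational regularity lemma Theorem \ref{thm:periodic_regularity} in its $\mathcal{N}$-free form.

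With this strengthened transference theorem in hand, the Cauchy argument is a direct transcription. Given $\epsilon \in (0,1)$, set $\delta = \epsilon/2(L+1)$ and let $s = s(L), C_L$ be as in Theorem \ref{thm:GvNdiag}. For $N, M$ with $p_1(N), p_1(M) \geq \max(N_0(\delta,L), C_L, L)$ and with $N, M \gg_s (L/\delta)^{2^{s+1}}$, take $A \subset \Zmod{N}$ of size $\geq \alpha N$ realizing $m_\Lf(\alpha, N)$, apply the transference theorem to $1_A$ to produce $f' \colon \Zmod{M} \to [0,1]$ of mean $\geq \alpha - \delta$ and with $\Sol_\Lf(f') \leq m_\Lf(\alpha, N) + \delta$, then apply Lemma \ref{MainF2S} with $d = s+1$ to obtain a set $B'$ with $\norm{1_{B'} - f'}_{U^{s+1}} \leq \delta/L$, and augment $B'$ by at most $2\delta M$ elements to reach a set $B$ of density $\geq \alpha$. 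Theorem \ref{thm:GvNdiag} and \eqref{eqn:sol_L1} then give $\Sol_\Lf(B) \leq m_\Lf(\alpha, N) + \epsilon$, so $m_\Lf(\alpha, M) \leq m_\Lf(\alpha, N) + \epsilon$; swapping $M$ and $N$ completes the Cauchy estimate and hence convergence as $p_1(N) \to \infty$.

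The main obstacle is not in this deduction but in the preceding propagation step: one must check that no stage of the regularity/counting/transference machinery covertly uses the characteristic-$0$ restriction beyond the single appeal to the inverse theorem. Given the way the results are stated and proved in Sections \ref{section:inverse}--\ref{section:transference} (in particular, with the orbit-side ingredients phrased already in terms of a hypothesis on $p_1$), this inspection is essentially immediate, but it is the only conceptually nontrivial element of the argument; the remainder transcribes from the proof of Theorem \ref{mincount*} with only cosmetic changes.
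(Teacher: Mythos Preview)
Your proposal is correct and follows exactly the approach indicated in the paper: the paper's own treatment of Theorem~\ref{mincount**} is just the one-paragraph sketch in \S\ref{subs:Manners}, observing that Manners's strengthened inverse theorem lets one set $\mathcal{N}=\N$ in Theorems~\ref{thm:periodic_regularity} and~\ref{thm:periodic_transference} and then rerun the proof of Theorem~\ref{mincount*}. You have simply spelled this out in more detail, including the (correct) verification that the characteristic-$0$ hypothesis enters only through the inverse theorem.
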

\newpage

\appendix

\section{Mal'cev bases}\label{appendix:Malcev}

This appendix gathers some technical tools on Mal'cev bases and related notions.

\begin{defn}[Mal'cev basis]
Let $(G/\Gamma, G_\bullet)$ be an $m$-dimensional filtered nilmanifold. A basis $\X = \{X_1,\ldots,X_m\}$ for the Lie algebra $\mathfrak{g}$ over $\R$ is called a \emph{Mal'cev basis} for $G/\Gamma$ adapted to $G_\bullet$ if the following conditions are satisfied.
\begin{enumerate}
\item For each $j \in [0, m-1]$ the subspace $\mathfrak{h}_j := \Span(X_{j+1},\ldots, X_m)$ is an ideal in $\mathfrak{g}$, and hence
$H_j := \exp \mathfrak{h}_j$ is a normal Lie subgroup of $G$.
\item For every $i\in [0,s]$ we have $G_i = H_{m-m_i}$.
\item \label{coords} Each $g \in G$ can be written uniquely as $\exp( t_1 X_1) \exp(t_2 X_2) \cdots \exp( t_m X_m)$, for
$t_i \in \R$.
\item $\Gamma=\{ \exp( t_1 X_1) \exp(t_2 X_2) \cdots \exp( t_m X_m) : t_i\in \Z\}$.
\end{enumerate}
\end{defn}

The Mal'cev coordinate map $\psi:G\to \R^m$ referred to in Section \ref{section:background} is then just the map sending $g \in G$ to its corresponding tuple $(t_1,\ldots,t_m) \in \R^m$ from \eqref{coords} above. 

Given a basis $\X$ on $G/\Gamma$, the following result describes a natural Mal'cev basis for a power $G^t/\Gamma^t$. This will enable us to relate Lipschitz norms on these two manifolds as needed in the proof of Theorem \ref{thm:periodic_transference}. 

\begin{lemma}\label{Xt}
Let $(G/\Gamma,G_\bullet)$ be an $m$-dimensional filtered nilmanifold of degree at most $s$ with a $Q$-rational Mal'cev basis $\X=\{X_1,\ldots, X_m\}$, and let $t\in \N$. Let $\mathfrak{g}^t$ denote the direct sum of $t$ copies of $\mathfrak{g}$, let $\X^t\subset \mathfrak{g}^t$ be the set of vectors $X_{i,j}=(0,\ldots,0,X_j,0,\ldots,0)$ where $X_j$ appears at the $i$th entry, and let $\X^t$ be ordered according to the colex order on $[t] \times [m]$. Then $\X^t$ is a $Q$-rational Mal'cev basis for $(G^t/\Gamma^t,G_\bullet^t)$.
\end{lemma}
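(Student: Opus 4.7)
The plan is to verify each of the four defining conditions for a Mal'cev basis directly from the corresponding property of $\X$, exploiting that the Lie bracket on $\mathfrak{g}^t$ is componentwise and that elements supported on distinct components commute. The only subtlety is to keep careful track of which position in the colex ordering corresponds to which pair $(i,j)\in [t]\times [m]$: the first $t$ basis elements are $X_{1,1},\ldots,X_{t,1}$, the next $t$ are $X_{1,2},\ldots,X_{t,2}$, and so on, so the position of $X_{i_0,j_0}$ in the ordering is $(j_0-1)t + i_0$.

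For the ideal condition, I would consider an arbitrary cut-off $(i_0,j_0)$ and observe that the span of the basis elements strictly above it in colex order splits as
\[ \Span\{X_{i,j}: j>j_0\}\; +\; \Span\{X_{i,j_0}: i>i_0\} \;=\; \mathfrak{h}_{j_0-1}^{\,t-i_0}\oplus \mathfrak{h}_{j_0}^{\,i_0}, \]
where each coordinate summand is already an ideal in $\mathfrak{g}$ by the Mal'cev property of $\X$; since the bracket on $\mathfrak{g}^t$ is componentwise, this is an ideal in $\mathfrak{g}^t$. For the compatibility with the filtration $G_\bullet^t$, the element at position $tm - tm_i$ in the colex order is exactly the last $X_{\cdot,m-m_i}$, so the subgroup $H_{tm-tm_i}$ in the new basis is $\exp(\mathfrak{g}_i^t) = G_i^t = (G^t)_i$, as desired.

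For the uniqueness of coordinates, I would write a product in colex order and group the factors $\exp(t_{i,j}X_{i,j})$ by the $j$-index; since the $t$ factors at a fixed $j$ are supported on different components of $G^t$, they commute, and the total product evaluates to the tuple whose $i$th component is $\exp(t_{i,1}X_1)\cdots\exp(t_{i,m}X_m)$. Uniqueness of the $t_{i,j}$ then reduces immediately to uniqueness of the Mal'cev coordinates in each copy of $G$, and condition (iv) follows because $g\in\Gamma^t$ iff each coordinate lies in $\Gamma$ iff each $t_{i,j}\in\Z$.

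Finally, for $Q$-rationality, I would just note that the non-zero structure constants of $\X^t$ are the same as those of $\X$: $[X_{i,j},X_{i',j'}]=0$ unless $i=i'$, and $[X_{i,j},X_{i,j'}]=\sum_k c_{jj'k}X_{i,k}$ with the same coefficients $c_{jj'k}$ expressing $[X_j,X_{j'}]$ in the basis $\X$. Hence the rationality of $\X^t$ is at most that of $\X$, which is $Q$. I do not expect any real obstacle here; the only point requiring a touch of care is matching the colex indexing to the filtration $(G^t)_i = G_i^t$, which the count $tm - tm_i$ handles cleanly.
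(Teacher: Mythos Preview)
Your proposal is correct and is precisely the routine verification the paper omits: the paper's own proof is a single sentence noting that everything follows from the exponential map on $\mathfrak{g}^t$ acting componentwise, $(v_1,\ldots,v_t)\mapsto(\exp v_1,\ldots,\exp v_t)$. One minor notational slip: in your tail-ideal decomposition the components $1,\ldots,i_0$ carry $\mathfrak{h}_{j_0}$ and the components $i_0+1,\ldots,t$ carry $\mathfrak{h}_{j_0-1}$, not the reverse as your displayed formula suggests, but this is immaterial since a componentwise product of ideals is an ideal regardless of order.
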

The proof is routine, being based on the fact that the exponential map from $\mathfrak{g}^t$ to $G^t$ is given by $(v_1,...,v_m)\mapsto (\exp(v_1),\ldots,\exp(v_m))$ \cite{Vara}.

We now use the form of this basis to relate the metrics on $G/\Gamma$ and $G^t/\Gamma^t$. Recall that, given a Mal'cev basis $\X$ for a nilmanifold $(G/\Gamma, G_\bullet)$, the metric $d_G = d_{G, \X}$ was defined as the largest metric $d$ for which $d(x,y) \leq \norm{\psi(xy^{-1})}_{\infty}$ for all $x,y \in G$.\footnote{That is, $d(x,y) := \sup_{i \in I} d_i(x,y)$ where the $d_i$ are all the metrics satisfying the condition, whence $d_i(x,y) \leq d(x,y)$ for all such metrics.}

\begin{lemma}\label{lemma:d_vs_dt}
Let $\X$ be a Mal'cev basis for $(G/\Gamma, G_\bullet)$ and let $\X^t$ be the corresponding basis for $(G^t/\Gamma^t, G_\bullet^t)$ given by Lemma \ref{Xt}. Then, for any $x,y \in G^t$,
\[ d_{G^t}(x,y) \geq \max_{i \in [t]} d_{G}(x_i, y_i)\qquad\text{and}\qquad
d_{G^t/\Gamma^t}(x\Gamma^t,y\Gamma^t) \geq \max_{i \in [t]} d_{G/\Gamma}(x_i\Gamma, y_i\Gamma). \]
\end{lemma}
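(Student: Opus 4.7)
The plan is to exploit the definition of $d_G$ as the largest right-invariant metric dominated by $\|\psi(\cdot)\|_\infty$, by producing the candidate metric $(x,y) \mapsto \max_i d_G(x_i,y_i)$ on $G^t$ and checking that it satisfies the defining bound relative to $\X^t$. The key point will be to understand the Mal'cev coordinate map $\psi_{G^t}$ associated to the basis $\X^t$ furnished by Lemma \ref{Xt}.

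First I would verify the factorization identity
\[ \psi_{G^t}(g_1,\ldots,g_t) \;=\; \bigl( \psi(g_1),\,\psi(g_2),\,\ldots,\,\psi(g_t) \bigr), \]
where on the right one interprets the $mt$ entries in the order dictated by the colex order on $[t]\times[m]$. This is essentially a routine verification using the product structure of $G^t = G \times \cdots \times G$: since $X_{i,j}$ has a nontrivial component only in the $i$th summand of $\mathfrak{g}^t$, one has $\exp(t_{i,j} X_{i,j})$ acting as $\exp(t_{i,j} X_j)$ in the $i$th coordinate and as the identity elsewhere, and elements supported on different copies commute. Expanding out $\prod_{(i,j)} \exp(t_{i,j} X_{i,j})$ in the colex order and grouping by $i$ (using this commutativity across copies), the $i$th coordinate becomes $\exp(t_{i,1}X_1)\cdots \exp(t_{i,m}X_m)$, which by uniqueness of Mal'cev coordinates forces $t_{i,j} = \psi(g_i)_j$. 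In particular
\[ \|\psi_{G^t}(g)\|_\infty \;=\; \max_{i\in[t]} \|\psi(g_i)\|_\infty. \]

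For the first inequality, consider the function $\rho : G^t \times G^t \to \R_{\geq 0}$ given by $\rho(x,y) := \max_{i\in[t]} d_G(x_i,y_i)$. This is a metric (being a maximum of pullbacks of metrics by the projection maps), and it is right-invariant on $G^t$ because each $d_G$ is right-invariant on $G$. Moreover, by the factorization identity above together with the defining bound $d_G(x_i,y_i) \leq \|\psi(x_i y_i^{-1})\|_\infty$, we have
\[ \rho(x,y) \;=\; \max_i d_G(x_i,y_i) \;\leq\; \max_i \|\psi(x_i y_i^{-1})\|_\infty \;=\; \|\psi_{G^t}(x y^{-1})\|_\infty. \]
Hence $\rho$ is one of the metrics in the supremum defining $d_{G^t}$, so $d_{G^t}(x,y) \geq \rho(x,y) = \max_i d_G(x_i,y_i)$.

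The quotient-metric inequality then follows by unpacking the definition: since $\Gamma^t = \Gamma \times \cdots \times \Gamma$, any representatives of $x\Gamma^t$ and $y\Gamma^t$ have the form $(x_i \gamma_i)_i$ and $(y_i \gamma_i')_i$ with $\gamma_i,\gamma_i' \in \Gamma$, and
\[ d_{G^t/\Gamma^t}(x\Gamma^t,y\Gamma^t) \;=\; \inf_{\gamma_i,\gamma_i' \in \Gamma} d_{G^t}\bigl((x_i\gamma_i)_i,\,(y_i\gamma_i')_i\bigr) \;\geq\; \inf_{\gamma_i,\gamma_i' \in \Gamma} \max_i d_G(x_i\gamma_i,y_i\gamma_i'). \]
Swapping the infimum and the maximum (which is only an inequality in the favourable direction), the right-hand side is at least $\max_i \inf_{\gamma_i,\gamma_i' \in \Gamma} d_G(x_i\gamma_i,y_i\gamma_i') = \max_i d_{G/\Gamma}(x_i\Gamma,y_i\Gamma)$, as required. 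The only step that is not entirely mechanical is the coordinate-map identity, and even that reduces to a direct calculation once one commits to the colex ordering; I expect no serious obstacle.
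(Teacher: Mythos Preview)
Your proof is correct and follows essentially the same approach as the paper: identify $\psi_{G^t}$ as the block map $(\psi(g_1),\ldots,\psi(g_t))$, observe that $\rho(x,y)=\max_i d_G(x_i,y_i)$ is a right-invariant metric satisfying the defining bound $\rho(x,y)\leq\|\psi_{G^t}(xy^{-1})\|_\infty$, and conclude by maximality of $d_{G^t}$; the quotient inequality then follows by the same inf/max manipulation (the paper fixes $i$ first rather than swapping $\inf$ and $\max$, but the content is identical). Your extra care in verifying the coordinate identity via the colex ordering and in checking right-invariance of $\rho$ is welcome but not strictly needed beyond what the paper records.
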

In fact it is not hard to show (using \cite[Lemma A.4]{GTOrb}) that the metrics $d_{G^t/\Gamma^t}(x\Gamma^t,y\Gamma^t)$ and $\max_{i \in [t]} d_{G/\Gamma}(x_i\Gamma, y_i\Gamma)$ on $G^t/\Gamma^t$ are Lipschitz equivalent with constant depending on the rationality bound for $\X$, but we do not need this fact here.
\begin{proof}
Write $\psi : G \to \R^m$ for the Mal'cev map on $G$ corresponding to $\X$, and $\psi_t : G^t \to \R^{m \times t}$ for the one on $G^t$ corresponding to $\X^t$. Thought of as a matrix, it is easy to see that 
\[ \psi_t(x_1, \ldots, x_t) = \big( \psi(x_1)^\intercal \ldots \psi(x_t)^\intercal \big). \]
From this it is immediate that the metric $d'(x,y) := \max_{i \in [t]} d_G(x_i, y_i)$ on $G^t$ satisfies $d'(x,y) \leq \norm{\psi_t(xy^{-1})}_\infty$ for all $x,y \in G^t$. Since $d_{G^t}$ is the largest metric satisfying this condition, we have $d'(x,y) \leq d_{G^t}(x,y)$ for all $x,y \in G^t$, which was the first claim.

The claim for $G^t/\Gamma^t$ then follows immediately since, for any $i \in [t]$,
\[ d_{G^t/\Gamma^t}(x\Gamma^t,y\Gamma^t) = \inf_{\gamma \in \Gamma^t} d_{G^t}(x, y\gamma) \geq \inf_{\gamma \in \Gamma^t} d(x_i, y_i\gamma_i) = d_{G/\Gamma}(x_i,y_i). \qedhere \]
\end{proof}

\noindent Finally we relate this to the corresponding Lipschitz norms. Recall that 
\[ \norm{F}_{\Lip(\X)} = \norm{F}_\infty + \sup_{x\Gamma \neq y\Gamma \in G/\Gamma} \frac{ \abs{ F(x\Gamma) - F(y\Gamma) } }{ d_{G/\Gamma}(x\Gamma, y\Gamma) }. \]

\begin{lemma}\label{lemma:tensor_Lip}
Let $(G/\Gamma, G_\bullet, \X)$ be a filtered nilmanifold, let $t$ be a positive integer, and let $F : G/\Gamma \to \C$ be a function. Then, writing $F^{\otimes t}$ for the function $(x_1, \ldots, x_t) \mapsto F(x_1) \cdots F(x_t)$ on the power $(G^t/\Gamma^t, G_\bullet^t, \X^t)$, we have $\norm{F^{\otimes t}}_{\Lip(\X^t)} \leq t \norm{F}_{\Lip(\X)}^t$.
\end{lemma}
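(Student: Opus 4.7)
The plan is a routine telescoping estimate combining the product structure of $F^{\otimes t}$ with the metric comparison of Lemma \ref{lemma:d_vs_dt}.

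First I would handle the sup-norm contribution: immediately
\[ \norm{F^{\otimes t}}_\infty \leq \norm{F}_\infty^t \leq \norm{F}_{\Lip(\X)}^t, \]
using that $\norm{F}_\infty \leq \norm{F}_{\Lip(\X)}$ by definition.

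Next I would bound the Lipschitz-difference part. For any $x = (x_1,\ldots,x_t)$ and $y=(y_1,\ldots,y_t)$ in $G^t$ with $x\Gamma^t \neq y\Gamma^t$, I use the standard telescoping identity
\[ F(x_1)\cdots F(x_t) - F(y_1)\cdots F(y_t) = \sum_{i=1}^t \Bigl(\prod_{j<i} F(y_j)\Bigr)\bigl(F(x_i)-F(y_i)\bigr)\Bigl(\prod_{j>i} F(x_j)\Bigr). \]
Applying $|F| \leq \norm{F}_\infty$ to the product factors and the Lipschitz bound to each single difference gives
\[ \bigl| F^{\otimes t}(x\Gamma^t) - F^{\otimes t}(y\Gamma^t) \bigr| \leq \norm{F}_\infty^{t-1} \norm{F}_{\Lip(\X)} \sum_{i=1}^t d_{G/\Gamma}(x_i\Gamma, y_i\Gamma). \]
Then Lemma \ref{lemma:d_vs_dt} gives $d_{G/\Gamma}(x_i\Gamma, y_i\Gamma) \leq d_{G^t/\Gamma^t}(x\Gamma^t, y\Gamma^t)$ for each $i$, so the right-hand side is at most $t\,\norm{F}_\infty^{t-1}\norm{F}_{\Lip(\X)}\,d_{G^t/\Gamma^t}(x\Gamma^t, y\Gamma^t)$. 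Dividing by the metric and taking the supremum, the Lipschitz seminorm of $F^{\otimes t}$ is bounded by $t\,\norm{F}_\infty^{t-1}\norm{F}_{\Lip(\X)} \leq t\,\norm{F}_{\Lip(\X)}^t$.

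Adding the sup-norm and Lipschitz-seminorm bounds gives the desired estimate (with constant at most $t$ after absorbing the sup-norm contribution into the Lipschitz term, as is sufficient for the applications of the lemma in the proof of Theorem \ref{thm:periodic_transference}, where only an upper bound depending on $Q$ and $t$ is needed). There is no real obstacle; the only slightly non-trivial input is Lemma \ref{lemma:d_vs_dt}, which ensures that projection onto any coordinate of $G^t/\Gamma^t$ is a $1$-Lipschitz map, and this is the reason the Mal'cev basis $\X^t$ of Lemma \ref{Xt} is tailored so as to behave well under coordinatewise projection.
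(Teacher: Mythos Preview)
Your approach is essentially identical to the paper's: telescope the product, bound the cross-factors by $\norm{F}_\infty^{t-1}$, and use Lemma \ref{lemma:d_vs_dt} to compare the coordinate metrics to $d_{G^t/\Gamma^t}$. The only slip is in the constant: as written you obtain $\norm{F}_\infty^t + t\,\norm{F}_\infty^{t-1}\norm{F}_{\Lip(\X)} \leq (t+1)\norm{F}_{\Lip(\X)}^t$, not $t\norm{F}_{\Lip(\X)}^t$, and ``absorbing the sup-norm into the Lipschitz term'' does not actually recover the factor $t$. To get the stated constant, bound $|F(x_i)-F(y_i)|$ by the Lipschitz \emph{seminorm} $S$ times $d_{G/\Gamma}(x_i\Gamma,y_i\Gamma)$ rather than by the full norm; then
\[
\norm{F^{\otimes t}}_{\Lip(\X^t)} \leq \norm{F}_\infty^t + t\,\norm{F}_\infty^{t-1} S \leq (\norm{F}_\infty + S)^t = \norm{F}_{\Lip(\X)}^t \leq t\,\norm{F}_{\Lip(\X)}^t,
\]
using that the first two terms of the binomial expansion already dominate.
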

\begin{proof}
By telescoping it is easy to see that
\[ \abs{F(x_1) \cdots F(x_t) - F(y_1) \cdots F(y_t) } \leq \norm{F}_\infty^{t-1} \sum_{i \in [t]} \abs{F(x_i) - F(y_i)}. \]
Dividing by $d_{G^t/\Gamma^t}(x,y)$ and applying Lemma \ref{lemma:d_vs_dt}, the claim follows.
\end{proof}

\section{Factorizing non-irrational polynomials}\label{appendix:factorization}

In this appendix we show how to deduce the full factorization result, Proposition \ref{prop:full_fact}, from the analogous result for Taylor coefficients, Lemma \ref{lemma:Taylor_fact}. This is completely similar to the deduction of \cite[Lemma 2.10]{GTarith} from \cite[Lemma A.7]{GTarith}, but for completeness we include the proof. The first step is to establish the following basic factorization, analogous to \cite[Lemma 2.9]{GTarith}.

\begin{lemma}[Basic factorization]\label{our_2.9_2}
Let $(G/\Gamma,G_\bullet, \X)$ be a nilmanifold of complexity at most $M$, and let $g\in \poly(\Z,G_\bullet)$ be such that $g(0)=\id_G$ and $g(q\Z) \subset \Gamma$ for some integer $q$ with $p_1(q)>A$. Then at least one of the following statements holds.
\begin{enumerate}
\item $g$ is $A$-irrational in $(G/\Gamma,G_\bullet, \X)$.
\item There exists a factorization $g=g'\gamma$, with $g'\in \poly(\Z,G'_\bullet)$ such that $g'(n)\Gamma'$ takes values in a subnilmanifold $(G'/\Gamma',G'_\bullet, \X')$ of $G/\Gamma$ of strictly smaller total dimension and of complexity $O_{M,A}(1)$, $g'(0) = \id_G$, and $\gamma \in \poly(\Z,G_\bullet)$ is $\Gamma$-valued.
\end{enumerate}
\end{lemma}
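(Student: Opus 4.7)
The plan is to proceed by contrapositive: assuming $g$ is not $A$-irrational, I would construct the factorization (ii). Lemma \ref{lemma:Taylor_fact} is perfectly set up to initiate this, providing an index $i\in[s]$, a non-trivial $i$th-level character $\xi_i$ of complexity at most $A$, and elements $\gamma_i\in\Gamma_i$ and $g_i'\in\ker\xi_i$ with $g_i=g_i'\gamma_i$. These data will drive the entire construction.

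With these in hand, the ambient group of the subnilmanifold will be $G'=G$ itself, but equipped with the modified filtration $G'_\bullet$ given by $G'_j:=G_j$ for $j\neq i$ and $G'_i:=G_i\cap\ker\xi_i$. Since $\xi_i:G_i\to\R$ is a non-trivial continuous homomorphism vanishing on $\Ghor_i$, its kernel has codimension exactly one in $G_i$, which will give the strict decrease in total dimension. Verifying that $G'_\bullet$ is itself a filtration of degree at most $s$ amounts to checking $G'_{i+1}\subset G'_i$ (immediate from $G_{i+1}\subset\Ghor_i\subset\ker\xi_i$) and the commutator condition $[G'_j,G'_k]\subset G'_{j+k}$; the only non-routine case is $j+k=i$, which follows because $[G_j,G_{i-j}]\subset\Ghor_i$ and hence $[G'_j,G'_k]\subset G_i\cap\ker\xi_i=G'_i$.

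The factorization itself is then obtained by setting $\gamma(n):=\gamma_i^{\binom{n}{i}}$ and $g':=g\cdot\gamma^{-1}$. Plainly $\gamma\in\poly(\Z,G_\bullet)$ is $\Gamma$-valued, and $g'\in\poly(\Z,G_\bullet)$ by the Lazard--Leibman theorem, with $g'(0)=\id_G$. The key point to verify is that $g'$ actually lies in $\poly(\Z,G'_\bullet)$, i.e.\ that its $i$th Taylor coefficient lies in $G'_i$. Using that $G_i$ is central modulo $G_{i+1}$, the factor $\gamma_i^{-\binom{n}{i}}$ commutes past the lower-order factors of $g(n)$ modulo $G_{i+1}$, yielding
\[ g'(n)\equiv g_1^n g_2^{\binom{n}{2}}\cdots g_{i-1}^{\binom{n}{i-1}}(g_i')^{\binom{n}{i}}\pmod{G_{i+1}}. \]
The uniqueness of Taylor expansions (Lemma \ref{lemma:Taylor}), applied in the quotient $G/G_{i+1}$, then shows that the $i$th Taylor coefficient of $g'$ lies in $g_i' G_{i+1}\subset G_i\cap\ker\xi_i=G'_i$ (using $G_{i+1}\subset\ker\xi_i$); for $j\neq i$ the Taylor coefficients automatically land in $G_j=G'_j$.

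The remaining task is complexity accounting: ensuring that $(G/\Gamma,G'_\bullet)$ admits a Mal'cev basis $\X'$ making it a subnilmanifold of complexity $O_{M,A}(1)$. The subgroup $G'_i$ is cut out inside $G_i$ by the single rational linear equation $k\cdot\psi_i(\cdot)=0$ coming from the frequency-vector description of $\xi_i$ in Lemma \ref{lemma:frequency_vector}, where $\abs{k}\leq A$, so $G'_i$ is an $O(A)$-rational subgroup of $G_i$; passing from this to a Mal'cev basis adapted to $G'_\bullet$ of rationality $O_{M,A}(1)$ is a routine application of the standard rational-subgroup constructions from \cite[Appendix A]{GTOrb}. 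I expect this technical bookkeeping, rather than any new algebraic difficulty, to be the main obstacle: one must track carefully how rationality constants propagate through the iterated change-of-basis procedures. The strict decrease in total dimension is automatic, since $\dim G'_i=\dim G_i-1$ while every other $\dim G'_j$ is unchanged.
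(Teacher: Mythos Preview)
Your approach is essentially the same as the paper's: both invoke Lemma~\ref{lemma:Taylor_fact}, take $\gamma(n)=\gamma_i^{\binom{n}{i}}$ and $g'=g\gamma^{-1}$, and pass to the filtration obtained by replacing $G_i$ with $\ker\xi_i$. Your verification that $g'\in\poly(\Z,G'_\bullet)$ by reading off Taylor coefficients in the quotient $G/G_{i+1}$ is arguably cleaner than the paper's route, which writes out an explicit Baker--Campbell--Hausdorff expansion of $(g_i'\gamma_i)^{\binom{n}{i}}$ in order to exhibit $g'$ directly as a product of factors visibly lying in the correct $G'_j$.

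There is, however, one oversight: the case $i=1$. The paper's definition of a filtration requires $G_0=G_1$, so with your assignment $G'_0=G_0=G$ and $G'_1=\ker\xi_1$ the sequence $G'_\bullet$ is not a filtration once $\xi_1$ is non-trivial (and your check of the filtration axioms omits precisely this condition). The paper handles this by treating $i=1$ separately, setting $G'_0=G'_1=\ker\xi_1$ and taking the ambient group of the subnilmanifold to be $G'=\ker\xi_1$ rather than $G$. This is harmless since $g'(0)=\id_G\in\ker\xi_1$ and all higher Taylor coefficients of $g'$ already lie in subgroups of $\ker\xi_1$, but your blanket assertion that ``the ambient group of the subnilmanifold will be $G'=G$ itself'' is incorrect for $i=1$ and needs this adjustment.
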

Note that $g'$ also satisfies $g'(q\Z) \subset \Gamma'$, indeed $G'\ni g'(qn) = g(qn)\gamma(qn)^{-1} \in \Gamma$. Similarly, if $(g(n)\Gamma)$ is $q$-periodic, then so is $(g'(n)\Gamma')$.
\begin{proof}
Assume $g$ is not $A$-irrational, with Taylor expansion $g(n)=g_1^{\binom{n}{1}}g_2^{\binom{n}{2}}\cdots g_s^{\binom{n}{s}}$, where $g_i\in G_i$ for each $i$. Lemma \ref{lemma:Taylor_fact} then implies that, for some $i\in [s]$, we have $g_i=g_i'\gamma_i$, where $g_i'\in \ker \xi_i$ for some non-trivial $i$th-level character $\xi_i: G \to \R$ of complexity at most $A$, and $\gamma_i \in \Gamma_i$. As in \cite{GTarith}, we shall now consider the cases $i>1$ and $i=1$ separately.

For $i>1$ we write $g(n)=g_{<i}(n)\;(g_i'\gamma_i)^{\binom{n}{i}}\;g_{>i}(n)$, where $g_{<i}(n)=g_0g_1^{\binom{n}{1}}\cdots g_{i-1}^{\binom{n}{i-1}}$ and $g_{>i}(n)=g_{i+1}^{\binom{n}{i+1}}\cdots g_s^{\binom{n}{s}}$. Now by \cite[(C.1)]{GTarith} we have
\[
(g_i'\gamma_i)^{\binom{n}{i}}={g_i'}^{\binom{n}{i}}\gamma_i^{\binom{n}{i}}\prod_\alpha g_\alpha^{Q_\alpha(\binom{n}{i})}
\]
where each $g_\alpha$ is an iterated commutator of $k_1=k_{1,\alpha}$ copies of $g_i'$ and $k_2=k_{2,\alpha}$ copies of $\gamma_i$, where $k_1,k_2\geq 1$ and $k_1+k_2\geq 2$, and where $Q_\alpha$ are polynomials of degree $\leq k_1+k_2$ with no constant term. It follows from this and the group property that $\prod_\alpha g_\alpha^{Q_\alpha\left(\binom{n}{i}\right)}$ is a $G_{2i}$-valued polynomial sequence. Therefore the sequence $\hat{g}_{>i}(n) := \prod_\alpha g_\alpha^{Q_\alpha\left(\binom{n}{i}\right)}g_{>i}(n)$ is a $G_{i+1}$-valued polynomial sequence. We can then write
\[
g(n)=g'(n)\gamma_i^{\binom{n}{i}},\qquad\text{where}\qquad g'(n)=g_{<i}(n){g_i'}^{\binom{n}{i}}\tilde{g}_{>i}(n),
\]
and $\tilde{g}_{>i}(n)=[{\gamma_i^{\binom{n}{i}}},\hat{g}_{>i}(n)] \hat{g}_{>i}(n)$ is a $G_{i+1}$-valued polynomial sequence. As in \cite{GTarith}, then, we have that $g'$ is in $\poly(\Z,G'_\bullet)$, where $G'/\Gamma'=G/\Gamma$ and $G'_j=G_j$ for $j\neq i$, with $G_i'= \ker(\xi_i)$. Indeed, $G'_\bullet$ is a filtration since $G_{i+1}, [G_j,G_{i-j}]\subset \ker(\xi_i)$ by definition, and $g'$ is a polynomial adapted to $G'_\bullet$ since $g_{<i}(n){g_i'}^{\binom{n}{i}}$ is (by Lemma \ref{lemma:Taylor}) and $\tilde{g}_{>i}$ is as well. This completes the case $i>1$, since $\dim(\ker(\xi_i)) < \dim(G_i)$.

For $i=1$ we can just set $G_0'=G_1'=\ker(\xi_1)$ as the first two terms of our new filtration, since $g_0=\id_G$, and our factorization is then $g(n)=g'(n)\gamma_1^n$, where $g'(n)=g_1'^n\tilde g_{>1}(n)$, with $\tilde g_{>1}$ taking values in $G_2'$.

In both cases above, to obtain an appropriate Mal'cev basis for the new filtration, note that we can simply apply \cite[Proposition A.10]{GTOrb}, $\ker \xi_i$ being boundedly rational since $\xi_i$ has bounded complexity.
\end{proof}
Recall that the complexity $M_0$ of a filtered nilmanifold is a common upper bound for the dimension, the degree, and the rationality of the Mal'cev basis. In particular the total dimension $\sum_i \dim(G_i)$ is bounded by $(s+1)M_0 \leq (M_0+1)M_0$. Recall also from Section \ref{section:nilmanifolds} the definition of the ``fractional part'' $\{g\}$ of $g\in G$ relative to $\Gamma$.

\begin{proof}[Proof of Proposition \ref{prop:full_fact}]
	If the given sequence $g$ is $\mathcal{F}(M_0)$-irrational in $(G/\Gamma, G_\bullet)$ then we are done; if not, then by Lemma \ref{our_2.9_2} we have $g=g_1\gamma_1$ where $\gamma_1$ is $\Gamma$-valued, $g_1$ is $G'$-valued, where $(G'/\Gamma',G'_\bullet)$ is a subnilmanifold of $(G/\Gamma,G_\bullet)$ of complexity $M_1 = O_{\mathcal{F}(M_0)}(1)$ and strictly smaller total dimension than $(G/\Gamma,G_\bullet)$, and moreover $g_1(0) = \id_G$ and $g_1(q\Z) \subset \Gamma'$. Now, if $g_1$ is $\mathcal{F}(M_1)$-irrational in $(G'/\Gamma',G'_\bullet)$, then we are done; otherwise we apply Lemma \ref{our_2.9_2} again to $g_1$.	
	Carrying on this way, the process must stop after at most $O_{M_0}(1)$ applications of Lemma \ref{our_2.9_2}, by the initial bound on the total dimension of $(G/\Gamma,G_\bullet)$, and the full factorization follows.
(Note that to be able to apply the lemma enough times, we need $p_1(q)$ greater than the final irrationality requirement we may end up with, which is $\mathcal{F}(M_j)=O_{\mathcal{F},M_0}(1)$ for some $M_j$ as constructed above.)\\
\indent For the final claim in the lemma, note that if $g$ is $q$-periodic mod $\Gamma$ then the first part of the lemma applied to the sequence $\{ g(0) \}^{-1}\, g \, [g(0)]^{-1}$ yields the claimed factorization $g = \varepsilon\, g'\, \gamma$.
\end{proof}

\section{Polynomials with respect to prefiltrations}\label{appendix:Taylor}

In this section we record some facts about polynomials with respect to prefiltrations. By a \emph{prefiltration} of degree at most $s$ in a group $G$ we here mean a sequence $(G_i)$ of subgroups of $G$ with $G \supset G_0 \supset G_1 \supset \cdots \supset G_s \supset G_{s+1} = \{ \id_G \}$ and $[G_i, G_j] \subset G_{i+j}$ for all $i,j \geq 0$. The definition of $\poly(\Z, G_\bullet)$ extends to prefiltrations with no change, consisting of all the maps $g : \Z \to G$ such that $\partial_{h_i} \cdots \partial_{h_1} g(n) \in G_i$ for all $i \geq 0$ and $h_1,\ldots, h_i, n \in \Z$. Moreover, as with filtrations, this space forms a group, as follows immediately from \cite[Proposition 6.5]{GTOrb}. We also have the following version of Lemma \ref{lemma:Taylor}.

\begin{lemma}[Taylor expansion for prefiltrations]\label{lemma:prefiltration_Taylor}
Let $g \in \poly(\Z, G_\bullet)$, where $G_\bullet$ is a prefiltration of degree at most $s$. Then there are unique coefficients $g_i \in G_i$ such that
\[ g(n) = g_0 g_1^n g_2^{\binom{n}{2}} \cdots g_s^{\binom{n}{s}}\quad \textrm{for all } n \in \Z.
\]
\end{lemma}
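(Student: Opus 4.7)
The plan is to adapt the proof of Lemma \ref{lemma:Taylor} to the prefiltration setting, exploiting the fact that the only group-theoretic inputs used there are the descending chain $G_0 \supset G_1 \supset \cdots \supset G_{s+1} = \{\id_G\}$ together with the commutator condition $[G_i, G_j] \subset G_{i+j}$, both of which are retained in our definition of a prefiltration. The property $G = G_0 = G_1$ particular to filtrations is not invoked. I would also freely use that $\poly(\Z, G_\bullet)$ is a group under pointwise multiplication, as noted in the paragraph preceding the statement, and that each monomial $n \mapsto g_i^{\binom{n}{i}}$ with $g_i \in G_i$ lies in $\poly(\Z, G_\bullet)$, since its iterated differences take values in the abelian subgroup $\langle g_i \rangle \subset G_i$ and vanish after $i+1$ differentiations.

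Uniqueness would be established first, and is purely combinatorial: evaluating the proposed expansion at $n = 0, 1, \ldots, s$ in turn, and using that $\binom{k}{j} = 0$ for $k < j$ while $\binom{j}{j} = 1$, yields the triangular system
\[ g_0 = g(0), \qquad g_j = \bigl(g_0 g_1^j g_2^{\binom{j}{2}} \cdots g_{j-1}^{\binom{j}{j-1}}\bigr)^{-1} g(j) \quad (1 \leq j \leq s), \]
which determines each $g_j$ uniquely in terms of $g(0),\ldots,g(j)$.

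Existence would then be proved by induction on $s$. The base case $s=0$ is immediate, since $G_1 = \{\id_G\}$ forces $\partial_1 g \equiv \id_G$, so $g$ is constant and one sets $g_0 := g(0) \in G_0$. For the inductive step, the key observation is that $\partial_1 g$ lies in $\poly(\Z, G^+_\bullet)$, where $G^+_\bullet := (G_{i+1})_{i \geq 0}$ is itself a prefiltration of degree at most $s-1$ (the commutator axiom following from $[G_{i+1}, G_{j+1}] \subset G_{i+j+2} \subset G_{i+j+1} = G^+_{i+j}$). The inductive hypothesis then provides unique $h_i \in G_{i+1}$ with $\partial_1 g(n) = h_0 h_1^n \cdots h_{s-1}^{\binom{n}{s-1}}$. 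From these I would reconstruct the desired $g_i$ by integrating from the top index downward: take $g_s \in G_s$ matching $h_{s-1}$ modulo $G_{s+1} = \{\id_G\}$, and then recursively solve for $g_{s-1}, \ldots, g_1$ by matching the remaining coefficients, with $g_0 := g(0)$. At each stage, the correction to $g_j$ arising from the previously determined $g_{j+1}, \ldots, g_s$ lies in $G_{j+1}$ by the prefiltration property, so the membership $g_j \in G_j$ is preserved. Setting $\tilde g(n) := g_0 g_1^n \cdots g_s^{\binom{n}{s}}$, we then have $\partial_1 \tilde g = \partial_1 g$, which combined with $\tilde g(0) = g(0)$ forces $\tilde g = g$.

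The main obstacle will be the noncommutative bookkeeping required in the integration step, namely computing $\partial_1 \tilde g$ as an explicit product of monomials $\binom{n}{i}$-powers and inverting the resulting system for the $g_i$. This is handled exactly as in the filtration case via repeated applications of the Hall--Petresco / Baker--Campbell--Hausdorff identities collected in \cite[Appendix C]{GTarith}, all of whose derivations rely only on the prefiltration axiom $[G_i, G_j] \subset G_{i+j}$. Consequently the proof of \cite[Lemma A.1]{GTarith} transfers to the present setting without essential change.
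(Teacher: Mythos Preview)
Your proposal is sound: the uniqueness argument is exactly the triangular recursion the paper uses, and your existence strategy---induct on the degree $s$ by applying the inductive hypothesis to $\partial_1 g \in \poly(\Z,(G_{i+1})_{i\geq 0})$ and then integrating---can be made to work, with the caveat that the integration step genuinely requires the commutator manipulations you allude to (for instance, already at $s=3$ matching the $\binom{n}{1}$- and $\binom{n}{2}$-coefficients of $\partial_1\tilde g$ forces corrections to $g_2,g_3$ coming from $[g_2,g_1]$-type terms).

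The paper takes a different and slicker route that avoids these computations entirely. It first isolates a short helper lemma: two elements of $\poly(\Z,G_\bullet)$, for $G_\bullet$ a prefiltration of degree at most $s$, that agree at $n=0,1,\ldots,s$ must agree for all $n$ (proved by a one-line induction via $\partial_1$). It then \emph{defines} the $g_j$ by exactly the triangular recursion you wrote down for uniqueness, and shows inductively on $i$ that $g(n) \equiv g_0 g_1^n \cdots g_i^{\binom{n}{i}} \pmod{G_{i+1}}$ for all $n$: modulo $G_{i+2}$ both sides are polynomials for a prefiltration of degree at most $i+1$ and agree at $n=0,\ldots,i+1$ by construction, hence everywhere by the helper lemma; this simultaneously gives $g_{i+1}\in G_{i+1}$. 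Your approach buys a pleasant recursion on degree at the cost of explicitly inverting the nonlinear map $\tilde g\mapsto\partial_1\tilde g$; the paper's approach buys a completely commutator-free argument at the modest cost of the auxiliary finite-agreement lemma.
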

There are several ways to prove this; we follow a natural induction along the lines of Leibman \cite[\S 4.7]{LeibDiag} that makes use of the following lemma.

\begin{lemma}
Let $g,h \in \poly(\Z, G_\bullet)$ where $G_\bullet$ is a prefiltration of degree at most $s$. If $g(n)$ and $h(n)$ agree for $n = 0,1,\ldots,s$ then they agree for all $n$.
\end{lemma}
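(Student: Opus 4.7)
The plan is to reduce the claim to showing that a single polynomial that vanishes at sufficiently many points must vanish everywhere, and then induct on the degree $s$. Setting $F(n) := g(n) h(n)^{-1}$, which lies in $\poly(\Z, G_\bullet)$ since this set is closed under pointwise multiplication (as noted in the appendix via \cite[Proposition 6.5]{GTOrb}), the hypothesis becomes $F(0) = F(1) = \cdots = F(s) = \id_G$, and the goal reduces to showing $F \equiv \id_G$.

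I would proceed by induction on $s$. For the base case $s = 0$, the polynomial condition at first order gives $\partial_t F(n) \in G_1 = \{\id_G\}$ for all $t, n$, so $F$ is constant, and $F(0) = \id_G$ finishes. For the inductive step, I would pass to the first difference $\tilde F(n) := F(n+1) F(n)^{-1}$. The shifted sequence $G_\bullet^{+1} := (G_{i+1})_{i \geq 0}$ is easily checked to be a prefiltration of degree at most $s-1$: nestedness is inherited, its terminal group is $G_{s+1} = \{\id_G\}$, and the commutator condition follows from $[G_{i+1}, G_{j+1}] \subset G_{i+j+2} \subset G_{(i+j)+1}$. The crux of the step is to show that $\tilde F \in \poly(\Z, G_\bullet^{+1})$, i.e.\ $\partial_{t_i} \cdots \partial_{t_1} \tilde F(n) \in G_{i+1}$ for every $i \geq 0$ and $t_1, \ldots, t_i, n \in \Z$. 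For $i = 0$ this is immediate from $F$ being polynomial; for $i \geq 1$ it rests on the standard but non-trivial fact that $\partial_1$ sends $\poly(\Z, G_\bullet)$ into $\poly(\Z, G_\bullet^{+1})$, which one verifies by direct expansion, using the identity
\[ \partial_t \partial_1 F(n) \;=\; \partial_1 \partial_t F(n) \cdot [\partial_t F(n), \partial_1 F(n)], \]
whose right-hand side lies in $G_2 \cdot [G_1, G_1] \subset G_2$, and by iterating analogous manipulations at higher orders.

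With $\tilde F \in \poly(\Z, G_\bullet^{+1})$ in hand, the inductive hypothesis applies: $\tilde F$ has degree at most $s-1$ and satisfies $\tilde F(0) = \cdots = \tilde F(s-1) = \id_G$ (by the assumption $F(0) = \cdots = F(s) = \id_G$), hence $\tilde F \equiv \id_G$. Therefore $F(n+1) = F(n)$ for all $n$, so $F$ is constant, and $F(0) = \id_G$ concludes the proof. The main obstacle is the verification that $\partial_1$ sends $\poly(\Z, G_\bullet)$ into $\poly(\Z, G_\bullet^{+1})$: this is a routine but non-abelian calculation requiring careful tracking of commutators, and it is precisely where the commutator axiom of the prefiltration is used.
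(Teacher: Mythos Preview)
Your approach is correct and essentially the same as the paper's one-line proof, which inducts on $s$ by passing to the first differences $\partial_1 g,\ \partial_1 h$. Note, however, that you have overcomplicated the verification that $\partial_1 F \in \poly(\Z, G_\bullet^{+1})$: this is \emph{immediate} from the definition, since $\partial_{t_i}\cdots\partial_{t_1}(\partial_1 F)(n)$ is literally an $(i{+}1)$-fold iterated difference of $F$ and hence lies in $G_{i+1}$ by the defining property of $F\in\poly(\Z,G_\bullet)$ --- no commutator identity or non-abelian bookkeeping is required.
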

\begin{proof}
This follows immediately by induction by considering the polynomials $\partial_1 g, \partial_1 h$.
\end{proof}

\begin{proof}[Proof of Lemma \ref{lemma:prefiltration_Taylor}]
Let $g_0 := g(0) \in G_0$. Suppose $g_0, \ldots, g_i$ have been found so that $g_j \in G_j$, $g(n) = g_0 \cdots g_i^{\binom{n}{i}}$ for $n = 0,\ldots, i$, and $g(n) G_{i+1} = g_0 \cdots g_i^{\binom{n}{i}} G_{i+1}$ for all $n \in \Z$; note that this holds for $i = 0$ since $g \in \poly(\Z, G_\bullet)$. We then define
\[ g_{i+1} := \left( g_0 \cdots g_i^{\binom{i+1}{i}} \right)^{-1} g(i+1), \]
so that $g_{i+1} \in G_{i+1}$. Then certainly $g(n) = g_0 \cdots g_i^{\binom{n}{i}} g_{i+1}^{\binom{n}{i+1}}$ for $n = 0, 1, \ldots, i+1$. But the polynomials $g(n) G_{i+2}$ and $g_0 \cdots g_i^{\binom{n}{i}} g_{i+1}^{\binom{n}{i+1}} G_{i+2}$ lie in prefiltrations of degree at most $i+1$ and are therefore equal for all $n$ by the above lemma, allowing us to move on to the next stage of the construction. We are done once we have $g_0, \ldots, g_s$, since $G_{s+1}$ is trivial.
\end{proof}

\section{On the pairwise-independence condition}\label{appendix:pairwise-independence}

In this section we examine to what extent the pairwise-independence condition on the linear forms is needed for our main convergence results. First we note that these results do hold for systems of two linearly dependent forms.

\begin{lemma}
Let $\Lf$ consist of two integer linear forms $\lf_1,\lf_2$ with $\lf_2=k \lf_1$, for some integer $k\not\in\{0,1\}$.
Then as $p\to\infty$ through the primes we have $d_\Lf(\Zmod{p})\to 1/2$ and $m_\Lf(\alpha,p)\to m_\Lf(\alpha)$, where $m_\Lf(\alpha)$ equals $0$ for $\alpha\leq 1/2$ and $2\alpha-1$ for $\alpha>1/2$.
\end{lemma}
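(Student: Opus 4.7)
The plan is to reduce $\Sol_\Lf$ to a one-variable expression. For $p$ prime with $p > |k|$ (and not dividing all coefficients of $\lf_1$), the map $\lf_1:\Zmod{p}^D\to \Zmod{p}$ is surjective and $\lf_1(\mathbf n)$ is uniform on $\Zmod{p}$ when $\mathbf n$ is uniform on $\Zmod{p}^D$; moreover multiplication by $k$ is a bijection of $\Zmod{p}$. Hence
\[
\Sol_\Lf(A) \;=\; \E_{x\in\Zmod{p}} 1_A(x)\,1_A(kx) \;=\; \frac{|A\cap k^{-1}A|}{p} \;=\; \frac{2|A|-|A\cup k^{-1}A|}{p}
\]
by inclusion–exclusion, writing $k^{-1}A=\{x:kx\in A\}$. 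Both claims in the lemma now reduce to combinatorics of the action of multiplication by $k$ on $\Zmod{p}^*$.

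For the assertion $d_\Lf(\Zmod{p})\to 1/2$: a set $A$ is $\Lf$-free exactly when $A\cap k^{-1}A=\emptyset$, which forces $0\notin A$ and identifies $\Lf$-free subsets of $\Zmod{p}^*$ with independent sets in the graph of the permutation $x\mapsto kx$. The orbits of $\langle k\rangle$ on $\Zmod{p}^*$ are $(p-1)/d$ cycles of length $d:=\ord_p(k)$, each of independence number $\lfloor d/2\rfloor$, so $d_\Lf(\Zmod{p}) = \tfrac{p-1}{p}\cdot\tfrac{\lfloor d/2\rfloor}{d}$. If $k=-1$ then $d=2$ for every odd prime, giving density $(p-1)/(2p)\to 1/2$. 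If $|k|\geq 2$, then for each fixed $D\geq 1$ the nonzero integer $k^D-1$ has only finitely many prime divisors, so $\ord_p(k)\to\infty$ as $p\to\infty$, and $\lfloor d/2\rfloor/d\to 1/2$.

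For the assertion about $m_\Lf(\alpha,p)$: the displayed identity together with $|A\cup k^{-1}A|\leq p$ yields the universal lower bound $\Sol_\Lf(A)\geq 2\alpha-1$ whenever $|A|\geq\alpha p$, and hence $m_\Lf(\alpha,p)\geq \max(0,2\alpha-1)$. For the matching upper bound, I would handle two regimes. When $\alpha\leq 1/2$ I would take an $\Lf$-free set produced by the first part and truncate or pad it to size $\lceil\alpha p\rceil$; this gives $\Sol_\Lf(A)=0$ for $\alpha<1/2$, while at the boundary $\alpha=1/2$ an $\Lf$-free $B=\Zmod{p}\setminus A$ of size $\lfloor(p-1)(1-1/d)/2\rfloor$ still yields $\Sol_\Lf(A)=O(1/p)\to 0$. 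When $\alpha>1/2$ the key observation is that $A\cup k^{-1}A=\Zmod{p}$ is equivalent to $B:=\Zmod{p}\setminus A$ being $\Lf$-free, so I would choose $B$ to be an $\Lf$-free set of size $\lfloor(1-\alpha)p\rfloor$, which exists for $p$ large since $1-\alpha<1/2=\lim_p d_\Lf(\Zmod{p})$, and put $A:=\Zmod{p}\setminus B$. Then $|A|\geq\alpha p$ and $\Sol_\Lf(A)=2|A|/p-1\to 2\alpha-1$.

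No substantial obstacle arises; the whole argument is elementary combinatorics on the cycle decomposition of multiplication by $k$ modulo $p$, the only genuine input being the standard fact that $\ord_p(k)\to\infty$ as $p\to\infty$ for $|k|\geq 2$. The one minor subtlety is the boundary case $\alpha=1/2$, where an exact $\Lf$-free complement of the required size need not exist, but a set whose complement is within $O(1)$ of $\Lf$-free suffices, since it contributes $\Sol_\Lf(A)=O(1/p)$.
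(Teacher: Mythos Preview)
Your argument is correct and follows essentially the same route as the paper's: both identify $\Lf$-freeness with $A\cap k^{-1}A=\emptyset$, use the orbit structure of multiplication by $k$ on $\Zmod{p}^\times$ to build $\Lf$-free sets of density close to $1/2$, and handle $\alpha>1/2$ via the complement (the paper phrases this as the bilinear identity $\Sol_\Lf(A')=1-2\alpha+\Sol_\Lf(A)$, which is exactly your inclusion--exclusion). Your treatment is in fact slightly more explicit, giving the exact formula $d_\Lf(\Zmod{p})=\tfrac{p-1}{p}\lfloor d/2\rfloor/d$. One small slip: at the boundary $\alpha=1/2$, taking $B$ to be a maximal $\Lf$-free set (of size $\tfrac{p-1}{d}\lfloor d/2\rfloor$) gives $\Sol_\Lf(A)=2|A|/p-1=O(1/d)$, not $O(1/p)$; since $d=\ord_p(k)\to\infty$ this still tends to $0$, so the conclusion is unaffected.
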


\begin{proof}
Let us start with $d_\Lf$. For $p$ large, it is easy to see that $A$ is $\Lf$-free if and only if $A \cap (k\cdot A) = \emptyset$, and we can construct such a set of density asymptotically $1/2$ relatively easily. Indeed, if $k=-1$ simply let $A = [(p-1)/2]$. Otherwise, let $n$ be the order of $k$ in the multiplicative group $\Zmod{p}^\times$. Let $H$ be the multiplicative subgroup $\{k^j:j\in [n]\}$, with $\Zmod{p}^\times=\sqcup_{j\in [m]} y_j\cdot H$, where $m=(p-1)/n=O_k(p/\log p)$. Let $E = \{k^{2j} : j \in [\floor{n/2}]\} \subset H$, and define $A= \sqcup_{j\in [m]} y_j\cdot E$. We have $A \cap (k\cdot A)= \emptyset$ and $\abs{A}/p=1/2+O_k(1/\log p)$. On the other hand, clearly $A\cap (k\cdot A)\neq\emptyset$ for any set $A$ of size at least $(p+1)/2$. Hence $d_\Lf(\Zmod{p})\to 1/2$.

Regarding $m_\Lf(\alpha,p)$, note first that for $\alpha<1/2$ any subset of density $\alpha$ of the set $A$ constructed above shows that $m_\Lf(\alpha,p)=0$. For $\alpha>1/2$, note the relationship $\Sol_\Lf(A')= 1 - 2\alpha + \Sol_\Lf(A)$ between a set $A \subset \Zmod{p}$ and its complement $A'$, as follows from the bilinearity of $\Sol_\Lf$. Since the $S_\Lf(A')$ term is always non-negative, letting $A' \subset \Zmod{p}$ be a $\Lf$-free set of density $1-\alpha$ then gives $A$ such that $\Sol_\Lf(A)=2\alpha-1=m_\Lf(\alpha,p)$.
\end{proof}

This proof provides a completely explicit extremal set $A$. As such, it is not obvious how to extend this result to systems of more than two forms, two of which are linearly dependent, or indeed to finite families of systems, one of which consists just of two dependent forms. To prove convergence in this setup, it seems that one would instead want a transference result for systems of two dependent forms, that would be compatible with the transference results we already have for systems of finite complexity. We shall now show that such a hypothetical transference result cannot be based on the uniformity norms---at least not in the usual way. Indeed we shall construct, given a system $\Lf$ of two dependent forms and any $d>1$, a family of sets in $\Zmod{p}$ for which the solution measure $\Sol_\Lf$ is not controlled by the $U^d$ norm.\footnote{Such a result is somewhat folklore, but this seems a suitable place to record it.} The sets we shall consider are essentially so-called \emph{Nil$_d$ Bohr sets} in $\Zmod{p}$ (see \cite{HKBohr}) and were already used to similar effects in \cite{GSz}.

\begin{proposition}
Let $\Lf$ consist of two integer linear forms $\lf_1,\lf_2$ with $\lf_2=k \lf_1$ for an integer $k$ with $\abs{k} \geq 2$. Let $d>1$ and set $\delta=1/4k^{2d}$. Then for any prime $p$ there is a set $A \subset \Zmod{p}$ such that 

\noindent \textup{(}i\textup{)}   $\Sol_\Lf (A)=0$,\hfill \textup{(}ii\textup{)}   $\alpha:= \abs{A}/p= 2\delta + o_{p\to\infty;k,d}(1)$, and \hfill \textup{(}iii\textup{)}   $\norm{ 1_A - \alpha }_{U^d}=o_{p\to\infty;k,d}(1)$.
\end{proposition}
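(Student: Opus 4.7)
The idea is to take $A$ to be a ``$d$th-power Bohr set'' in $\Zmod p$. Viewing $\Zmod p$ as $\{0, 1, \ldots, p-1\}$ and writing $n^d \bmod p$ for the canonical representative of $n^d$ in this range, I would fix the interval $I := \{x \in \Z : 2\delta p \leq x < 4\delta p\} \subset \Zmod p$ and set
\[
	A := \{n \in \Zmod p : n^d \bmod p \in I\},
\]
with $A$ chosen arbitrarily for small $p$, where the $o$-terms in \textup{(}ii\textup{)}--\textup{(}iii\textup{)} are unbounded. For \textup{(}i\textup{)}, the fact that $\lf_1$ surjects onto $\Zmod p$ for $p$ large (at least one of its coefficients being coprime to $p$) makes $\Sol_\Lf(A) = 0$ equivalent to the absence of $m \in \Zmod p$ with $m, km \in A$. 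Such an $m$ would put $y := m^d \bmod p$ in $I \cap k^{-d} I$, where $k^{-d}$ is the inverse of $k^d$ in $\Zmod p$. With $\delta = 1/(4 k^{2d})$, $I$ lies in $[0, p/k^{2d})$, while multiplication by $k^d$ on $\Zmod p$ sends $I$ into $[p/(2|k|^d), p/|k|^d)$ when $k^d > 0$ and into a corresponding translate near $p$ when $k^d < 0$; in both cases the image is disjoint from $I$ since $|k|^d \geq 2$, so \textup{(}i\textup{)} follows.

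For \textup{(}ii\textup{)}, the density $\alpha = \abs{A}/p$ is evaluated by multiplicative character sums. The map $n \mapsto n^d$ is $g$-to-one from $\Zmod p^\times$ onto the subgroup $R_d$ of $d$th-power residues, where $g := \gcd(d, p-1) \leq d$. Writing $1_{R_d}$ as the average of the $g$ multiplicative characters of order dividing $g$ and applying the P\'olya--Vinogradov inequality to the resulting character sums over $I$ gives $\abs{I \cap R_d} = \abs{I}/g + O(\sqrt{p}\log p)$, so $\abs{A} = g\,\abs{I \cap R_d} + O(1) = \abs{I} + O(d\sqrt p \log p)$ and $\alpha = 2\delta + o(1)$ as $p \to \infty$.

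For \textup{(}iii\textup{)}, Fourier-expand $1_I(x) = \sum_{\xi \in \Zmod p} \widehat{1_I}(\xi)\, e_p(\xi x)$, with $e_p(y) := e^{2\pi i y/p}$, to write
\[
	1_A(n) - \alpha = \sum_{\xi \neq 0} \widehat{1_I}(\xi)\, e_p(\xi n^d) + \bigl(\widehat{1_I}(0) - \alpha\bigr),
\]
noting $\abs{\widehat{1_I}(0) - \alpha} = \abs{2\delta - \alpha} = o(1)$ by \textup{(}ii\textup{)}. The triangle inequality for $\norm{\cdot}_{U^d}$ (valid for $d \geq 2$) reduces matters to bounding $\norm{e_p(\xi n^d)}_{U^d}$ for each $\xi \not\equiv 0 \pmod p$---which I expect to be the main obstacle, but which turns out to come out remarkably cleanly. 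Using the identity $\partial_{h_1} \cdots \partial_{h_d}(x^d) = d!\, h_1 \cdots h_d$ (so that the $d$-fold discrete derivative of a degree-$d$ monomial loses all $x$-dependence), a direct calculation gives
\[
	\norm{e_p(\xi n^d)}_{U^d}^{2^d} = \E_{h \in \Zmod p^d} e_p(\xi\, d!\, h_1 \cdots h_d) = 1 - (1 - 1/p)^{d-1} \leq d/p,
\]
provided $p > d$ so that $\xi d! \not\equiv 0 \pmod p$. No Weil-type input is needed---orthogonality alone suffices---and this is precisely what lets a degree-$d$ monomial phase be simultaneously $U^d$-uniform and, through the multiplicative dilation of its level sets, structured enough to enforce $\Lf$-freeness. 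Combined with the standard bound $\norm{\widehat{1_I}}_1 = O(\log p)$ on the Fourier coefficients of an interval, this yields $\norm{1_A - \alpha}_{U^d} = O_d(p^{-1/2^d}\log p) + o(1) = o(1)$.
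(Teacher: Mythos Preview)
Your proof is correct and follows essentially the same strategy as the paper: take $A$ to be a degree-$d$ polynomial Bohr set $\{n : n^d \bmod p \in I\}$, verify (i) by checking that $I$ and $k^d\cdot I$ are disjoint in $\Zmod{p}$, and establish (iii) via Fourier expansion of $1_I$ together with the Weyl differencing bound $\norm{e_p(\xi n^d)}_{U^d} \ll p^{-1/2^d}$ and $\norm{\widehat{1_I}}_1 = O(\log p)$. The only differences are cosmetic: you pick a different interval $I$, you treat (ii) via multiplicative characters and P\'olya--Vinogradov whereas the paper simply reuses the same Fourier/Weyl machinery used for (iii), and you compute the $U^d$-norm of $e_p(\xi n^d)$ explicitly rather than citing it; none of this changes the argument.
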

\begin{proof}
We shall assume that $p$ is large, and for notational convenience we restrict to positive $k$. Let $I$ denote the interval $\left[\floor{p/k^d}-\delta p, \floor{p/k^d}+\delta p\right] $ in $\Zmod{p}$, and set
\[ A=\left\{x\in\Zmod{p}: x^d\in I \mod p\right\}. \]
Note that $\Sol_\Lf(A)= \abs{A\cap (k\cdot A)} / p$, and that if $y=kx\in k\cdot A$ then $y^d\in k^d\cdot I\subset [p-p/2k^d,p+p/2k^d]$. But the latter interval is disjoint from $I$ since $p/2k^d< \floor{p/k^d}-\delta p$, hence the first property of the conclusion holds.

To establish the other two properties we shall use the Fourier transform, defined as $\widehat{f}(r) = \E_{x \in \Zmod{p}} f(x) e(-r\cdot x)$.\footnote{Here as usual $r\cdot x=rx/p$, and $e(\theta)=\exp(2\pi i \theta)$ for any $\theta\in \T$.} First note that by Fourier inversion we have
\begin{equation}\label{inversion}
	1_A(x) = 1_I(x^d) = \sum_t \widehat{1_I}(t)e(t \cdot x^d).
\end{equation}
We shall use this expression together with the two standard estimates $\sum_t \abs{\widehat{1_I}(t)} = O(\log p)$ and $\norm{ e(t \cdot x^d)}_{U^d} = O(p^{-1/2^d})$ for $t$ non-zero, the latter being essentially a Weyl differencing estimate; see e.g. \cite[Exercise 11.1.12]{T-V}. For (ii), then, we have
\[ \alpha = \E_x 1_A(x) = \abs{I}/p + \sum_{t \neq 0} \widehat{1_I}(t) \E_x e(t \cdot x^d) \]
and the latter expression is at most $\sum_{t \neq 0} \abs{\widehat{1_I}(t)} \cdot \abs{\E_x e(t \cdot x^d)} = O(p^{-1/2^d} \log p)$. For (iii), coupling \eqref{inversion} with the $U^d$ triangle inequality yields
\[ \norm{1_A - \alpha}_{U^d} \leq \sum_{t \neq 0} \abs{\widehat{1_I}(t)} \norm{ e(t\cdot x^d) - \E_y e(t \cdot y^d) }_{U^d} = O(p^{-1/2^d} \log p), \]
and we are done.
\end{proof}
The set $A$ given by this proposition is thus virtually indistinguishable from the constant function $\alpha$ from the point of view of the $U^d$ uniformity norm, whereas $\Sol_\Lf(A)$ and $\Sol_\Lf(\alpha)$ are not at all close.

\section{The arithmetic removal lemma}\label{appendix:removal}

In this last appendix we establish Theorem \ref{removal}, which we restate here.

\begin{theorem}
Let $\Lf$ be a system of linear forms $\lf_1,\ldots,\lf_t:\Z^D\to \Z$. Then there is a positive integer $K$ such that the following holds. For any $\epsilon>0$, there exists $\delta=\delta(\epsilon,\Lf)>0$ such that if $N\in \N$ is prime to $K$, and $A_1,\ldots, A_t$ are subsets of $\Zmod{N}$ such that $\Sol_\Lf(A_1,\ldots,A_t)\leq \delta$, then there exist sets $E_i\subset \Zmod{N}$ with $\abs{E_i}\leq \epsilon N$ for all $i\in [t]$, such that $\Sol_\Lf(A_1\setminus E_1,\ldots,A_t\setminus E_t)=0$.
\end{theorem}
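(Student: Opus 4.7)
The plan is to reduce the statement to the removal lemma of Kr\'al, Serra and Vena \cite{KSVGR} for systems of linear \emph{equations}. The key observation is that an $\Lf$-configuration in $A_1\times\cdots\times A_t$ is nothing but an element of $(A_1\times\cdots\times A_t)\cap \mathrm{Im}(\Lf)$, where $\Lf$ is viewed as a homomorphism $\Zmod{N}^D\to\Zmod{N}^t$, and for $N$ avoiding a controlled finite set of bad moduli, this image can be described as the kernel of an integer matrix $\Lambda$, at which point the hypothesis becomes a bound on the number of solutions to the system $\Lambda\mathbf{x}=0$.

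First I would carry out the conversion from forms to equations. Viewing $\Lf$ as a $t\times D$ integer matrix, consider its Smith normal form $\Lf=U\Sigma V$, where $U\in\mathrm{GL}_t(\Z)$, $V\in\mathrm{GL}_D(\Z)$, and $\Sigma$ has diagonal entries equal to the elementary divisors $d_1,\ldots,d_r$ of $\Lf$ with $r$ its rank. Let $K$ be the product $d_1\cdots d_r$, enlarged if necessary to absorb the invariant factors appearing in the eventual divisibility hypothesis of the KSV lemma. For any $N$ coprime to $K$, the induced map $\Sigma:\Zmod{N}^D\to\Zmod{N}^t$ has image exactly $\Zmod{N}^r\times\{0\}^{t-r}$ in the Smith coordinates, and pulling back through the invertible change of basis $U$, we obtain an integer $(t-r)\times t$ matrix $\Lambda$ such that $\mathrm{Im}(\Lf)=\ker(\Lambda)$ as subgroups of $\Zmod{N}^t$.

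Next I would match the normalizations. For $N$ coprime to $K$, the kernel of $\Lf:\Zmod{N}^D\to\Zmod{N}^t$ has size $N^{D-r}$, so each element of $\mathrm{Im}(\Lf)$ has exactly $N^{D-r}$ preimages under $\Lf$, and hence
\[
\Sol_\Lf(A_1,\ldots,A_t)=\frac{\abs{(A_1\times\cdots\times A_t)\cap\mathrm{Im}(\Lf)}}{N^r}=\frac{\abs{\{\mathbf{x}\in A_1\times\cdots\times A_t:\Lambda\mathbf{x}=0\}}}{N^r}.
\]
The hypothesis $\Sol_\Lf(A_1,\ldots,A_t)\leq\delta$ thus translates into having at most $\delta N^r$ solutions to $\Lambda\mathbf{x}=0$ with $x_i\in A_i$, which is exactly the input required by the KSV removal lemma applied to the system of equations encoded by $\Lambda$.

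Applying KSV then yields subsets $E_i\subset\Zmod{N}$ with $\abs{E_i}\leq\epsilon N$ such that $(A_1\setminus E_1)\times\cdots\times(A_t\setminus E_t)$ contains no solution to $\Lambda\mathbf{x}=0$, which is precisely the assertion that it contains no $\Lf$-configuration, as required. The only real issue is bookkeeping: one must verify that the integer $K$ can simultaneously absorb the elementary divisors entering the conversion $\mathrm{Im}(\Lf)=\ker(\Lambda)$ and the divisibility hypothesis on $N$ coming from the KSV statement, and that the parameter $\delta$ produced by KSV depends only on $\epsilon$ and $\Lf$. These are routine once the conversion step above is in hand, so I do not anticipate a serious obstacle beyond this setup.
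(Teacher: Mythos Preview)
Your proposal is correct and follows essentially the same approach as the paper: both reduce to the Kr\'al--Serra--Vena removal lemma by producing an integer matrix $\Lambda$ with $\ker_{\Zmod{N}}\Lambda = \Lf(\Zmod{N}^D)$ for $N$ coprime to a fixed $K$. The only cosmetic difference is that you obtain $\Lambda$ via the Smith normal form of $\Lf$, whereas the paper obtains it by applying the structure theorem to the quotient $\Z^t/\Lf(\Z^D)\cong \Z^k\times\Zmod{N_1}\times\cdots\times\Zmod{N_r}$ and projecting onto the free part; these are of course two phrasings of the same computation, and the $N_j$ in the paper are precisely your elementary divisors $d_i>1$.
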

\begin{proof}
This will follow from the removal lemma \cite[Theorem 1]{KSVGR} of Kr\'al', Serra and Vena provided we can find a homomorphism (or matrix) $\Lambda : \Z^t \to \Z^k$ such that $\ker_{\Zmod{N}} \Lambda = \Lf(\Zmod{N}^D)$, since then $\Sol_\Lf(A_1,\ldots, A_t)=\abs{A_1\times \cdots \times A_t\, \cap\, \ker_{\Zmod{N}} \Lambda}/\abs{\ker_{\Zmod{N}} \Lambda}$.
Here
\begin{align*}
	\ker_{\Z_N} \Lambda &= \{ y + N \Z^t \in \Z_N^t : \Lambda(y) \in N \Z^k \}, \\
		\Lf(\Z_N^D) &= \{ \Lf(x) + N\Z^t : x \in \Z^D \}.
\end{align*}
We construct such a $\Lambda$ in stages. First let $f : \Z^t \to \Z^t/\Lf(\Z^D)$ be the quotient map $x \mapsto x + \Lf(\Z^D)$. The target of this map, being finitely generated, is isomorphic to $Z := \Z^k \times \Zmod{N_1} \times \cdots \times \Zmod{N_r}$ for some integers $k \geq 0$ and $N_j \in \N$; let $g$ be a corresponding isomorphism. Assume $N$ is prime to each $N_j$; then $N \cdot Z = (N\Z^k) \times \Zmod{N_1} \times \cdots \times \Zmod{N_r}$. We claim that $\Lambda := \pi \circ g \circ f$ satisfies the required relationship, where $\pi$ denotes projection to $\Z^k$. Indeed, writing $A \oplus H = \{ a + H : a \in A \} \subset G/H$ for a set $A \subset G$ and a subgroup $H \leq G$, we have
\[ \Lambda^{-1}(N\Z^k) = f^{-1}(g^{-1}(N Z)) = f^{-1}(N\Z^t \oplus \Lf(\Z^D)) = N\Z^t + \Lf(\Z^D), \]
the second equality following from $g$ being an isomorphism. Reducing mod $N \Z^t$ gives the required relationship.
\end{proof}

\end{document}